\newtheorem{thm}{Theorem}[section]
\newtheorem{cor}[thm]{Corollary}
\newtheorem{lem}[thm]{Lemma}
\newtheorem{klem}[thm]{Key Lemma}
\newtheorem*{klem*}{Key Lemma}
\newtheorem{prop}[thm]{Proposition}
\theoremstyle{definition}
\theoremstyle{remark}
\numberwithin{equation}{section}
\newcommand{\fact}{f(G)}
\newcommand{\von}{v(G)}
\newcommand{\pmp}{p{$.$}m{$.$}p{$.$}}
\newcommand{\N}{\mathbb{N}}
\newcommand{\Z}{\mathbb{Z}}
\newcommand{\F}{\mathrm{F}}
\newcommand{\acts}{\curvearrowright}
\newcommand{\Sym}{\mathrm{Sym}}
\newcommand{\Aff}{\mathrm{Aff}}
\newcommand{\id}{\mathrm{id}}
\newcommand{\p}{\mathcal{P}}
\newcommand{\sH}{\mathrm{H}}
\renewcommand{\:}{\,:\,}
\begin{document}

\title{Every action of a non-amenable group is the factor of a small action}

\author{Brandon Seward}
\address{Department of Mathematics, University of Michigan, 530 Church Street, Ann Arbor, MI 48109, U.S.A.}
\email{b.m.seward@gmail.com}
\keywords{non-amenable, factor, generating partition, entropy, Polish action}
\subjclass{37A15; 37A20, 37A35, 37B10, 37B40, 03E15}
%\subjclass{37A15} %General groups of measure-preserving transformations
%\subjclass{37A20} %Orbit equivalence, cocycles, ergodic equivalence relations
%\subjclass{37A35} %Entropy and other invariants, isomorphism, classification
%\subjclass{37B10} %Symbolic dynamics
%\subjclass{37B40} %Topological entropy
%\subjclass{03E15} %Descriptive set theory
\thanks{This research was supported by the National Science Foundation Graduate Student Research Fellowship under Grant No. DGE 0718128. The author would like to thank Lewis Bowen, Damien Gaboriau, Alexander Kechris, and Ralf Spatzier for helpful conversations. The author also thanks the referee for many helpful suggestions.}

\begin{abstract}
%It is a well known consequence of entropy that if $G$ is a countable amenable group and $n < m$, then there is no compact invariant set $Y \subseteq n^G$ which continuously and equivariantly factors onto the shift $m^G$. Similarly, there is no invariant Borel probability measure $\nu$ on $n^G$ such that $(n^G, \nu)$ measurably and equivariantly factors onto the Bernoulli shift $(m^G, \mu^G)$ where $\mu$ is the normalized counting measure on $\{0, 1, \ldots, m-1\}$. We prove that these properties completely fail whenver $G$ is non-amenable. Specifically, there are integers $n_{meas}$ and $n_{top}$ which depend only on the non-amenable group $G$ and satisfy the following. For every continuous action of $G$ on a compact metrizable space $X$, there is an invariant compact set $Y \subseteq n_{top}^G$ which continuously and equivariantly surjects onto $X$. Similarly, for every probability-measure-preserving action $G \acts (X, \mu)$, there is an invariant Borel probability measure $\nu$ on $n_{meas}^G$ such that $(n_{meas}^G, \nu)$ measurably and equivariantly factors onto $(X, \mu)$.
It is well known that if $G$ is a countable amenable group and $G \acts (Y, \nu)$ factors onto $G \acts (X, \mu)$, then the entropy of the first action must be greater than or equal to the entropy of the second action. In particular, if $G \acts (X, \mu)$ has infinite entropy, then the action $G \acts (Y, \nu)$ does not admit any finite generating partition. On the other hand, we prove that if $G$ is a countable non-amenable group then there exists a finite integer $n$ with the following property: for every probability-measure-preserving action $G \acts (X, \mu)$ there is a $G$-invariant probability measure $\nu$ on $n^G$ such that $G \acts (n^G, \nu)$ factors onto $G \acts (X, \mu)$. For many non-amenable groups, $n$ can be chosen to be $4$ or smaller. We also obtain a similar result with respect to continuous actions on compact spaces and continuous factor maps.
\end{abstract}
\maketitle

\section{Introduction}

Let $G$ be a countably infinite group. For a compact metrizable space $K$, the \emph{(symbolic) Bernoulli shift} $K^G$ is the set of functions $x : G \rightarrow K$, endowed with the topology of point-wise convergence, together with the left coordinate-permutation action of $G$: for $g, h \in G$ and $x \in K^G$, $(g \cdot x)(h) = x(g^{-1} h)$. For a Borel probability measure $\kappa$ on $K$, the probability space $(K^G, \kappa^G)$ is called a \emph{Bernoulli shift}. The space $K^G$ is compact and metrizable, and the action of $G$ on $(K^G, \kappa^G)$ is continuous and measure-preserving. For $n \in \N$ we write $n^G$ for $\{1, 2, \ldots, n\}^G$. 

We assume that all actions of $G$ are Borel actions on standard Borel spaces. Given two actions $G \acts Y$ and $G \acts X$, we say that $G \acts Y$ Borel factors onto $G \acts X$ if there is a $G$-equivariant Borel surjection $\phi: Y \rightarrow X$. When $X$ and $Y$ are topological spaces we furthermore say that $G \acts Y$ continuously factors onto $G \acts X$ if $\phi: Y \rightarrow X$ can be chosen to be continuous. If $\nu$ and $\mu$ are Borel measures on $Y$ and $X$, respectively, then we say that $G \acts (Y, \nu)$ factors onto $G \acts (X, \mu)$ if there is a $G$-equivariant $\nu$-almost-everywhere defined Borel map $\phi: Y \rightarrow X$ such that $\nu$ pushes forward to $\mu$.

%If $G$ is a countable amenable group, then it is well known that for an invariant compact set $Y \subseteq n^G$ the action $G \acts Y$ has topological entropy less than or equal to $\log(n)$, with equality if $Y = n^G$. Similarly, for an invariant Borel probability measure $\nu$ on $n^G$ the action $G \acts (n^G, \nu)$ has measure entropy less than or equal to $\log(n)$, with equality if $\nu = u_n^G$. As neither topological entropy nor measure entropy can increase under taking factors, it follows that for $n < m$ there is no invariant compact set $Y \subseteq n^G$ which continuously factors onto $m^G$. Similarly, there is no invariant Borel probability measure $\nu$ on $n^G$ such that $G \acts (n^G, \nu)$ factors onto $G \acts (m^G, u_m^G)$.

Let $G \acts (Y, \nu)$ be a probability-measure-preserving (\pmp) action of $G$ on a standard probability space $(Y, \nu)$, i.e. $Y$ is a standard Borel space and $\nu$ is a $G$-invariant Borel probability measure. A countable Borel partition $\p$ is said to be \emph{generating} if the smallest $G$-invariant $\sigma$-algebra containing $\p$ coincides, up to $\nu$-null sets, with the collection of Borel subsets of $Y$. It is well known that $(Y, \nu)$ admits a $n$-piece generating partition if and only if $G \acts (Y, \nu)$ is isomorphic to $G \acts (n^G, \nu')$ for some invariant Borel probability measure $\nu'$ on $n^G$. If $G$ is a countable amenable group, then a well known property of entropy states that if $G \acts (Y, \nu)$ factors onto $G \acts (X, \mu)$, then $G \acts (Y, \nu)$ is larger than $G \acts (X, \mu)$ in the sense of entropy, meaning that $h_G(Y, \nu) \geq h_G(X, \mu)$. In particular, since the entropy $h_G(Y, \nu)$ is bounded above by $\log |\p|$ for any finite generating partition $\p$, we see that if $h_G(X, \mu) > \log(n)$ then the action $G \acts (Y, \nu)$ does not admit any generating partition having $n$ pieces.

In 1987, Ornstein and Weiss \cite{OW87} discovered the seemingly bizarre property that for the rank two free group $\F_2$, the Bernoulli shift $\F_2 \acts (2^{\F_2}, u_2^{\F_2})$ factors onto the larger Bernoulli shift $\F_2 \acts (4^{\F_2}, u_4^{\F_2})$. Here, for a natural number $n$ we let $u_n$ denote the uniform probability measure on $\{1, 2, \ldots, n\}$. Ball \cite{Ba05} greatly expanded upon this example by proving that for every countable non-amenable group $G$ there is $n \in \N$ so that $G \acts (n^G, u_n^G)$ factors onto $G \acts ([0, 1]^G, \lambda^G)$, where $\lambda$ is Lebesgue measure. So for every non-amenable group we have a specific example of a ``small'' action factoring onto a ``large'' action. Furthermore, Bowen \cite{Bo11} improved upon the Ornstein--Weiss example to prove that if $G$ contains $\F_2$ as a subgroup then in fact all Bernoulli shifts over $G$ factor onto one-another. The main purpose of this paper is to show that such examples are not merely isolated, but rather quite common for actions of non-amenable groups.

\begin{thm} \label{INTRO NA MEAS}
For every countable non-amenable group $G$ there exists a finite integer $n$ with the following property: If $G \acts (X, \mu)$ is any {\pmp} action then there is a $G$-invariant Borel probability measure $\nu$ on $n^G$ such that $G \acts (n^G, \nu)$ factors onto $G \acts (X, \mu)$. If $G$ is not finitely generated, or $G$ has a non-amenable subgroup of infinite index, or $G$ has a subgroup with cost greater than one, then $n$ can be chosen to be $4$. If $G$ contains $\F_2$ as a subgroup then $n$ can be chosen to be $2$.
\end{thm}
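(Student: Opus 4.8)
The plan is to split the proof into a soft reduction, which lets us assume the target action has a convenient shape, and a hard combinatorial "coding" step that is where non-amenability actually does its work.

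\emph{Reduction.} Since the Borel $\sigma$-algebra of a standard probability space is countably generated, any {\pmp} action $G \acts (X,\mu)$ is isomorphic to a shift $G \acts (\N^G, \mu_X)$ for some invariant measure $\mu_X$. The coordinate projection $(X,\mu) \times ([0,1]^G, \lambda^G) \to (X,\mu)$ is a factor map, so it suffices to factor a small shift onto $(X,\mu) \times ([0,1]^G, \lambda^G)$. Now $(\N^G,\mu_X) \times ([0,1]^G,\lambda^G) \cong \bigl((\N\times[0,1])^G,\ \mu_X \times \lambda^G\bigr)$, and a Borel isomorphism $\N \times [0,1] \to [0,1]$ induces a $G$-equivariant Borel isomorphism of this onto $([0,1]^G,\widetilde\nu)$ for the pushed-forward measure $\widetilde\nu$. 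Hence we may assume from the start that $X=[0,1]^G$, that $\mu=\widetilde\nu$ is shift-invariant, and — the only feature really used — that $([0,1]^G,\widetilde\nu)$ is a Bernoulli extension of its factor $(\N^G,\mu_X)$: conditioned on the $\N^G$-coordinates, the $[0,1]^G$-coordinates are i.i.d. uniform. In particular this builds a reservoir of fresh randomness into the action, which the coding step will consume, and (since Bernoulli extensions make the action essentially free) we may also assume $G \acts (X,\mu)$ is free.

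\emph{The coding step.} The crux is to produce an invariant measure $\nu$ on $n^G$ together with a $G$-equivariant Borel surjection $n^G \to [0,1]^G$ pushing $\nu$ to $\widetilde\nu$; equivalently, writing $(\N^G,\mu_X)$ for the base, to encode the $\N$-valued datum at the identity into an $n$-ary configuration on $G$, $G$-equivariantly, realizing the prescribed joint distribution. The obstruction here is not entropy — there is none, which is exactly why such factors exist at all (the $f$-invariant is famously non-monotone under factors over $\F_2$) — but the combinatorial room needed to simultaneously (a) store, for each site $g$, an infinite list of symbols recording its $\N$-value into a sparse private set of other sites, and (b) respect the correlations in $\mu_X$, which forces the reconstruction map to solve a system of consistency equations. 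When $G$ contains $\F_2$, that room is supplied by a genuine free subgroup: its orbits carry, $G$-equivariantly, a $4$-regular tree, and trees are both cycle-free — so the reconstruction equations, viewed in the group ring $\gr{G}$ exactly as for the Ornstein--Weiss factor $2^{\F_2}\to 4^{\F_2}$, are always solvable — and of exponential growth — so each site owns unboundedly many independent storage sites; this gives $n=2$. When $G$ has a subgroup of cost greater than one, Gaboriau's theory yields, on the orbit equivalence relation of the (now essentially free) action, a Borel treeing whose average degree is bounded away from $2$, defined measurably using the built-in randomness, and the same treeing-based coding runs with $n=4$. The cases "$G$ not finitely generated" and "$G$ has a non-amenable subgroup of infinite index" reduce to configurations of the same type: in the first, write $G$ as an increasing union of finitely generated subgroups and patch the finite-level codings by a compatibility/limiting argument; in the second, install the tree-like structure along the infinitely many cosets.

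\emph{The general non-amenable case and the main obstacle.} This must be handled separately, because there exist finitely generated non-amenable groups — Tarski monsters, say — with no free subgroup, no infinite-index non-amenable subgroup, and no subgroup of cost greater than one. Here one replaces "cycle-free tree" by the raw content of non-amenability: a uniform isoperimetric/expansion inequality, equivalently a paradoxical-decomposition gadget, again installed $G$-equivariantly on the orbit equivalence relation of $(X,\mu)\times(\text{Bernoulli})$ using the fresh randomness. Expansion substitutes for cycle-freeness in making the reconstruction system solvable — this is the route Ball used to factor a small uniform Bernoulli shift onto $([0,1]^G,\lambda^G)$ for arbitrary non-amenable $G$ — and it simultaneously furnishes the storage room. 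I expect the bulk of the technical work, and the subtlest part, to be carrying out this \emph{relative} coding (Ball's argument relativized from a Bernoulli target to an arbitrary base action) together with the bookkeeping needed to drive the alphabet size down to $4$, resp. $2$, in the structured cases; the general case will only give a finite, possibly large, $n$.
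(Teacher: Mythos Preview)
Your proposal is more a sketch of hopes than a proof, and the central mechanism you are groping for is not the one the paper uses. Two concrete gaps:

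\textbf{The coding step is the wrong shape.} You envision ``relativizing Ball's argument'' to pass from Bernoulli targets to arbitrary targets, and you flag this as where ``the bulk of the technical work'' lies. The paper sidesteps this entirely. The key observation is that the Ornstein--Weiss map $\psi_0 : 2^\F \to (2\times 2)^\F$ is a \emph{continuous group homomorphism} between compact abelian groups, and iterating it \`a la Tim\'ar yields a surjective homomorphism $\psi : 2^\F \to (2^\N)^\F$ with compact abelian kernel $K_0$. A Borel section then gives an $\F$-equivariant Borel \emph{isomorphism} $2^\F \cong K_0 \rtimes_{\beta_0} (2^\N)^\F$ where $\beta_0$ takes values in $\Aff(K_0)$. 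Coinducing along the $\F_2$-cocycle $\alpha : \F_2 \times Z \to G$ (this is where the $\F_2$-orbit structure inside $G$-orbits enters) upgrades this to a $G$-equivariant Borel isomorphism
\[
\phi : 2^G \times Z \ \cong \ K \rtimes_\beta \bigl( (2^\N)^G \times Z \bigr),
\]
with $K$ compact abelian and $\beta$ valued in $\Aff(K)$. Now the passage to arbitrary $\mu$ is trivial: embed $X$ into $(2^\N)^G$ (any standard Borel $G$-space embeds there), put Haar measure $\kappa$ on $K$ --- which is $\Aff(K)$-invariant --- and pull $\kappa \times \mu \times \zeta$ back through $\phi$ to get $\nu$ on $2^G \times Z \subseteq (2m)^G$. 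No ``reservoir of fresh randomness,'' no Bernoulli extension of the target, no relative coding: the fresh randomness is exactly the Haar measure on the kernel, and it is decoupled from $\mu$ because $\phi$ is a measure-free Borel isomorphism.

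\textbf{The general non-amenable case does not need a separate argument.} You assert that for Tarski-monster-type groups one must abandon trees and run a paradoxical-decomposition / expansion argument in the style of Ball. This is unnecessary. Gaboriau--Lyons applies to \emph{every} countable non-amenable group: it produces an essentially free $\F_2$-action on $([0,1]^G,\lambda^G)$ with $\F_2$-orbits inside $G$-orbits, hence (via Bowen's isomorphism theorem) on $(m^G, u_m^G)$ for any $m$ with $\log m > \von$. This furnishes the space $Z$ and cocycle $\alpha$ needed for the Key Lemma, uniformly, with $n = 2m$. The special cases (cost $>1$, infinite-index non-amenable subgroup, not finitely generated) only serve to pin $m=2$, hence $n=4$; the $\F_2$-subgroup case lets $Z$ be a point, hence $m=1$ and $n=2$. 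Your proposed case split and the Ball-style endgame are both avoidable.
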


We refer the reader to \cite{G00} for the definition of cost. We also obtain the following topological analogue. To contrast the theorem below, we point out that a well known consequence of topological entropy is that if $G$ is a countable amenable group and $n < m$, then there is no compact invariant set $Y \subseteq n^G$ which continuously and equivariantly factors onto $m^G$.

\begin{thm} \label{INTRO NA TOP}
For every countable non-amenable group $G$ there exists a finite integer $n$ with the following property: If $G \acts X$ is any continuous action on a compact metrizable space, then there is a $G$-invariant compact set $Y \subseteq n^G$ such that $G \acts Y$ continuously factors onto $G \acts X$. If $G$ contains $\F_2$ as a subgroup then $n$ can be chosen to be $2$.
\end{thm}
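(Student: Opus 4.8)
\medskip

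\noindent\emph{Proof strategy.}
The plan is to isolate a single ``universal coding'' statement and deduce the theorem for an arbitrary $X$ from it. Since $X$ is compact metrizable there is a countable point-separating family of continuous functions $f_n : X \to [0,1]$, and $x \mapsto \big(g \mapsto (f_n(g^{-1}x))_{n\in\N}\big)$ is a continuous $G$-equivariant embedding of $X$ into $H^G$, where $H = [0,1]^{\N}$ is the Hilbert cube. As $[0,1]$ is a continuous image of the Cantor set $2^{\N}$, the space $H$ is a continuous image of $2^{\N}$, so $H^G$ is a continuous $G$-equivariant image of $(2^{\N})^G = 2^{\N\times G}$ (with $G$ acting on the $G$-coordinate). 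Thus it suffices to produce, for a suitable finite $n$ (with $n = 2$ when $\F_2 \le G$), a closed $G$-invariant set $Z \subseteq n^G$ and a continuous $G$-equivariant surjection $\psi : Z \twoheadrightarrow 2^{\N\times G}$: if $\theta : 2^{\N\times G}\twoheadrightarrow H^G$ denotes the surjection above and we identify $X$ with its image in $H^G$, then $Y := (\theta\circ\psi)^{-1}(X)$ is a closed $G$-invariant subset of $n^G$ and $\theta\circ\psi$ restricts to a continuous $G$-equivariant surjection $Y \to X$.

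When $\F_2 \le G$ I would further reduce the universal coding statement to the case $G = \F_2$. The left cosets $\{g\F_2 : g \in G\}$ partition $G$; each carries a free transitive right $\F_2$-action (right multiplication), left multiplication by $G$ permutes these cosets while commuting with the right $\F_2$-action, and right multiplication by a word of length $\le r$ moves points a distance $\le r$ in $G$ provided the free generators of $\F_2$ are among the chosen generators of $G$. Hence, given a closed $\F_2$-invariant $Z_0 \subseteq 2^{\F_2}$ with a continuous $\F_2$-equivariant surjection $\psi_0 : Z_0 \twoheadrightarrow 2^{\N\times\F_2}$, applying $\psi_0$ coset-by-coset yields a closed $G$-invariant $Z \subseteq 2^G$ and a continuous $G$-equivariant surjection onto $2^{\N\times G}$ (equivariance because $G$ permutes the cosets compatibly with the right $\F_2$-structure, continuity because that structure has bounded displacement in $G$, surjectivity because $\psi_0$ is onto on each coset). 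So it remains to construct the required subshift of $2^{\F_2}$ for $G = \F_2$, and a subshift of $n^G$ for a general non-amenable $G$.

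For the core construction I would use a ``markers plus data'' symbolic coding --- the continuous counterpart of the measure-theoretic phenomenon behind the examples of Ornstein--Weiss \cite{OW87}, Ball \cite{Ba05}, and Bowen \cite{Bo11}. Configurations of the subshift encode (i) a sparse, combinatorially rigid ``grid'' of marker sites and, relative to it, (ii) unrestricted binary ``data'' bits; the decoding map sends a configuration to the element of $2^{\N\times G}$ whose $(g,i)$-coordinate is the data bit sitting at the $i$-th marker along a canonical ray emanating from $g$ through the grid. For $G = \F_2$ the grid and rays are built from the tree structure of the Cayley graph, which has enough ``room to spare'' to let the marker track and the data track be superimposed into a single $2$-letter track; for a general non-amenable $G$ one instead starts from a Tarski paradoxical decomposition of $G$, which supplies finitely many bounded-displacement bijections of $G$ from which an analogous grid is assembled at the cost of a larger but still finite alphabet $n = n(G)$. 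Closedness and $G$-invariance of the subshift are immediate from the local, translation-invariant nature of the marker conditions; continuity of the decoding holds because the window needed to compute the $(g,i)$-coordinate, while growing with $i$, is finite for each fixed $i$; and surjectivity follows once the ray-assignment $(g,i)\mapsto(i\text{-th marker on }g\text{'s ray})$ is arranged to be injective, so that any prescribed element of $2^{\N\times G}$ is realized simply by choosing the data bits accordingly.

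The hard part, and the only place non-amenability is truly used, will be this last point: forcing the decoding to be \emph{onto}, i.e.\ packing the infinite information of $2^{\N\times G}$ into finitely many symbols per site. Over an amenable group this is impossible by topological entropy (as noted in the paragraph preceding the theorem), so the injective ray-assignment cannot even exist there; making it exist forces one to exploit the tree structure of $\F_2$ (respectively the paradoxical decomposition in general), and the bookkeeping required to keep the whole scheme simultaneously local, equivariant, and surjective --- and, when $\F_2 \le G$, to compress the alphabet all the way down to $2$ --- is the technical heart of the proof. The argument parallels, and shares this core construction with, the proof of Theorem~\ref{INTRO NA MEAS}, where one additionally carries an invariant measure through the coding and refines the alphabet count to obtain the bound $n = 4$.
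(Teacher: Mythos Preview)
Your high-level reduction is the same as the paper's: embed $X$ into $(2^\N)^G$ continuously and equivariantly (this is the paper's Lemma~\ref{LEM TOPREP}), then produce a continuous $G$-equivariant surjection from a compact $G$-invariant subset of $n^G$ onto $(2^\N)^G$ and pull back. Your coset-by-coset lift from $\F_2$ to $G$ when $\F_2 \le G$ is likewise the coinduction argument the paper uses (its Key Lemma~\ref{KLEM FULL} with $Z$ a singleton and $\alpha$ the inclusion homomorphism).

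Where you diverge is the core construction. You propose a combinatorial ``markers plus data'' subshift, extracting bits along canonical rays through a sparse grid. The paper instead uses the Ornstein--Weiss map $\psi_0:2^{\F_2}\to(2\times 2)^{\F_2}$, $\psi_0(x)(f)=(x(f)+x(fa),\,x(f)+x(fb))\bmod 2$, which is already continuous, $\F_2$-equivariant, and a $2$-to-$1$ surjective group homomorphism; iterating it \`a la Tim\'ar yields a continuous surjective homomorphism $2^{\F_2}\twoheadrightarrow (2^\N)^{\F_2}$ with no markers and no bookkeeping, and the kernel gives the affine skew-product isomorphism of the Key Lemma. This algebraic route is cleaner precisely at the point you flag as the ``hard part'': surjectivity and alphabet-$2$ compression are automatic from the homomorphism structure, whereas in a markers-plus-data scheme, superimposing marker and data tracks into two symbols while keeping the decoding surjective is genuinely delicate and you have not indicated how to do it.

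For general non-amenable $G$ you invoke a Tarski paradoxical decomposition directly. The paper instead invokes Whyte's geometric von Neumann theorem to obtain a free $\F_2$-action on $G$ with bounded displacement, encodes that action as a compact $G$-invariant $Z\subseteq (S\times S)^G$, and then feeds $Z$ and the resulting continuous cocycle $\alpha$ into the same Key Lemma, so that the general case is literally reduced to the $\F_2$ construction via coinduction rather than rebuilt from scratch. Your paradoxical-decomposition idea is morally the same input (Whyte's theorem is a refinement of it), but building a separate coding for each $G$ from the decomposition, as you suggest, would duplicate work that the paper's coinduction machinery handles uniformly.
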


It is an immediate consequence of the above theorems and the definition of sofic entropy (see \cite{KL13}) that for any non-amenable sofic group $G$ there is an integer $n$ so that every action of $G$ is the factor of an action having sofic entropy at most $\log(n)$. This is true both in the setting of {\pmp} actions and measure entropy and in the setting of continuous actions on compact metrizable spaces and topological entropy. Similarly, it is an immediate consequence of the definition of f-invariant entropy (see \cite{Bo10a}) that every {\pmp} action of a non-cyclic finite rank free group is the factor of a {\pmp} action having f-invariant entropy at most $\log(2)$. Finally, since the existence of a factor map implies weak containment (see \cite{K12}) we deduce that, with $G$ and $n$ as in Theorem \ref{INTRO NA MEAS}, every {\pmp} action of $G$ is weakly contained in a {\pmp} action of the form $G \acts (n^G, \nu)$.

The mechanics of the proofs of the above theorems yield the following interesting results which hold for all countably infinite groups. In particular, the results below hold for infinite amenable groups. Recall that a measure $\mu$ on $X$ is \emph{quasi-invariant} for $G \acts X$ if translates of $\mu$-null sets are $\mu$-null. Quasi-invariant measures need not be finite or $\sigma$-finite.

\begin{thm} \label{INTRO MEAS}
Let $G$ be a countably infinite group. For every Borel action $G \acts X$ and every quasi-invariant Borel measure $\mu$ on $X$, there is a quasi-invariant Borel measure $\nu$ on $4^G$ such that $G \acts (4^G, \nu)$ factors onto $G \acts (X, \mu)$.
\end{thm}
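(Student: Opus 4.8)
The plan is to exploit the fact that, because we only demand a \emph{quasi-invariant} measure, no entropy-type obstruction is present, and $4^G$ (in fact already $3^G$) contains, as an invariant Borel set, an essentially arbitrary free Borel $G$-action onto which $\mu$ can be ``pulled back''. Let $A = \{z \in 4^G : z \text{ takes the value } 1 \text{ at exactly one coordinate}\}$. This is a $G$-invariant Borel set, the action of $G$ on $A$ is free, and $T = \{z \in A : z^{-1}(1) = \{e\}\}$ is a Borel transversal, so $(g,t) \mapsto g \cdot t$ is a Borel $G$-isomorphism from $G \times T$ (with $G$ acting on the first coordinate) onto $A$, where $T$ is an uncountable standard Borel space. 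After replacing $X$ by $X \times [0,1]$ with the trivial action on $[0,1]$, and $\mu$ by $\mu \times \mathrm{Leb}$ — legitimate since the coordinate projection is an equivariant factor map pushing $\mu \times \mathrm{Leb}$ to $\mu$ — we may assume $X$ has cardinality continuum. Fix a Borel isomorphism $\beta : T \to X$ and define $\phi : A \to X$ by $\phi(g \cdot t) = g \cdot \beta(t)$; equivalently $\phi(z) = g_0 \cdot \beta(g_0^{-1} z)$, where $g_0 = g_0(z)$ is the unique coordinate at which $z$ equals $1$. Then $\phi$ is a $G$-equivariant Borel surjection onto all of $X$ (it already restricts to a bijection onto $X$ on $T$).

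Next I would build $\nu$ by disintegrating along $\phi$. For $x \in X$ the fibre is $\phi^{-1}(x) = \{z_g(x) : g \in G\}$ with $z_g(x) := g \cdot \beta^{-1}(g^{-1} x)$, and these points are pairwise distinct (the value $1$ sits at coordinate $g$), so $\phi^{-1}(x)$ is a Borel copy of $G$. Fix weights $c : G \to (0,\infty)$ with $\sum_{g} c_g = 1$, put $\nu_x := \sum_{g \in G} c_g \, \delta_{z_g(x)}$, a Borel probability measure on $A$ supported on the fibre, and set $\nu := \int_X \nu_x \, d\mu(x)$. The map $x \mapsto \nu_x(B)$ is Borel for each Borel $B$, so $\nu$ is a Borel measure on $4^G$, of total mass $\mu(X)$ (hence finite when $\mu$ is), and since $\phi_* \nu_x = \delta_x$ we get $\phi_* \nu = \mu$ at once. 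Thus $G \acts (4^G, \nu)$ factors onto $G \acts (X, \mu)$ provided $\nu$ is quasi-invariant.

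The one point requiring care — really the only ``obstacle'' — is verifying quasi-invariance of $\nu$; note one should not hope the fibre measures to transform exactly equivariantly, since that would force uniform weights and an infinite total mass on the infinite fibres, so the argument must settle for mutual absolute continuity rather than equality. The computation I have in mind: for $h \in G$ one has $h \cdot z_g(x) = z_{hg}(hx)$, whence $h_* \nu = \int_X \tilde\nu_y \, d(h_* \mu)(y)$ with $\tilde\nu_y := \sum_g c_{h^{-1}g}\, \delta_{z_g(y)}$, and $\tilde\nu_y$ has exactly the same null sets as $\nu_y$ (both are fully supported on the countable set $\phi^{-1}(y)$). Since $\mu$ is quasi-invariant we have $h_* \mu \sim \mu$, and combining the two equivalences gives $h_*\nu(B) = 0 \iff \nu_y(B) = 0$ for $\mu$-a.e.\ $y \iff \nu(B) = 0$, i.e.\ $h_*\nu \sim \nu$; as this holds for every $h$, $\nu$ is quasi-invariant. (This argument in fact yields a measure on $3^G$; I expect the statement is phrased with $4^G$ only for uniformity with the constructions behind Theorems~\ref{INTRO NA MEAS} and \ref{INTRO NA TOP}.)
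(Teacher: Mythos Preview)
Your argument is correct and takes a genuinely different, more elementary route than the paper's. The paper derives this theorem from its Key Lemma: it takes $Z$ to be a single free $G$-orbit inside $2^G$ equipped with a fully supported probability measure, lets $\F_2$ act on $Z$ via an arbitrary bijection $\F_2 \to G$, embeds $X$ into $(2^\N)^G$, and then uses the Key Lemma's $G$-equivariant Borel bijection $2^G \times Z \cong K \rtimes_\beta ((2^\N)^G \times Z)$ to pull back $\kappa \times \mu \times \zeta$ (Haar on $K$, the given $\mu$, and the chosen $\zeta$ on $Z$) to a quasi-invariant measure on $2^G \times Z \subseteq 4^G$. Your approach bypasses the Ornstein--Weiss and coinduction machinery entirely: you observe that $4^G$ already contains an uncountable free smooth $G$-set with an explicit Borel transversal, build the equivariant surjection onto $X$ directly (this is essentially the mechanism behind Proposition~\ref{INTRO BOREL}), and then manufacture $\nu$ by integrating fully supported fibre measures against $\mu$. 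What your approach buys is simplicity and a sharper constant (indeed $3^G$ suffices, as you note); what the paper's approach buys is uniformity with the proofs of Theorems~\ref{INTRO NA MEAS} and~\ref{INTRO NA TOP}, where the Key Lemma is genuinely needed to respect an invariant probability measure or compactness.
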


Recall that a topological space is \emph{Polish} if it contains a countable dense subset and admits a compatible complete metric. If $X$ is Polish and $Y \subseteq X$, then $Y$ is a \emph{Polish subspace} if the subspace topology on $Y$ is Polish. This is equivalent to $Y$ being a $G_\delta$ subset of $X$ \cite[Theorem 3.11]{K95}.

\begin{thm} \label{INTRO TOP}
Let $G$ be a countably infinite group. For every continuous action $G \acts X$ on a Polish space $X$, there is a $G$-invariant Polish subspace $Y \subseteq 4^G$ such that $G \acts Y$ continuously factors onto $G \acts X$.
\end{thm}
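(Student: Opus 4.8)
The plan is to realize $Y$ as a disjoint union of countably many translates of a single ``coding block'', reserving one of the four symbols purely as a position marker. We may assume $X\neq\emptyset$, since otherwise $Y=\emptyset$ does the job. Fix a continuous surjection $p\colon\N^\N\to X$; one exists for every nonempty Polish space. Since $\N^\N$ is a zero-dimensional Polish space it admits a topological embedding $\psi$ into the Cantor space $\{2,3,4\}^{G\setminus\{e\}}$, and then $\psi(\N^\N)$ is a Polish subspace, hence (by the characterization quoted above, \cite[Theorem 3.11]{K95}) a $G_\delta$ subset of $\{2,3,4\}^{G\setminus\{e\}}$ and of $4^{G\setminus\{e\}}$. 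Put
\[
W:=\bigl\{\,z\in 4^G \: z(e)=1\ \text{and}\ z\!\restriction\!(G\setminus\{e\})\in\psi(\N^\N)\,\bigr\},
\qquad Y:=\bigcup_{h\in G}h\cdot W .
\]
Because every $z\in h\cdot W$ carries the symbol $1$ at exactly one coordinate, namely $h$, the translates $h\cdot W$ are pairwise disjoint, $h\cdot W=Y\cap\{z\in 4^G:z(h)=1\}$, and $Y$ is evidently $G$-invariant.

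Next I would define the factor map $\phi\colon Y\to X$. Each $z\in Y$ belongs to a unique piece $h\cdot W$ --- recover $h$ as the unique coordinate of $z$ equal to $1$ --- and then $(h^{-1}\cdot z)\!\restriction\!(G\setminus\{e\})=\psi(\xi)$ for a unique $\xi\in\N^\N$; set $\phi(z):=h\cdot p(\xi)$. Well-definedness is automatic. Surjectivity holds because for $z\in W$ one has $\phi(z)=p\bigl(\psi^{-1}(z\!\restriction\!(G\setminus\{e\}))\bigr)$ and $p$ is onto. Equivariance is purely formal: $g\cdot z$ lies in $(gh)\cdot W$ with the same parameter $\xi$, so $\phi(g\cdot z)=(gh)\cdot p(\xi)=g\cdot\phi(z)$.

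It remains to verify that $Y$ is a Polish subspace of $4^G$ and that $\phi$ is continuous. For the first point: $\{z\in 4^G:z(h)=1\}$ is clopen in $4^G$, so each $h\cdot W=Y\cap\{z:z(h)=1\}$ is clopen in $Y$; consequently $Y$, with its subspace topology, is the topological disjoint union of the pieces $h\cdot W$, each of which is homeomorphic to $\N^\N$ via $z\mapsto\psi^{-1}\bigl((h^{-1}\cdot z)\!\restriction\!(G\setminus\{e\})\bigr)$. A countable topological disjoint union of Polish spaces is Polish, so $Y$ is a Polish subspace and therefore a $G_\delta$ subset of $4^G$. For the second point: on the clopen piece $h\cdot W$ the map $\phi$ is the composite of the continuous maps $z\mapsto(h^{-1}\cdot z)\!\restriction\!(G\setminus\{e\})$, then $\psi^{-1}$, then $p$, then the homeomorphism $x\mapsto h\cdot x$ of $X$; since the pieces $h\cdot W$ form a clopen partition of $Y$, $\phi$ is continuous on all of $Y$.

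The one genuinely delicate step --- and the reason a symbol is sacrificed to serve as a marker --- is the claim that $Y$ is $G_\delta$: a priori $Y=\bigcup_h h\cdot W$ is only a countable union of $G_\delta$ sets, which in general is not $G_\delta$. Reserving the symbol $1$ to mark the ``origin'' of each block is precisely what makes the pieces $h\cdot W$ clopen in $Y$, forcing the subspace topology on $Y$ to coincide with the Polish disjoint-union topology; without such a device one would have no handle on the global topology of $Y$. (The same scheme yields Theorem~\ref{INTRO MEAS} once the lifted measure is spread over all the translates $h\cdot W$ so as to remain quasi-invariant.)
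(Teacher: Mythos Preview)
Your proof is correct and considerably more elementary than the paper's. The paper derives this theorem from its Key Lemma: it takes $Z = G \cdot z \subseteq 2^G$ (with $z$ the indicator of $\{1_G\}$), uses an arbitrary bijection $\F_2 \to G$ to manufacture an $\F_2$-action on $Z$ and a cocycle $\alpha$, applies the Key Lemma to obtain a continuous $G$-equivariant surjection $\pi : 2^G \times Z \to (2^\N)^G$, realizes $X$ inside $(2^\N)^G$ via Lemma~\ref{LEM TOPREP}, and sets $Y = \pi^{-1}(X) \subseteq 2^G \times Z \subseteq 4^G$. Your reserved symbol $1$ plays exactly the role of the paper's second factor $Z$---both mark a basepoint in each $G$-orbit---but you exploit that marker directly to decode a point of $\N^\N$, rather than routing through the Ornstein--Weiss map and the coinduction apparatus of Section~\ref{SEC KEY LEM}.

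What each approach buys: the paper's route is uniform across all four main theorems, and is in fact the only one that survives the passage to \emph{invariant} probability measures (Theorem~\ref{INTRO NA MEAS}) or to compactness (Theorem~\ref{INTRO NA TOP}). Your $Y$ is a countable disjoint union of clopen pieces permuted simply transitively by $G$, so it carries no nonzero $G$-invariant finite measure and is never compact; the Key Lemma is precisely what produces the compact-group fiber $K$ needed to absorb mass in those settings. For the present statement, however---and, as you note, for its quasi-invariant cousin Theorem~\ref{INTRO MEAS}---your argument is shorter and needs nothing beyond elementary descriptive set theory.
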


For the sake of completeness we deduce a Borel dynamics version of these results. Its proof is based on a simple application of the Kuratowski--Mycielski theorem \cite[Theorem 19.1]{K95}.

\begin{prop} \label{INTRO BOREL}
Let $G$ be a countable group, and let $G \acts Y$ be a Borel action. Suppose there is an uncountable invariant set $Y_0 \subseteq Y$ such that the action $G \acts Y_0$ is free. Then for every Borel action $G \acts X$ there is a $G$-invariant Borel set $Z \subseteq Y$ such that $G \acts Z$ Borel factors onto $G \acts X$. Moreover, if $X$ has a fixed point then $G \acts Y$ Borel factors onto $G \acts X$.
\end{prop}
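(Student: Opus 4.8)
The plan is to produce, inside $Y_0$, a Cantor set $C$ that is a partial transversal for the orbit equivalence relation, use $C$ to carve out a $G$-invariant Borel subset $Z \subseteq Y$ that is (isomorphic to) a free extension of $G \acts X$, and then, when $X$ has a fixed point, extend the factor map $Z \to X$ constantly over $Y \setminus Z$. Throughout I treat $Y_0$ as Borel, and I may assume $X \neq \emptyset$ since otherwise the statement is vacuous.

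First I would fix a convenient topology on $Y_0$. Since $G$ is countable and $G \acts Y_0$ is Borel, a standard refinement argument provides a Polish topology on $Y_0$ with the same Borel sets for which every $g \in G$ acts as a homeomorphism. As $Y_0$ is uncountable, its perfect kernel $P$ (the set of condensation points) is a nonempty perfect Polish subspace; it is $G$-invariant because $G$ acts by homeomorphisms, and it remains Borel in $Y$. Replacing $Y_0$ by $P$, I may assume $Y_0$ is a nonempty perfect Polish space carrying a continuous free $G$-action. Now let $E$ be the orbit equivalence relation on $Y_0$. For each $g \neq e$ the set $\{(y, gy) : y \in Y_0\}$ is the graph of a homeomorphism of $Y_0$, hence closed with empty interior in $Y_0 \times Y_0$ (the graph of a continuous self-map of a perfect space has empty interior), so $E \setminus \Delta$ is meager. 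Applying the Kuratowski--Mycielski theorem \cite[Theorem 19.1]{K95} to this single meager set yields a Cantor set $C \subseteq Y_0$ such that distinct points of $C$ lie in distinct $G$-orbits. Since $C$ is an uncountable standard Borel space and $X$ is standard Borel, there is a Borel surjection $\psi : C \to X$.

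Finally I would assemble the factor map. Set $Z = \bigcup_{g \in G} gC$, a $G$-invariant Borel subset of $Y_0 \subseteq Y$. Because $C$ is a partial transversal and the action is free, every $z \in Z$ equals $gc$ for a unique pair $(g,c) \in G \times C$, and $\{gC : g \in G\}$ is a Borel partition of $Z$; hence $\phi(z) := g \cdot \psi(c)$ for $z = gc$ defines a Borel map $\phi : Z \to X$ that is $G$-equivariant (directly from the formula) and surjective (taking $g = e$ shows $X = \psi(C) \subseteq \phi(Z)$). Thus $G \acts Z$ Borel factors onto $G \acts X$. If moreover $X$ has a fixed point $x_0$, I would extend $\phi$ to $\Phi : Y \to X$ by declaring $\Phi \equiv x_0$ on the invariant Borel set $Y \setminus Z$; then $\Phi$ is Borel, surjective, and $G$-equivariant (on $Y \setminus Z$ because $g x_0 = x_0$), so $G \acts Y$ Borel factors onto $G \acts X$.

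The only step that is not routine is the construction of $C$, and this is exactly where Kuratowski--Mycielski is needed: one wants a perfect set of pairwise orbit-inequivalent points, which exists precisely because, after $Y_0$ is suitably topologized, the off-diagonal part of the countable orbit equivalence relation is meager. Everything downstream — the Borelness of $Z$ and of $\phi$, equivariance, surjectivity, and the constant extension across $Y \setminus Z$ — is straightforward bookkeeping.
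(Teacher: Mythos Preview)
Your proof is correct and follows essentially the same route as the paper's: refine to a Polish topology on (a Borel version of) $Y_0$ with $G$ acting by homeomorphisms, pass to the perfect kernel via Cantor--Bendixson, apply Kuratowski--Mycielski to obtain an uncountable compact partial transversal, push a Borel surjection onto $X$ forward equivariantly to $Z = G\cdot C$, and extend constantly to a fixed point when one exists. The only cosmetic difference is that the paper justifies treating $Y_0$ as Borel by explicitly replacing it with the (Borel) set of free points, whereas you assert this at the outset; otherwise the arguments are the same.
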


\begin{cor} \label{INTRO COR}
Let $G$ be a countably infinite group. Then all (symbolic) Bernoulli shifts over $G$ Borel factor onto one another.
\end{cor}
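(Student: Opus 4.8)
The plan is to derive the corollary directly from Proposition~\ref{INTRO BOREL}. Since a Bernoulli shift with a one-element base is a single fixed point, we may restrict attention to bases with $|K| \geq 2$, and it then suffices to prove that $G \acts K^G$ Borel factors onto $G \acts L^G$ for all compact metrizable spaces $K$ and $L$ having at least two points each. I would obtain this by applying the ``moreover'' assertion of Proposition~\ref{INTRO BOREL} with $Y = K^G$ and $X = L^G$; this requires checking exactly two things: that $L^G$ has a $G$-fixed point, and that $K^G$ contains an uncountable $G$-invariant set on which $G$ acts freely.

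The fixed point is immediate: for any $\ell \in L$ the constant function $x \equiv \ell$ satisfies $(g \cdot x)(h) = x(g^{-1} h) = \ell = x(h)$ for all $g, h \in G$, so the constant functions are $G$-fixed points of $L^G$. For the free invariant set, fix two points of $K$ and thereby regard $2^G$ as a closed $G$-invariant subspace of $K^G$. For $g \in G \setminus \{e\}$, the fixed-point set $\{x \in 2^G : g \cdot x = x\}$ consists precisely of those $x$ that are constant on each right coset of $\langle g \rangle$. Given any finitely supported partial function $\sigma$ from $G$ to $\{1,2\}$, since $G$ is infinite while $\dom(\sigma)$ is finite, some right coset $C$ of $\langle g \rangle$ is not contained in $\dom(\sigma)$; as $g \neq e$ this coset has at least two elements, so $\sigma$ extends to a point of $2^G$ that is non-constant on $C$, hence lies outside $\{x : g \cdot x = x\}$. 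Thus each $\{x \in 2^G : g \cdot x = x\}$ is closed and nowhere dense, so their union over $g \neq e$ is meager and the free part $F = 2^G \setminus \bigcup_{g \neq e} \{x : g \cdot x = x\}$ is a dense $G_\delta$ in the Cantor space $2^G$. In particular $F$ is an uncountable $G$-invariant set on which $G$ acts freely, and, viewing $F$ inside $K^G$, the hypotheses of Proposition~\ref{INTRO BOREL} are met, yielding a $G$-equivariant Borel surjection $K^G \to L^G$.

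The essentially only point requiring any care is the nowhere-density of the sets $\{x \in 2^G : g \cdot x = x\}$, which rests on $G$ being infinite: every finite subset of $G$ fails to contain some right coset of $\langle g \rangle$, and that coset has at least two elements because $g \neq e$. Everything else is a direct invocation of Proposition~\ref{INTRO BOREL} together with the elementary observations that $L^G$ has a fixed point and that a dense $G_\delta$ subset of a perfect Polish space is uncountable.
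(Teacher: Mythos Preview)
Your proof is correct and follows exactly the paper's route: the paper simply remarks that the corollary is immediate from Proposition~\ref{INTRO BOREL} since every Bernoulli shift has a fixed point. You have filled in the detail the paper leaves implicit, namely that the free part of $2^G \subseteq K^G$ is a dense $G_\delta$ and hence an uncountable invariant set on which $G$ acts freely.
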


This corollary is immediate since every Bernoulli shift has a fixed point. We mention that this result is entirely distinct from the question as to which Bernoulli shifts factor onto one another (with their Bernoulli measures) since, for example, for amenable groups entropy implies that $G \acts (2^G, u_2^G)$ does not factor onto $G \acts (4^G, u_4^G)$.

Recall that a set $T \subseteq X$ is a transversal for an action $G \acts X$ if $T$ meets every $G$-orbit precisely once. We mention that Weiss \cite{We82} proved that if $\Z \acts Y$ is a free Borel action which does not admit any Borel transversal, and $\Z \acts X$ is any other Borel action, then there is an invariant Borel set $Z \subseteq Y$ such that the action $\Z \acts Z$ does not admit any Borel transversal and there is an equivariant Borel map $f : Z \rightarrow X$.

All of the above theorems are based on a single key lemma. This lemma roughly states that if $G$ and the rank two free group $\F_2$ both act on a standard Borel space $Z$ with each $\F_2$-orbit contained in a $G$-orbit, and with a sufficiently nice cocycle $\alpha$ relating the actions, then $G \acts 2^G \times Z$ is Borel isomorphic to a nice extension of $G \acts (2^\N)^G \times Z$. The proof of this makes use of the original Ornstein--Weiss example \cite{OW87}. We then deduce each of the above theorems by choosing a suitable space $Z$ with suitable actions of $G$ and $\F_2$. In particular, the proof of Theorem \ref{INTRO NA MEAS} relies on the Gaboriau--Lyons solution \cite{GL09} to the measurable von Neumann conjecture, and the proof of Theorem \ref{INTRO NA TOP} relies on Whyte's solution \cite{Wh99} to the geometric von Neumann conjecture. These techniques furthermore allow us to present a short proof of an unpublished result due to Lewis Bowen. We thank Bowen for his permission to include it here. 

Recall that the \emph{Shannon entropy} $\sH(M, \mu)$ of a probability space $(M, \mu)$ is
$$\sH(M, \mu) = \sum_{m \in M} - \mu(m) \log \mu(m)$$
if there is a countable subset of $M$ having full measure, and $\sH(M, \mu) = \infty$ otherwise.

\begin{thm}[Bowen]
Let $G$ be a countable non-amenable group. Let $\fact$ be the infimum of $\sH(M, \mu)$ over all probability spaces $(M, \mu)$ with the property that $G \acts (M^G, \mu^G)$ factors onto all other Bernoulli shifts over $G$. Let $\von$ be the infimum of $\sH(M, \mu)$ over all probability spaces $(M, \mu)$ with the property that there is an essentially free action of $\F_2$ on $(M^G, \mu^G)$ such that $\mu^G$-almost-every $\F_2$-orbit is contained in a $G$-orbit. Then $\fact = \von < \infty$.
\end{thm}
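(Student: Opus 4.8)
The plan is to prove $\fact = \von < \infty$ by establishing two inequalities and a finiteness bound, all of which flow from the Key Lemma applied to a Bernoulli base space. First I would observe that the Key Lemma gives the crucial link: if $\F_2$ acts essentially freely on $(M^G, \mu^G)$ with almost every $\F_2$-orbit inside a $G$-orbit and a sufficiently nice cocycle, then taking $Z = M^G$ with these two actions, $G \acts 2^G \times Z$ is Borel isomorphic to a nice extension of $G \acts (2^\N)^G \times Z$, and in particular (after integrating against the appropriate invariant measures built from $\mu^G$ and a Bernoulli measure on $2^G$) the action $G \acts ((2^\N) \times M)^G \cong (2^\N \times M)^G$ factors onto a Bernoulli shift with base $2^\N$, hence onto all other Bernoulli shifts over $G$ by composing with coordinate maps. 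This yields $\fact \leq \sH(2^\N \times M, u_2^\N \times \mu) = \sH(M,\mu)$ up to the additive constant $\sH(2^\N, u_2^\N) = \infty$ — so a little more care is needed here: one should instead use that $\sH(2^\N, \beta) < \varepsilon$ for a suitable non-uniform product measure $\beta$ on $2^\N$ (the Ornstein--Weiss construction only needs a Bernoulli measure on $2^G$, not the uniform one), giving $\fact \leq \sH(M, \mu) + \varepsilon$ for every $\varepsilon > 0$, hence $\fact \leq \von$.

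For the reverse inequality $\von \leq \fact$, I would argue as follows. Suppose $(M, \mu)$ is such that $G \acts (M^G, \mu^G)$ factors onto all Bernoulli shifts over $G$; in particular it factors onto $G \acts (F_2^{\F_2})^G$ for a suitable finite or countable base — more precisely, onto a Bernoulli shift rich enough to Borel-encode a free $\F_2$-action whose orbits are contained in $G$-orbits. The point is that the free group $\F_2$, acting on itself, sits inside $G$ in the weakest possible sense after passing to a co-induced/Bernoulli picture; concretely one wants a Bernoulli shift over $G$ on which $\F_2$ acts essentially freely with orbits inside $G$-orbits, and the minimal-entropy such Bernoulli shift is what $\von$ measures. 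Using that a factor of $(M^G,\mu^G)$ onto such a target pulls the $\F_2$-action back (being careful that freeness is preserved, which requires choosing the target so that the $\F_2$-action there is free and the factor map is measure-preserving, so the pulled-back action on $M^G$ is free off a null set after a standard argument), one gets a free $\F_2$-action on $(M^G, \mu^G)$ with orbits in $G$-orbits, whence $\von \leq \sH(M, \mu)$, and taking the infimum gives $\von \leq \fact$.

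Finally, finiteness: by the Gaboriau--Lyons theorem (the measurable von Neumann conjecture), for the non-amenable group $G$ there is an essentially free, measure-preserving action of $\F_2$ on the Bernoulli shift $G \acts (2^G, u_2^G)$ (or on $([0,1]^G, \lambda^G)$) with almost every $\F_2$-orbit contained in a $G$-orbit. This immediately witnesses $\von \leq \sH(2, u_2) = \log 2 < \infty$ (or $\von \leq \sH([0,1],\lambda)$, and since one only needs finiteness this suffices), and combined with the two inequalities above we conclude $\fact = \von < \infty$.

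The main obstacle I expect is the $\von \leq \fact$ direction: pulling back the $\F_2$-action through a factor map while \emph{preserving essential freeness} is delicate, since factor maps can collapse orbits and destroy freeness. The resolution is to not pull back an abstract $\F_2$-action but rather to insist the target Bernoulli shift carries a \emph{built-in} free $\F_2$-action of the right type (which one can always arrange, e.g. by tensoring with a Bernoulli shift $(\F_2^{\F_2})^G$-style gadget on which $\F_2$ acts by its own left translation read through the $G$-coordinates, and checking this action is free and has orbits in $G$-orbits), so that the factor map automatically transports it; verifying the cocycle relating the two $G$- and $\F_2$-actions on $M^G$ is "sufficiently nice" in the sense demanded by the Key Lemma is then the remaining technical point, but it should follow from the explicit Bernoulli structure.
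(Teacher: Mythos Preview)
Your argument for $\von \leq \fact$ and for finiteness is essentially what the paper does: factor $(M^G,\mu^G)$ onto $([0,1]^G,\lambda^G)$, invoke Gaboriau--Lyons to get an essentially free $\F_2$-action there with orbits inside $G$-orbits, and pull back the action via the cocycle $f\cdot x = \alpha(f,\phi(x))\cdot x$. Freeness survives simply because $G$ acts essentially freely on $(M^G,\mu^G)$ and $f\mapsto \alpha(f,\phi(x))$ is a.e.\ injective; no elaborate ``built-in gadget'' is needed.

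The genuine gap is in your $\fact \leq \von$ direction. The Key Lemma, applied with $Z=(M^G,\mu^G)$ and the uniform measure on $2^G$, does give that $((2\times M)^G, (u_2\times\mu)^G)$ factors onto $((2^\N)^G,(u_2^\N)^G)$ and hence onto every Bernoulli shift. But this only yields $\fact \leq \sH(M,\mu)+\log 2$, and your proposed fix---replacing $u_2$ by a biased measure of entropy $<\varepsilon$---does not work. The Ornstein--Weiss map $\psi_0(x)(f)=\big(x(f)+x(fa),\,x(f)+x(fb)\big)$ is a group homomorphism of $(\Z/2\Z)^{\F_2}$, and its iterates push Haar measure to Haar measure; this is exactly why $u_2^{\F_2}$ goes to $(u_2^\N)^{\F_2}$. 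If you feed in a non-uniform Bernoulli measure $(p,1-p)^{\F_2}$ with $p\neq 1/2$, the coordinates $x(f)+x(fa)$ and $x(f)+x(fb)$ share the bit $x(f)$ and are no longer independent, so the pushforward is not Bernoulli and you lose the conclusion that the image factors onto all Bernoulli shifts.

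The paper closes the $\varepsilon$-gap by a different mechanism: instead of the Key Lemma it uses Bowen's theorem \cite{Bo11} that over $\F_2$ \emph{every} nontrivial Bernoulli shift factors onto every other, irrespective of entropy. One picks $(K,\kappa)$ with $0<\sH(K,\kappa)<\varepsilon$ and an $\F_2$-equivariant factor map $\phi\colon (K^{\F_2},\kappa^{\F_2})\to (N^{\F_2},\nu^{\F_2})$ for an arbitrary target $(N,\nu)$. Then the coinduction lemmas (the bijections $\psi^*$ identifying $K^G\times Z$ with $K^{\F_2\times\N}\times Z$, and the fact that $\phi^\N\times\id$ is $G$-equivariant) produce a $G$-factor map $(K^G\times M^G,\kappa^G\times\mu^G)\to(N^G,\nu^G)$. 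This gives $\fact\leq \sH(K,\kappa)+\sH(M,\mu)<\sH(M,\mu)+\varepsilon$. The missing ingredient in your outline is precisely this appeal to \cite{Bo11}; without it the $\log 2$ cannot be removed.
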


We remark that, by Bowen's isomorphism theorem \cite{Bo12}, if a probability space $(M, \mu)$ contains at least three points in its support and satisfies $\sH(M, \mu) > \fact = \von$ then $G \acts (M^G, \mu^G)$ factors onto all other Bernoulli shifts over $G$, and there is an essentially free action of $\F_2$ on $(M^G, \mu^G)$ such that $\mu^G$-almost-every $\F_2$-orbit is contained in a $G$-orbit.

The quantity $\fact = \von$ is known to be finite provided $G$ is non-amenable. The finiteness of $\fact$ is due to Ball \cite{Ba05}, and the finiteness of $\von$ is due to Gaboriau and Lyons \cite{GL09}. Bowen \cite{Bo11} proved that $\fact = 0$ if $G$ contains $\F_2$ as a subgroup, and Gaboriau--Lyons \cite{GL09} proved that $\von = 0$ if $G$ has a subgroup of cost greater than one. We observe in Lemma \ref{LEM GL} that $\fact = \von = 0$ if the non-amenable group $G$ is either not finitely generated or has a non-amenable subgroup of infinite index. It is unknown if $\fact = \von = 0$ for all countable non-amenable groups. The quantity $\fact = \von$ has significance to our main theorem, Theorem \ref{INTRO NA MEAS}. Specifically, in that theorem one can always use $n = 2 m$ where $m \geq 3$ satisfies $\fact = \von < \log(m)$. 

In Section \ref{SEC KEY LEM} below we prove the key lemma, in Section \ref{SEC MEAS THMS} we prove the measure-theoretic theorems, and in Section \ref{SEC TOP THMS} we prove the topological theorems and Proposition \ref{INTRO BOREL}.

\section{The Key Lemma} \label{SEC KEY LEM}

Before stating the key lemma, we quickly review some terminology. Let $G$ and $H$ be countable groups, and let $G$ act on a set $X$. A \emph{$G$-cocycle} is a map $\alpha : G \times X \rightarrow H$ satisfying the \emph{cocycle identity}:
$$\forall g_1, g_2 \in G \ \forall x \in X \quad \alpha(g_2 g_1, x) = \alpha(g_2, g_1 \cdot x) \cdot \alpha(g_1, x).$$
Note that if $\alpha$ is independent of the $X$ coordinate then it must be a group homomorphism from $G$ to $H$. If $H$ acts on a set $Y$ and $\alpha : G \times X \rightarrow H$ is a $G$-cocycle, then we can form the \emph{skew product} action of $G$ on $Y \times X$, denoted $G \acts Y \rtimes_\alpha X$, defined by
$$g \cdot (y, x) = (\alpha(g, x) \cdot y, \ g \cdot x).$$
Lastly, recall that for an abelian group $K$, a map $\phi : K \rightarrow K$ is \emph{affine} if there is a group automorphism $\sigma$ of $K$ and $t \in K$ such that $\phi(k) = \sigma(k) \cdot t$ for all $k \in K$. In particular, affine maps are bijective. We let $\Aff(K)$ denote the collection of Borel affine maps on a compact metrizable abelian group $K$.

\begin{klem} \label{KLEM FULL}
Let $G$ be a countably infinite group, and let $\F_2$ be the rank two free group. Let $G$ and $\F_2$ both act on the standard Borel space $Z$, not necessarily freely. Assume that every $\F_2$-orbit is contained in a $G$-orbit. Further assume that there is a Borel $\F_2$-cocycle $\alpha : \F_2 \times Z \rightarrow G$ satisfying $\alpha(f, z) \cdot z = f \cdot z$ for all $f \in \F_2$ and $z \in Z$, and such that the map $f \in \F_2 \mapsto \alpha(f, z) \in G$ is injective for each $z \in Z$. Then there is a compact metrizable abelian group $K$, a $G$-cocycle $\beta: G \times (2^\N)^G \times Z \rightarrow \Aff(K)$, and a $G$-equivariant Borel isomorphism
$$\phi : 2^G \times Z \ \cong \ K \rtimes_\beta \Big( (2^\N)^G \times Z \Big).$$
Moreover, if $G$ and $\F_2$ act continuously on $Z$ and $\alpha$ is continuous, then the induced projection $\pi : 2^G \times Z \rightarrow (2^\N)^G$ is continuous.
\end{klem}

It is important to note that we do not require $G$ and $\F_2$ to act freely on $Z$. Indeed, when $\F_2$ is a subgroup of $G$ we will find it advantageous to let $Z = \{z\}$ be a singleton (with $G$ and $\F_2$ acting trivially) and let $\alpha : \F_2 \times \{z\} \rightarrow G$ correspond to an injective homomorphism. The requirement that $f \mapsto \alpha(f, z)$ be injective for each $z \in Z$ is needed in order to compensate for the fact that $G$ and $\F_2$ might not act freely on $Z$.

For the remainder of this section we let $\F = \F_2$ denote the rank two free group. We first obtain a specialized version of the key lemma when $G = \F$.

\begin{lem} \label{FAKE KLEM}
There is a compact metrizable abelian group $K_0$, a $\F$-cocycle $\beta_0 : \F \times (2^\N)^\F \rightarrow \Aff(K_0)$, and an $\F$-equivariant Borel isomorphism
$$\phi_0 : 2^\F \ \cong \ K_0 \rtimes_{\beta_0} (2^\N)^\F.$$
Moreover, the induced $\F$-equivariant projection $\pi_0 : 2^\F \rightarrow (2^\N)^\F$ is continuous.
\end{lem}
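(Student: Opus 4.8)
The plan is to package the original Ornstein--Weiss factor map from \cite{OW87} as a Borel \emph{isomorphism} of skew-product type rather than merely a factor. Recall that the Ornstein--Weiss construction produces, for the free group $\F = \F_2$, an equivariant map $2^\F \to (2^\N)^\F$ which is measure-theoretically a factor of the $2$-shift onto an infinite-entropy shift; the point here is to identify its fibers. First I would recall the explicit form of the Ornstein--Weiss map. Write $\F = \langle a, b\rangle$ and think of an element of $2^\F$ as a $\Z/2\Z$-labeling of the vertices of the $4$-regular tree (the Cayley graph of $\F$). The construction assigns to each edge of the tree the sum (in $\Z/2\Z$) of the labels of its two endpoints, and then, reading along the two ``horocyclic'' or ``branch'' directions, repackages these edge-sums into an element of $(2^\N)^\F$. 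The key algebraic feature is that this assignment is \emph{affine over $\gr{\F}$}: modulo a global choice of ``basepoint constant'', the edge-labeling determines the vertex-labeling, and the ambiguity is exactly the diagonal $\Z/2\Z$ (flip all labels). More generally, once one fixes the image point in $(2^\N)^\F$, the fiber is a coset of a fixed closed subgroup of $\gr{\F}$, and the $\F$-action permutes these cosets affinely.

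Concretely, I would proceed as follows. (1) Let $K_0$ be the compact metrizable abelian group arising as the fiber group — this will be (a quotient or subgroup of) $\gr{\F}$ with its product topology, or more precisely the group of ``admissible boundary corrections'', identified explicitly from the Ornstein--Weiss combinatorics; the main work is to check it is a single isomorphism type across all fibers, which follows from the homogeneity of the tree under $\F$. (2) Define $\pi_0 : 2^\F \to (2^\N)^\F$ to be the Ornstein--Weiss map itself and verify directly from the formula that it is continuous (it is, since each coordinate of the image depends on only finitely many coordinates of the input — it is a block code). (3) Choose a Borel section $s : (2^\N)^\F \to 2^\F$ of $\pi_0$; by Lusin--Novikov / the fact that fibers are cosets of a fixed group, one can in fact choose $s$ so that the fiberwise difference map $x \mapsto x - s(\pi_0(x)) \in K_0$ is Borel, giving the Borel isomorphism $\phi_0(x) = \big(x - s(\pi_0(x)),\, \pi_0(x)\big)$ onto $K_0 \times (2^\N)^\F$. (4) Transport the $\F$-action through $\phi_0$: since the $\F$-action on $2^\F$ is by the affine (indeed linear) shift on $\gr{\F}$ and it descends to the shift on $(2^\N)^\F$, the induced action on $K_0 \times (2^\N)^\F$ has the form $f\cdot(k,x) = (\beta_0(f,x)\cdot k,\ f\cdot x)$ for a Borel cocycle $\beta_0 : \F \times (2^\N)^\F \to \Aff(K_0)$, where $\beta_0(f,x)$ records the affine correction coming from the choice of section $s$ versus $f\cdot s$. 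The cocycle identity for $\beta_0$ is then automatic from equivariance of $\phi_0$ and associativity of the $\F$-action.

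The main obstacle I anticipate is step (1)–(3): extracting from the Ornstein--Weiss paper the precise description of the fibers of their map and verifying that they form cosets of a single closed subgroup $K_0 \leq \gr{\F}$ with a \emph{Borel} coset-selection whose difference map is Borel. The measure-theoretic statement in \cite{OW87} does not a priori give fiberwise structure on the nose; one must re-examine the construction, note that it is built entirely from the ``edge difference'' linear operator $\partial : \gr{\F} \to \gr{\mathrm{Edges}}$ and a further linear repackaging, and conclude that $\pi_0$ is (up to a Borel affine reparametrization) a continuous group homomorphism $\gr{\F} \to (2^\N)^\F$ composed with translation, so its fibers are genuine cosets of $K_0 = \ker$. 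Once that linearity is pinned down, the existence of $\beta_0$ and the isomorphism $\phi_0$ are formal, and continuity of $\pi_0$ is clear because it is a finite block code. The remaining bookkeeping — checking $K_0$ is compact metrizable (it is a closed subgroup of a countable product of finite groups) and that $\beta_0$ takes values in $\Aff(K_0)$ rather than merely $\mathrm{Sym}(K_0)$ (it does, because the $\F$-shift is linear on $\gr{\F}$ and the section contributes only a translation) — is routine.
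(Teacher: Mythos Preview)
Your approach is essentially the paper's: recognize that the Ornstein--Weiss map is a continuous surjective group homomorphism $2^\F \to (2^\N)^\F$, set $K_0$ equal to its kernel, choose a Borel section $s$, and let $\beta_0(f,y)(k) = f\cdot k + f\cdot s(y) - s(f\cdot y)$ be the section defect. The one point to correct is the passage from $2^\F$ to $(2^\N)^\F$: it is not obtained by reading edge-sums along ``horocyclic directions'' but by Tim\'ar's iteration---observe $(2\times 2)^\F = 2^\F \times 2^\F$, apply the edge-difference map $\psi_0$ again to the second factor, and repeat, yielding in the limit a continuous homomorphism onto $(2^\N)^\F$.
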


\begin{proof}
Let $a$ and $b$ freely generate $\F$. Let $\psi_0 : 2^\F \rightarrow (2 \times 2)^\F$ be the Ornstein--Weiss factor map \cite{OW87} defined by
$$\psi_0(x)(f) = \bigg( x(f) + x(f a), \ x(f) + x(f b) \bigg) \mod 2.$$
Notice that the value $\psi_0(x)(f)$, $f \in \F$, reveals for each edge $e \in \{(f, fa), (f, fb)\}$ in the Cayley graph of $\F$ whether $x$ labels the endpoints of $e$ with the same value or with different values. Since the Cayley graph of $\F$ is a tree, it easily follows from this observation that $\psi_0$ is an $\F$-equivariant $2$-to-$1$ surjection onto $(2 \times 2)^\F$. Now we follow \'{A}d\'{a}m Tim\'{a}r's method \cite[Prop. 2.1]{Ba05} to obtain an $\F$-equivariant map from $2^\F$ onto $(2^\N)^\F$. Specifically, after noting that $(2 \times 2)^\F = 2^F \times 2^\F$, we see in Figure \ref{FIG2} a sequence of maps which stabilize in the limit to a map $\psi : 2^\F \rightarrow (2^\N)^\F$. The map $\psi$ is continuous since $\psi_0$ is continuous. It is not hard to see that the image of $\psi$ is dense. Thus by continuity and the compactness of $2^{\F}$ we deduce that $\psi$ is surjective. Furthermore, $\psi$ is $\F$-equivariant since $\psi_0$ is $\F$-equivariant.

The spaces $2^\F$ and $(2^\N)^\F$ are compact metrizable abelian groups under the operation of coordinate-wise addition modulo $2$, and $\psi$ is a continuous group homomorphism. Therefore the kernel $K_0 \lhd 2^\F$ of $\psi$ is a compact metrizable abelian group. Since $\psi^{-1}(y)$ is a $K_0$-coset, it is compact for each $y \in (2^\N)^\F$. Therefore there is a one-sided Borel inverse $\sigma : (2^\N)^\F \rightarrow 2^\F$ such that $\psi(\sigma(y)) = y$ for all $y \in (2^\N)^\F$ \cite[Theorem 18.18]{K95}. Define a Borel bijection
$$\phi_0 : K_0 \times (2^\N)^\F \rightarrow 2^\F$$
by $\phi_0(k, y) = k + \sigma(y)$.

Define $\beta_0: \F \times (2^\N)^\F \rightarrow \Aff(K_0)$ by
$$\beta_0(f, y)(k) = f \cdot k + f \cdot \sigma(y) - \sigma(f \cdot y).$$
Note that $K_0$ is invariant under the action of $\F$ since $\psi$ is $\F$-equivariant. Additionally, $\F$ acts on $2^\F$ by automorphisms and thus acts on $K_0$ by automorphisms. Therefore for each $f \in \F$ and $y \in (2^\N)^\F$, $\beta_0(f, y) \in \Aff(K_0)$ as required. This cocycle gives us a skew product action $\F \acts K_0 \rtimes_{\beta_0} (2^\N)^\F$. Now to complete the proof, observe that for $k \in K_0$ and $y \in (2^\N)^\F$
\begin{align*}
\phi_0 \Big( \beta_0(f, y)(k), f \cdot y \Big) & = \beta_0(f, y)(k) + \sigma(f \cdot y) = f \cdot k + f \cdot \sigma(y)\\
 & = f \cdot \phi_0(k, y).
\end{align*}
Thus $\phi_0$ is $\F$-equivariant.
\end{proof}

\begin{figure}
\setlength{\unitlength}{0.5cm}
\centering
\begin{picture}(5.5,9)
%\put(13.9, 1.2){\makebox{.here}}
\put(0, 7.5){\makebox{$\displaystyle{2^\F}$}}
\put(0.3, 7.2){\vector(0,-1){1}}
\put(0.4, 6.4){\makebox{$\displaystyle{^{\psi_0}}$}}
\put(0, 5.4){\makebox{$\displaystyle{2^\F \times 2^\F}$}}
\put(0.3, 5.1){\vector(0,-1){1}}
\put(0.4, 4.3){\makebox{$\displaystyle{^\id}$}}
\put(1.9, 5.1){\vector(0,-1){1}}
\put(2, 4.3){\makebox{$\displaystyle{^{\psi_0}}$}}
\put(0, 3.3){\makebox{$\displaystyle{2^\F \times 2^\F \times 2^\F}$}}
\put(0.3, 3){\vector(0,-1){1}}
\put(0.4, 2.2){\makebox{$\displaystyle{^{\id}}$}}
\put(1.9, 3){\vector(0,-1){1}}
\put(2, 2.2){\makebox{$\displaystyle{^{\id}}$}}
\put(3.5, 3){\vector(0,-1){1}}
\put(3.6, 2.2){\makebox{$\displaystyle{^{\psi_0}}$}}
\put(0, 1.2){\makebox{$\displaystyle{2^\F \times 2^\F \times 2^\F \times 2^\F}$}}
\put(2.7, 0.7){\makebox{.}}
\put(2.7,0.35){\makebox{.}}
\put(2.7,0){\makebox{.}}
%\put(11.2,0){\makebox{.here}}
\end{picture}
\caption{A sequence of maps which stabilize in the limit. Here $\id$ denotes the identity map. \label{FIG2}}
\end{figure}

For the remainder of this section we assume the requirements of the Key Lemma \ref{KLEM FULL} are satisfied. That is, fix a countable group $G$, fix a standard Borel space $Z$, and fix Borel actions of $G$ and $\F$ on $Z$ (not necessarily free actions). Assume that every $\F$-orbit is contained in a $G$-orbit and assume that there is a Borel $\F$-cocycle $\alpha : \F \times Z \rightarrow G$ with $\alpha(f, z) \cdot z = f \cdot z$ for all $f \in \F$ and $z \in Z$ and such that the map $f \mapsto \alpha(f, z)$ is injective for every $z \in Z$. We will deduce the key lemma from the above lemma by using a coinduction argument, in a fashion similar to work of Bowen \cite{Bo12}, Epstein \cite{E07}, and Stepin \cite{St75}. Basically, the idea is to use the $\F$-action on $Z$ in order to view $2^G \times Z$ as $2^{\F \times \N} \times Z$ (here the $\N$ appears since each $G$-orbit on $Z$ may break into a countably infinite number of $\F$-orbits) and then apply the above lemma to obtain a bijection with $K_0^\N \times (2^\N)^{\F \times \N} \times Z$ which, finally, may be viewed as $K_0^\N \times (2^\N)^G \times Z$. 

Recall that given a Borel action $G \acts X$, a set $T \subseteq X$ is a \emph{transversal} for the $G$-orbits if $T$ meets every $G$-orbit in precisely one point.

\begin{lem} \label{LEM SMOOTH}
Fix an injective homomorphism $\iota : \F \rightarrow \F$ whose image has infinite index in $\F$. Then the operation
$$f \cdot (g, z) = \Big(g \cdot \alpha(\iota(f), g^{-1} \cdot z)^{-1}, \ \ z \Big)$$
defines a free Borel action of $\F$ on $G \times Z$ which commutes with the diagonal action of $G$. Furthermore, there is a Borel injection $c : \N \times Z \rightarrow G \times Z$ whose image is a transversal of the $\F$-orbits and which satisfies $c(n, z) \in G \times \{z\}$ and $c(0, z) = (1_G, z)$ for all $z \in Z$ and $n \in \N$.
\end{lem}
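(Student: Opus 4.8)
\emph{Proof proposal.} The plan is to verify the action axioms by direct cocycle computations, establish freeness and the commuting property, and then construct $c$ by a greedy selection of $\F$-orbit representatives that is manifestly Borel uniformly in $z$. So first I would check that the formula $f \cdot (g, z) = \big(g \cdot \alpha(\iota(f), g^{-1} \cdot z)^{-1},\ z\big)$ defines a Borel left action of $\F$ on $G \times Z$. Borelness is immediate, since $\alpha$ is Borel, $G$ is discrete, the action $G \acts Z$ is Borel, and $\iota$ is fixed. For the identity axiom, the cocycle identity with $g_1 = g_2 = 1_\F$ forces $\alpha(1_\F, w) = 1_G$ for all $w$, so $1_\F$ acts trivially. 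For the composition law, the crucial observation is that if $g_1 := g \cdot \alpha(\iota(f_1), g^{-1} \cdot z)^{-1}$ denotes the first coordinate of $f_1 \cdot (g, z)$, then $g_1^{-1} \cdot z = \iota(f_1) \cdot (g^{-1} \cdot z)$, using the hypothesis $\alpha(f, w) \cdot w = f \cdot w$. Substituting this into $\alpha(\iota(f_2), g_1^{-1} \cdot z)$ and applying the cocycle identity to $\alpha(\iota(f_2 f_1), g^{-1} \cdot z) = \alpha\big(\iota(f_2), \iota(f_1) \cdot (g^{-1} \cdot z)\big) \cdot \alpha(\iota(f_1), g^{-1} \cdot z)$ --- note $\iota(f_2)\iota(f_1) = \iota(f_2 f_1)$ since $\iota$ is a homomorphism --- lets one reduce $f_2 \cdot (f_1 \cdot (g, z))$ to $(f_2 f_1) \cdot (g, z)$. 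Freeness then follows at once: $f \cdot (g, z) = (g, z)$ forces $\alpha(\iota(f), g^{-1} \cdot z) = 1_G = \alpha(1_\F, g^{-1} \cdot z)$, hence $\iota(f) = 1_\F$ by injectivity of $f' \mapsto \alpha(f', g^{-1} \cdot z)$, and then $f = 1_\F$ by injectivity of $\iota$. Commutativity with the diagonal action $h \cdot (g, z) = (hg,\ h \cdot z)$ is a one-line verification: both $f \cdot (h \cdot (g, z))$ and $h \cdot (f \cdot (g, z))$ equal $\big(hg \cdot \alpha(\iota(f), g^{-1} \cdot z)^{-1},\ h \cdot z\big)$.

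Since the $\F$-action leaves the $Z$-coordinate fixed, every $\F$-orbit lies in a single fiber $G \times \{z\}$, so it suffices to produce, Borel-uniformly in $z$, a transversal of the $\F$-orbits inside $G \times \{z\}$ together with an enumeration of it by $\N$ beginning with $(1_G, z)$. A key intermediate fact is that each fiber $G \times \{z\}$ contains infinitely many $\F$-orbits. To see this I would compute, once more from the cocycle identity together with $\alpha(f, z) \cdot z = f \cdot z$, that $f' \cdot (\alpha(f, z)^{-1},\ z) = (\alpha(\iota(f') f, z)^{-1},\ z)$; hence the $\F$-orbit of $(\alpha(f, z)^{-1}, z)$ equals $\{(\alpha(\iota(f') f, z)^{-1},\ z) : f' \in \F\}$, and since $f'' \mapsto \alpha(f'', z)$ is injective, two such points lie in the same $\F$-orbit exactly when $\iota(\F) f_1 = \iota(\F) f_2$. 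As $\iota(\F)$ has infinite index in $\F$, this already exhibits infinitely many distinct $\F$-orbits within $G \times \{z\}$.

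Finally, fix once and for all an enumeration $G = \{g_0, g_1, g_2, \dots\}$ with $g_0 = 1_G$, and for each $z$ let $T_z \subseteq G \times \{z\}$ consist of those $(g_i, z)$ that lie in no common $\F$-orbit with any $(g_j, z)$ for $j < i$; this is the greedy transversal of the $\F$-orbits inside the fiber, it contains $(1_G, z) = (g_0, z)$, and it is infinite by the preceding paragraph. The relation ``$(g_i, z)$ and $(g_j, z)$ are $\F$-equivalent'' unwinds to $\bigcup_{f \in \F} \{z : \alpha(\iota(f), g_i^{-1} \cdot z) = g_j^{-1} g_i\}$, a countable union of Borel sets and hence Borel in $z$; consequently $\{z : (g_i, z) \in T_z\}$ is Borel, being a finite intersection of such conditions, and enumerating $T_z$ in the order inherited from the enumeration of $G$ produces a Borel injection $c : \N \times Z \to G \times Z$ satisfying $c(n, z) \in G \times \{z\}$ and $c(0, z) = (g_0, z) = (1_G, z)$, with image equal to the union over $z$ of the sets $T_z$, which meets every $\F$-orbit exactly once. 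The main obstacle is the computation identifying the $\F$-orbit of $(\alpha(f, z)^{-1}, z)$ with the coset $\iota(\F) f$: this is exactly what turns the algebraic hypothesis that $\iota(\F)$ has infinite index into the infinitude of the orbit space, without which the enumeration by $\N$ could not even be set up. The cocycle bookkeeping in the first step is the other place requiring some care, although it is routine.
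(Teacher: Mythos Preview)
Your proof is correct and follows essentially the same route as the paper's: verify the action axioms via the cocycle identity, deduce freeness from the injectivity hypotheses, check commutativity with the diagonal $G$-action, argue that each fiber $G \times \{z\}$ has infinitely many $\F$-orbits because $[\F:\iota(\F)]=\infty$, and then build $c$ by greedy selection along a fixed enumeration of $G$. The only cosmetic difference is that the paper first introduces the auxiliary action $f * (g,z) = (g\cdot\alpha(f,g^{-1}\cdot z)^{-1},z)$ and then observes $f\cdot(g,z)=\iota(f)*(g,z)$, which makes the infinitude of $\cdot$-orbits immediate from freeness of $*$ and infinite index; your explicit computation identifying the $\cdot$-orbit of $(\alpha(f,z)^{-1},z)$ with the coset $\iota(\F)f$ is exactly the same argument unpacked.
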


\begin{proof}
We temporarily consider the operation $f * (g, z) = (g \cdot \alpha(f, g^{-1} \cdot z)^{-1}, z)$, where $\iota$ is intentionally excluded (compare with the statement of the lemma). Fix $u, f \in \F$, $g \in G$, and $z \in Z$. Notice that $\alpha(f, g^{-1} \cdot z) \cdot g^{-1} \cdot z = f \cdot (g^{-1} \cdot z)$. So by the cocycle identity
\begin{align*}
& \ \alpha \Big(u, \ \alpha(f, g^{-1} \cdot z) \cdot g^{-1} \cdot z \Big)\\
= & \ \alpha \Big(u, \ f \cdot (g^{-1} \cdot z) \Big)\\
= & \ \alpha \Big(u, \ f \cdot (g^{-1} \cdot z) \Big) \cdot \alpha \Big(f, g^{-1} \cdot z \Big) \cdot \alpha \Big(f, g^{-1} \cdot z \Big)^{-1}\\
= & \ \alpha(u f, g^{-1} \cdot z) \cdot \alpha \Big(f, g^{-1} \cdot z \Big)^{-1}.
\end{align*}
Therefore, setting $h = g \cdot \alpha(f, g^{-1} \cdot z)^{-1} \in G$, we have
$$\alpha \Big( u, h^{-1} \cdot z \Big) \cdot h^{-1} = \alpha \Big(u, \ \alpha(f, g^{-1} \cdot z) \cdot g^{-1} \cdot z \Big) \cdot \alpha \Big(f, g^{-1} \cdot z \Big) \cdot g^{-1} = \alpha \Big( u f, g^{-1} \cdot z \Big) \cdot g^{-1}.$$
It follows that
$$u * f * (g, z) = u * (h, z) = (h \cdot \alpha(u, h^{-1} \cdot z)^{-1}, \ \ z) = (g \cdot \alpha(u f, g^{-1} \cdot z)^{-1}, \ \ z) = (u f) * (g, z).$$
So $*$ is an action of $\F$.

Since the map $f \mapsto \alpha(f, z) \in G$ is injective for all $z \in Z$, the action $*$ of $\F$ on $G \times Z$ is free. The operation $\cdot$ defined in the statement of this lemma is related to $*$ by $f \cdot (g, z) = \iota(f) * (g, z)$. So $\cdot$ is a free action of $\F$ on $G \times Z$. Since $*$ is a free action and the image of $\iota$ has infinite index in $\F$, each $*$ invariant set $G \times \{z\}$ decomposes into a countably infinite number of $\cdot$ $\F$-orbits. Fix an enumeration $1_G = g_0, g_1, \ldots$ of $G$. For $z \in Z$ set $c(0, z) = (1_G, z)$ and inductively define
$$c(n, z) = (g_k, z) \qquad \text{where } k \text{ is least with } (g_k, z) \neq f \cdot c(i, z) \text{ for all } f \in \F \text{ and } 0 \leq i < n.$$
Note that $c(n, z)$ is defined since the set $G \times \{z\}$ contains infinitely many $\F$-orbits. Then $c$ is a Borel injection and its image is a transversal of the $\F$-orbits. Finally, this action commutes with the diagonal action of $G$ since for $g, h \in G$ and $f \in \F$ we have $\alpha(\iota(f), g^{-1} \cdot z) = \alpha(\iota(f), g^{-1} h^{-1} \cdot (h \cdot z))$.
\end{proof}

\begin{lem} \label{LEM CO1}
Define $\delta : G \times Z \rightarrow \Sym(\N)$ by
$$\delta(g, z)(n) = k \Longleftrightarrow g \cdot c(n, z) \in \F \cdot c(k, g \cdot z).$$
Then $\delta$ is a $G$-cocycle.
\end{lem}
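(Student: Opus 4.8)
The plan is to verify the cocycle identity $\delta(g_2 g_1, z)(n) = \delta(g_2, g_1 \cdot z)\big(\delta(g_1, z)(n)\big)$ directly from the definition, with the main point being to untangle how the $\F$-action $\cdot$ on $G \times Z$ from Lemma \ref{LEM SMOOTH} interacts with the diagonal $G$-action. First I would recall that by Lemma \ref{LEM SMOOTH} the $\F$-action on $G \times Z$ commutes with the diagonal $G$-action and preserves each slice $G \times \{z\}$, while the map $c : \N \times Z \to G \times Z$ picks out a transversal of the $\F$-orbits inside each slice with $c(n,z) \in G \times \{z\}$. Consequently $g \cdot c(n,z)$ lies in the slice $G \times \{g \cdot z\}$, and the definition of $\delta(g,z)(n)$ as the unique $k$ with $g \cdot c(n,z) \in \F \cdot c(k, g\cdot z)$ makes sense: every point of $G \times \{g \cdot z\}$ lies in exactly one $\F$-orbit, which meets the transversal in exactly one point $c(k, g\cdot z)$. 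I should also note in passing that $\delta(g,z)$ is a bijection of $\N$ — its inverse is $\delta(g^{-1}, g\cdot z)$ — so that $\delta$ indeed takes values in $\Sym(\N)$, although strictly this follows once the cocycle identity and $\delta(1_G, z) = \id$ are established.

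The core computation is short. Fix $g_1, g_2 \in G$, $z \in Z$, and $n \in \N$. Let $k = \delta(g_1, z)(n)$, so that $g_1 \cdot c(n,z) = f \cdot c(k, g_1 \cdot z)$ for some $f \in \F$; and let $m = \delta(g_2, g_1 \cdot z)(k)$, so that $g_2 \cdot c(k, g_1 \cdot z) = f' \cdot c(m, g_2 g_1 \cdot z)$ for some $f' \in \F$. Applying $g_2$ to the first equation and using that the $\F$-action commutes with the $G$-action gives
\[
(g_2 g_1) \cdot c(n, z) = g_2 \cdot \big(f \cdot c(k, g_1 \cdot z)\big) = f \cdot \big(g_2 \cdot c(k, g_1 \cdot z)\big) = f \cdot f' \cdot c(m, g_2 g_1 \cdot z),
\]
so $(g_2 g_1) \cdot c(n,z) \in \F \cdot c(m, g_2 g_1 \cdot z)$, which by definition means $\delta(g_2 g_1, z)(n) = m = \delta(g_2, g_1 \cdot z)\big(\delta(g_1, z)(n)\big)$. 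This is exactly the cocycle identity.

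I do not anticipate a serious obstacle here; the only genuine point requiring care is the commutation of the two actions, which is precisely the last sentence of Lemma \ref{LEM SMOOTH}, together with the observation that $c(k, g_1 \cdot z) \in G \times \{g_1 \cdot z\}$ so that $g_2$ maps it into the slice over $g_2 g_1 \cdot z$ and the transversal property applies there. One should also confirm Borel measurability of $\delta$: since $c$ is Borel, the $\F$-action is Borel, and $\F$ is countable, the relation $g \cdot c(n,z) \in \F \cdot c(k, g\cdot z)$ is a countable union over $f \in \F$ of Borel conditions, so $\delta$ is Borel as a map into $\Sym(\N)$ with its standard Borel structure.
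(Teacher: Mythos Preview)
Your proposal is correct and follows essentially the same argument as the paper: fix $n$, name $k=\delta(g_1,z)(n)$ and $m=\delta(g_2,g_1\cdot z)(k)$, and use the commutation of the $\F$- and $G$-actions on $G\times Z$ from Lemma~\ref{LEM SMOOTH} to conclude $(g_2 g_1)\cdot c(n,z)\in\F\cdot c(m,g_2 g_1\cdot z)$. Your additional remarks on well-definedness of $\delta(g,z)\in\Sym(\N)$ and Borel measurability are not in the paper's proof but are correct and worth including.
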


\begin{proof}
Fix $z \in Z$, $g, h \in G$, and $n \in \N$. Set $k = \delta(g, z)(n)$ and $m = \delta(h, g \cdot z)(k)$. By definition, there are $u, v \in \F$ with
$$g \cdot c(n, z) = u \cdot c(k, g \cdot z) \quad \text{and} \quad h \cdot c(k, g \cdot z) = v \cdot c(m, h g \cdot z).$$
By the previous lemma, the actions of $\F$ and $G$ on $G \times Z$ commute. Therefore
$$h g \cdot c(n, z) = h \cdot (u \cdot c(k, g \cdot z)) = u \cdot (h \cdot c(k, g \cdot z)) = u v \cdot c(m, h g \cdot z).$$
So by the definition of $\delta$ we have
$$\delta(h g, z)(n) = m = \delta(h, g \cdot z)(k) = \delta(h, g \cdot z) \circ \delta(g, z) (n).$$
Since $n \in \N$ was arbitrary we conclude that $\delta(h g, z) = \delta(h, g \cdot z) \circ \delta(g, z)$.
\end{proof}

\begin{lem} \label{LEM CO2}
Define $\gamma : G \times \N \times Z \rightarrow \F$ by
$$\gamma(g, n, z) = u \Longleftrightarrow g \cdot c(n, z) = u^{-1} \cdot c(k, g \cdot z) \quad \text{where} \quad k = \delta(g, z)(n).$$
Then $\gamma$ is a $G$-cocycle.
\end{lem}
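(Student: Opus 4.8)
The plan is to mimic the proof of Lemma~\ref{LEM CO1}, since $\gamma$ records the $\F$-element that implements the return map from $g \cdot c(n,z)$ to the transversal, and cocycles of this ``return to transversal'' type compose in a controlled way. First I would fix $z \in Z$, $g, h \in G$, and $n \in \N$, and set $k = \delta(g,z)(n)$ and $m = \delta(h, g\cdot z)(k)$; note that by Lemma~\ref{LEM CO1}, $\delta(hg,z)(n) = m$, so the value $\gamma(hg,z,n)$ is the unique $w \in \F$ with $hg\cdot c(n,z) = w^{-1}\cdot c(m, hg\cdot z)$. Writing $u = \gamma(g,z,n)$ and $v = \gamma(h, g\cdot z, k)$, by definition we have $g\cdot c(n,z) = u^{-1}\cdot c(k, g\cdot z)$ and $h\cdot c(k, g\cdot z) = v^{-1}\cdot c(m, hg\cdot z)$.

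Next I would apply $h$ to the first equation and use that the $\F$-action and $G$-action on $G\times Z$ commute (Lemma~\ref{LEM SMOOTH}):
\begin{align*}
hg\cdot c(n,z) &= h\cdot\big(u^{-1}\cdot c(k, g\cdot z)\big) = u^{-1}\cdot\big(h\cdot c(k, g\cdot z)\big)\\
&= u^{-1}\cdot\big(v^{-1}\cdot c(m, hg\cdot z)\big) = (vu)^{-1}\cdot c(m, hg\cdot z).
\end{align*}
By uniqueness of the transversal-return element (the image of $c$ meets each $\F$-orbit exactly once and the $\F$-action is free, again by Lemma~\ref{LEM SMOOTH}), this forces $\gamma(hg, z, n) = vu = \gamma(h, g\cdot z, k)\cdot\gamma(g, z, n) = \gamma(h, g\cdot z, \delta(g,z)(n))\cdot\gamma(g,z,n)$. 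Since $n$ was arbitrary, this is exactly the cocycle identity for $\gamma$ with respect to the $G$-action on $\N\times Z$ given by $g\cdot(n,z) = (\delta(g,z)(n), g\cdot z)$.

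I would also remark briefly that $\gamma$ is Borel, since $\delta$ and $c$ are Borel and the defining relation expresses $\gamma(g,n,z)$ as the unique solution to a Borel condition (for each fixed $u\in\F$, the set of $(g,n,z)$ with $g\cdot c(n,z) = u^{-1}\cdot c(\delta(g,z)(n), g\cdot z)$ is Borel), so $\gamma$ is a countable union of Borel graphs. I do not anticipate a serious obstacle here: the entire argument is a bookkeeping computation parallel to Lemma~\ref{LEM CO1}, and the only point requiring care is being explicit about which $\F$-action (via $\cdot$) makes the transversal-return element well-defined and ensures the inversion conventions line up so that the composition comes out as $vu$ rather than $uv$.
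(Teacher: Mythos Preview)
Your argument is correct and essentially identical to the paper's proof: both fix $k=\delta(g,z)(n)$, $m=\delta(h,g\cdot z)(k)$, $u=\gamma(g,n,z)$, $v=\gamma(h,k,g\cdot z)$, apply $h$ to $g\cdot c(n,z)=u^{-1}\cdot c(k,g\cdot z)$, invoke the commutativity from Lemma~\ref{LEM SMOOTH}, and read off $\gamma(hg,n,z)=vu$. One cosmetic slip: you consistently write $\gamma(g,z,n)$ with the last two arguments swapped relative to the definition $\gamma:G\times\N\times Z\to\F$; fix that, and your write-up (which is slightly more explicit than the paper's about uniqueness and Borelness) is fine.
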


\begin{proof}
Fix $z \in Z$, $n \in \N$, and $g, h \in G$. Set $k = \delta(g, z)(n)$ and $m = \delta(h, g \cdot z)(k) = \delta(h g, z)(n)$. Also set $u = \gamma(g, n, z)$ and $v = \gamma(h, k, g \cdot z)$. By the definitions of $\delta$ and $\gamma$ we have
$$g \cdot c(n, z) = u^{-1} \cdot c(k, g \cdot z) \quad \text{and} \quad h \cdot c(k, g \cdot z) = v^{-1} \cdot c(m, h g \cdot z).$$
Therefore, using the commutativity of the actions of $G$ and $\F$ on $G \times Z$, we obtain
$$h g \cdot c(n, z) = h \cdot (u^{-1} \cdot c(k, g \cdot z)) = u^{-1} \cdot (h \cdot c(k, g \cdot z)) = u^{-1} v^{-1} \cdot c(m, h g \cdot z).$$
Now it follows from the definition of $\gamma$ that
$$\gamma(h g, n, z) = v u = \gamma(h, k, g \cdot z) \cdot \gamma(g, n, z).$$
The action of $G$ on $\N \rtimes_\delta Z$ is such that $g \cdot (n, z) = (k, g \cdot z)$. Thus the above expression is the cocycle identity.
\end{proof}

\begin{cor} \label{COR BIJECT}
The map $\psi : \F \times \N \times Z \rightarrow G \times Z$ defined by
$$\psi(f, n, z) = f^{-1} \cdot c(n, z) \in G \times \{z\}$$
is a Borel bijection which conjugates the diagonal action of $G$ on $G \times Z$ to the double skew product action of $G$ on $\F \rtimes_\gamma (\N \rtimes_\delta Z)$.
\end{cor}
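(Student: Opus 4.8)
The plan is to establish the three assertions of the corollary in turn — Borelness, bijectivity, and $G$-equivariance — by directly unwinding the definitions of $c$, $\delta$, and $\gamma$ from Lemmas \ref{LEM SMOOTH}, \ref{LEM CO1}, and \ref{LEM CO2}. At the outset I would emphasize one bookkeeping point that pervades the argument: the dot in the formula $\psi(f,n,z) = f^{-1} \cdot c(n,z)$ refers to the free action ``$\cdot$'' of $\F$ on $G \times Z$ built in Lemma \ref{LEM SMOOTH}, not to the original action of $\F$ on $Z$; moreover both this $\F$-action and the diagonal $G$-action on $G \times Z$ preserve each slice $G \times \{z\}$, so the entire statement can be analyzed slice by slice.

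For bijectivity, fix $z \in Z$. The greedy construction in Lemma \ref{LEM SMOOTH} chooses, for this $z$, exactly one representative per $\F$-orbit inside $G \times \{z\}$, so $\{c(n,z) : n \in \N\}$ is a transversal for the $\F$-orbits in $G \times \{z\}$, and the $\F$-action on $G \times \{z\}$ is free. Hence every point of $G \times \{z\}$ is of the form $f^{-1} \cdot c(n,z)$ for a unique $n$ (the index of the orbit containing it) and then a unique $f$ (by freeness); this is precisely the statement that $\psi$ restricts to a bijection $\F \times \N \times \{z\} \to G \times \{z\}$, and letting $z$ vary gives that $\psi : \F \times \N \times Z \to G \times Z$ is a bijection. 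Borelness of $\psi$ is clear from Borelness of $c$ and of the two actions, and the inverse is then automatically Borel as a Borel bijection of standard Borel spaces; alternatively one checks directly that the unique $(f,n)$ with $f^{-1}\cdot c(n,z) = (h,z)$ depends Borel-measurably on $(h,z)$, using that the $\F$-orbit equivalence relation on $G \times Z$ is Borel with Borel transversal $c(\N \times Z)$ and that the action is free.

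For equivariance I would simply compute. The double skew product action unwinds to $g \cdot (f,n,z) = \big(\gamma(g,n,z)\,f,\ \delta(g,z)(n),\ g \cdot z\big)$; writing $k = \delta(g,z)(n)$ and $u = \gamma(g,n,z)$,
\[
\psi\big(g \cdot (f,n,z)\big) = (uf)^{-1} \cdot c(k, g\cdot z) = f^{-1} \cdot \big(u^{-1} \cdot c(k, g\cdot z)\big).
\]
By the defining relation for $\gamma$ in Lemma \ref{LEM CO2} we have $u^{-1} \cdot c(k, g\cdot z) = g \cdot c(n,z)$, where $g\cdot$ here is the diagonal $G$-action on $G \times Z$; and since the $\F$-action ``$\cdot$'' commutes with that $G$-action by Lemma \ref{LEM SMOOTH}, the right-hand side equals $g \cdot \big(f^{-1} \cdot c(n,z)\big) = g \cdot \psi(f,n,z)$. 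I do not anticipate a genuine obstacle: the content is entirely bookkeeping, and the only real care needed is to keep straight which $\F$-action is in play and to respect the order $\F \rtimes_\gamma (\N \rtimes_\delta Z)$ of the iterated skew product, so that $\gamma$ is evaluated at $(n,z)$ before $\delta$ relocates it.
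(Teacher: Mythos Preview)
Your proposal is correct and follows essentially the same route as the paper: bijectivity from the transversal property plus freeness, Borelness from that of $c$ and the $\F$-action, and equivariance by setting $k = \delta(g,z)(n)$, $u = \gamma(g,n,z)$, invoking $g \cdot c(n,z) = u^{-1} \cdot c(k, g\cdot z)$, and using commutativity of the $G$- and $\F$-actions. The only cosmetic difference is that the paper starts from $g \cdot \psi(f,n,z)$ and arrives at $\psi(g\cdot(f,n,z))$ whereas you run the same chain in the opposite direction.
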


\begin{proof}
The map $\psi$ is bijective since the image of $c$ is a transversal for the $\F$-orbits and $\F$ acts freely on $G \times Z$. Also, $\psi$ is Borel since $c$ is Borel and the action of $\F$ is Borel. Finally, fix $g \in G$, $f \in \F$, $n \in \N$, and $z \in Z$. Set $k = \delta(g, z)(n)$ and $u = \gamma(g, n, z)$. From the definitions of $\delta$ and $\gamma$ we have
$$g \cdot c(n, z) = u^{-1} \cdot c(k, g \cdot z).$$
Note that the action $G \acts \F \rtimes_\gamma (\N \rtimes_\delta Z)$ is such that $g \cdot (f, n, z) = (u f, k, g \cdot z)$. Since the actions of $G$ and $\F$ on $G \times Z$ commute we obtain
\begin{align*}
g \cdot \psi(f, n, z) & = g \cdot f^{-1} \cdot c(n, z) = f^{-1} \cdot g \cdot c(n, z)\\
 & = f^{-1} u^{-1} \cdot c(k, g \cdot z) = \psi(u f, k, g \cdot z)\\
 & = \psi(g \cdot (f, n, z)).
\end{align*}
We conclude that $\psi$ is $G$-equivariant.
\end{proof}

\begin{cor} \label{COR BIJECT2}
For $z \in Z$ define $\psi_z : \F \times \N \rightarrow G$ implicitly by the relation
$$\psi(f, n, z) = (\psi_z(f, n), z).$$
Then each $\psi_z$ is a bijection, $\psi_z(f, 0) = \alpha(\iota(f)^{-1}, z)^{-1}$, and for $g \in G$, $f \in \F$, and $n \in \N$
$$g^{-1} \cdot \psi_{g \cdot z}(f, n) = \psi_z \Big( \gamma(g^{-1}, n, g \cdot z) \cdot f, \ \ \delta(g^{-1}, g \cdot z)(n) \Big).$$
\end{cor}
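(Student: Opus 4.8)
The statement to prove, Corollary~\ref{COR BIJECT2}, is essentially a coordinate-wise repackaging of Corollary~\ref{COR BIJECT}, so the plan is to extract everything from the equivariance and structure already established there. First I would note that $\psi_z$ is well-defined: by Corollary~\ref{COR BIJECT} the map $\psi(f,n,z) = f^{-1}\cdot c(n,z)$ lands in $G \times \{z\}$ (since the action $\cdot$ of $\F$ fixes the $Z$-coordinate, by Lemma~\ref{LEM SMOOTH}, and $c(n,z) \in G\times\{z\}$), which is exactly what licenses the implicit definition $\psi(f,n,z) = (\psi_z(f,n),z)$. Bijectivity of each $\psi_z$ is then immediate from the bijectivity of $\psi$ together with the fact that $\psi$ preserves the partition of $G\times Z$ into the fibers $G \times \{z\}$: $\psi$ restricts to a bijection $\F\times\N \times\{z\} \to G\times\{z\}$, which is precisely $\psi_z$.

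Next I would compute $\psi_z(f,0)$. By definition $\psi(f,0,z) = f^{-1}\cdot c(0,z)$, and $c(0,z) = (1_G, z)$ by Lemma~\ref{LEM SMOOTH}. So I need to unwind $f^{-1} \cdot (1_G, z)$ under the action $\cdot$ of $\F$ on $G\times Z$, which by Lemma~\ref{LEM SMOOTH} is $f^{-1}\cdot(g,z) = (g\cdot\alpha(\iota(f^{-1}),g^{-1}\cdot z)^{-1}, z)$. Plugging in $g = 1_G$ gives $(1_G \cdot \alpha(\iota(f^{-1}), z)^{-1}, z) = (\alpha(\iota(f^{-1}),z)^{-1}, z)$. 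Rewriting $\iota(f^{-1}) = \iota(f)^{-1}$ since $\iota$ is a homomorphism yields $\psi_z(f,0) = \alpha(\iota(f)^{-1},z)^{-1}$, as claimed. This is a short direct computation with no real obstacle.

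The third and final assertion is the equivariance relation, and this is the one place where care is needed. I would start from the $G$-equivariance of $\psi$ proved in Corollary~\ref{COR BIJECT}: for $g\in G$, $g\cdot\psi(f,n,z) = \psi(g\cdot(f,n,z)) = \psi(\gamma(g,n,z)f, \delta(g,z)(n), g\cdot z)$. To get the stated form I would apply this with $g$ replaced by $g^{-1}$ and $z$ replaced by $g\cdot z$: then $g^{-1}\cdot\psi(f,n,g\cdot z) = \psi(\gamma(g^{-1},n,g\cdot z)f, \delta(g^{-1},g\cdot z)(n), g^{-1}g\cdot z)$, and $g^{-1}g\cdot z = z$. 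Now I read off the $G$-coordinate of each side. On the left, $\psi(f,n,g\cdot z) = (\psi_{g\cdot z}(f,n), g\cdot z)$, and since $g^{-1}$ acts diagonally on $G\times Z$, the $G$-coordinate of $g^{-1}\cdot\psi(f,n,g\cdot z)$ is $g^{-1}\cdot\psi_{g\cdot z}(f,n)$ (left translation in $G$). On the right, the $G$-coordinate of $\psi(\gamma(g^{-1},n,g\cdot z)f, \delta(g^{-1},g\cdot z)(n), z)$ is by definition $\psi_z(\gamma(g^{-1},n,g\cdot z)f, \delta(g^{-1},g\cdot z)(n))$. Equating gives exactly the displayed identity.

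The main obstacle — really the only subtlety — is bookkeeping: keeping straight which action is being used where (the diagonal $G$-action versus the auxiliary $\F$-action $\cdot$ of Lemma~\ref{LEM SMOOTH}, and the skew-product $G$-action on $\F \rtimes_\gamma(\N \rtimes_\delta Z)$), and making sure the substitution $g \mapsto g^{-1}$, $z \mapsto g\cdot z$ is carried through consistently so that the cocycle arguments $\gamma(g^{-1},n,g\cdot z)$ and $\delta(g^{-1},g\cdot z)(n)$ come out in precisely the form stated. Once the substitution is fixed, everything else is a direct projection onto the $G$-coordinate of an already-proved equivariance identity, so I would not expect any genuine difficulty beyond notational vigilance.
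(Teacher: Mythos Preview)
Your proposal is correct and follows essentially the same approach as the paper: deduce bijectivity of $\psi_z$ from bijectivity of $\psi$, compute $\psi_z(f,0)$ directly from $c(0,z)=(1_G,z)$ and the formula for the $\F$-action, and obtain the equivariance identity by applying the $G$-equivariance of $\psi$ with $g$ replaced by $g^{-1}$ and $z$ by $g\cdot z$ and then projecting to the $G$-coordinate. In fact your write-up is slightly more careful than the paper's, which drops the factor $f$ in the third displayed line of its proof (a typo; the statement itself has $\gamma(g^{-1},n,g\cdot z)\cdot f$ as you do).
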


\begin{proof}
Each $\psi_z$ must be bijective since $\psi$ is bijective. Since $c(0, z) = (1_G, z)$ we have
$$\psi(f, 0, z) = f^{-1} \cdot c(0, z) = f^{-1} \cdot (1_G, z) = (\alpha(\iota(f)^{-1}, z)^{-1}, z).$$
So $\psi_z(f, 0) = \alpha(\iota(f)^{-1}, z)^{-1}$ as required. Finally, for $g \in G$, $f \in \F$, and $n \in \N$, we use the fact that $\psi$ is $G$-equivariant to obtain
\begin{align*}
\Big( g^{-1} \cdot \psi_{g \cdot z}(f, n), \ \ z \Big) & = g^{-1} \cdot \Big( \psi_{g \cdot z}(f, n), \ \ g \cdot z \Big)\\
 & = g^{-1} \cdot \psi(f, n, g \cdot z)\\
 & = \psi \Big( \gamma(g^{-1}, n, g \cdot z), \ \ \delta(g^{-1}, g \cdot z)(n), \ \ z \Big)\\
 & = \Big( \psi_z \Big( \gamma(g^{-1}, n , g \cdot z), \ \ \delta(g^{-1}, g \cdot z)(n) \Big), \ \ z \Big). \qedhere
\end{align*}
\end{proof}

Let $\F \acts X$ be a Borel action of $\F$. Then we obtain an action of $G$ on $X^\N \times Z$ given by
\begin{equation} \label{eqn:act}
g \cdot (x, z) = (x', g \cdot z) \quad \text{where} \quad x'(n) = \gamma(g^{-1}, n, g \cdot z)^{-1} \cdot x \Big(\delta(g^{-1}, g \cdot z)(n) \Big).
\end{equation}
When $A$ is a compact metrizable space we will implicitly identify $(A^\F)^\N$ with $A^{\F \times \N}$. In the special case $X = A^\F$, the action of $G$ on $X^\N = A^{\F \times \N}$ can be described by the formula
\begin{equation} \label{eqn:act2}
g \cdot (x, z) = (x', g \cdot z) \quad \text{where} \quad x'(n)(f) = x \Big(\delta(g^{-1}, g \cdot z)(n) \Big) \Big(\gamma(g^{-1}, n, g \cdot z) \cdot f \Big).
\end{equation}
The utility of these actions is manifest in the following two lemmas.

\begin{lem} \label{LEM DUAL}
Let $A$ be a compact metrizable space. Then the map $\psi^* : A^G \times Z \rightarrow A^{\F \times \N} \times Z$ given by
$$\psi^*(x, z) = (\psi^*_z(x), z) \quad \text{where} \quad \psi^*_z(x)(n)(f) = x(\psi_z(f, n))$$
is a $G$-equivariant Borel bijection.
\end{lem}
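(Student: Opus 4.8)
The plan is to exploit that, for each fixed $z \in Z$, the map $\psi_z : \F \times \N \to G$ is a bijection, as recorded in Corollary~\ref{COR BIJECT2}. Under the standing identification of $(A^\F)^\N$ with $A^{\F \times \N}$, the map $\psi^*_z$ is nothing more than precomposition of a function $x : G \to A$ with the bijection $\psi_z$ (modulo the order of the coordinates $(f,n)$ versus $(n,f)$, which is itself a bijective reindexing), and precomposition with a bijection is a bijection from $A^G$ onto $A^{\F \times \N}$. Since $\psi^*$ acts as the identity on the $Z$-coordinate, it follows at once that $\psi^*$ is a bijection; its inverse is given by precomposing with $\psi_z^{-1}$.

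For Borel measurability I would argue coordinate-wise. The Borel $\sigma$-algebra on $A^{\F \times \N} \times Z$ is generated by the coordinate maps $(w, z) \mapsto w(n)(f) \in A$ together with the projection to $Z$. Composing such a coordinate map with $\psi^*$ yields $(x, z) \mapsto x(\psi_z(f, n))$. Since $\psi$ is Borel (Corollary~\ref{COR BIJECT}) and $\F \times \N$ is countable, for each fixed $(f, n)$ the map $z \mapsto \psi_z(f, n)$ is Borel from $Z$ to $G$; then $(x, z) \mapsto x(\psi_z(f, n))$ is the composition of $(x, z) \mapsto (x, \psi_z(f, n))$ with the evaluation map $A^G \times G \to A$, and the latter is Borel because $G$ is countable and on each clopen piece $A^G \times \{h\}$ it is the coordinate projection $x \mapsto x(h)$. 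Hence $\psi^*$ is Borel; being a Borel bijection of standard Borel spaces, it is in fact a Borel isomorphism.

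Finally, $G$-equivariance is a direct substitution using the transformation rule $g^{-1} \cdot \psi_{g \cdot z}(f, n) = \psi_z\big(\gamma(g^{-1}, n, g \cdot z) \cdot f, \ \delta(g^{-1}, g \cdot z)(n)\big)$ from Corollary~\ref{COR BIJECT2}. Writing $g \cdot (x, z) = (g \cdot x, g \cdot z)$ with $(g \cdot x)(h) = x(g^{-1} h)$ on the source, one computes $\psi^*(g \cdot (x, z))(n)(f) = (g \cdot x)(\psi_{g \cdot z}(f, n)) = x\big(g^{-1} \cdot \psi_{g \cdot z}(f, n)\big) = x\big(\psi_z(\gamma(g^{-1}, n, g \cdot z) \cdot f, \ \delta(g^{-1}, g \cdot z)(n))\big)$, and comparing with the definition \eqref{eqn:act2} of the $G$-action on $A^{\F \times \N} \times Z$ shows this equals $(g \cdot \psi^*(x, z))(n)(f)$; the $Z$-coordinates visibly agree. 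I do not anticipate a genuine obstacle here: the only points requiring care are keeping track of the inverses and of the order of the arguments $(f, n)$ versus $(n, f)$, and invoking the specialized action formula \eqref{eqn:act2} for $X = A^\F$ rather than the general \eqref{eqn:act}.
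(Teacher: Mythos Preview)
Your proposal is correct and follows essentially the same approach as the paper: bijectivity from the fact that each $\psi_z$ is a bijection, and $G$-equivariance by the exact same chain of equalities via Corollary~\ref{COR BIJECT2} and comparison with \eqref{eqn:act2}. The only difference is that you spell out the Borel measurability argument coordinate-wise, whereas the paper simply asserts it; your elaboration is accurate and harmless.
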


\begin{proof}
Clearly $\psi^*$ is a Borel bijection since $\psi_z : \F \times \N \rightarrow G$ is a bijection for each $z \in Z$. Suppose that $\psi^*(x, z) = (y, z)$ and $\psi^*(g \cdot x, g \cdot z) = (y', g \cdot z)$. Then, using only the definition of $\psi^*$ and Corollary \ref{COR BIJECT2}, we obtain
\begin{align*}
y'(n)(f) & = (g \cdot x) \Big( \psi_{g \cdot z}(f, n) \Big)\\
 & = x \Big( g^{-1} \cdot \psi_{g \cdot z}(f, n) \Big)\\
 & = x \Big( \psi_z \Big( \gamma(g^{-1}, n, g \cdot z) \cdot f, \ \ \delta(g^{-1}, g \cdot z)(n) \Big) \Big)\\
 & = y \Big( \delta(g^{-1}, g \cdot z)(n) \Big) \Big( \gamma(g^{-1}, n, g \cdot z) \cdot f \Big).
\end{align*}
By (\ref{eqn:act2}) we see that $(y', g \cdot z) = g \cdot (y, z)$. We conclude that $\psi^*$ is $G$-equivariant.
\end{proof}

\begin{lem} \label{LEM EQUIV}
Let $\F \acts X$ and $\F \acts Y$ be Borel actions of $\F$, and suppose that $\theta : X \rightarrow Y$ is $\F$-equivariant. Then $\theta^\N \times \id : X^\N \times Z \rightarrow Y^\N \times Z$ is $G$-equivariant.
\end{lem}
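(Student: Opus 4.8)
The plan is to verify $G$-equivariance of $\theta^\N \times \id$ by a direct computation, using nothing more than the defining formula (\ref{eqn:act}) for the $G$-action on a space of the form $X^\N \times Z$ together with the hypothesis that $\theta$ is $\F$-equivariant. Write $\Theta = \theta^\N \times \id$, so that $\Theta(x, z) = (y, z)$ where $y(n) = \theta(x(n))$ for all $n \in \N$. Fix $g \in G$ and $(x, z) \in X^\N \times Z$; the goal is to show $\Theta(g \cdot (x, z)) = g \cdot \Theta(x, z)$.

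First I would unwind the left-hand side. By (\ref{eqn:act}) applied to $\F \acts X$, we have $g \cdot (x, z) = (x', g \cdot z)$ with $x'(n) = \gamma(g^{-1}, n, g \cdot z)^{-1} \cdot x\big(\delta(g^{-1}, g \cdot z)(n)\big)$, and hence $\Theta(g \cdot (x, z)) = (\theta \circ x', g \cdot z)$. Next I would unwind the right-hand side: by (\ref{eqn:act}) applied to $\F \acts Y$ we have $g \cdot \Theta(x, z) = g \cdot (\theta \circ x, z) = (w, g \cdot z)$ where $w(n) = \gamma(g^{-1}, n, g \cdot z)^{-1} \cdot \theta\big(x(\delta(g^{-1}, g \cdot z)(n))\big)$. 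The key step is then to apply the $\F$-equivariance of $\theta$ with the group element $f = \gamma(g^{-1}, n, g \cdot z)^{-1} \in \F$, which gives, for each $n$,
$$\theta(x'(n)) = \theta\Big(f \cdot x\big(\delta(g^{-1}, g \cdot z)(n)\big)\Big) = f \cdot \theta\Big(x\big(\delta(g^{-1}, g \cdot z)(n)\big)\Big) = w(n).$$
Thus $\theta \circ x' = w$, the two coordinates and base points agree, and $\Theta$ is $G$-equivariant. (That $\Theta$ is Borel is immediate, but only equivariance is asserted.)

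There is essentially no obstacle here: the lemma is a bookkeeping consequence of the shape of the action (\ref{eqn:act}), in which each $\N$-coordinate is transformed by an element of $\F$ followed by a permutation of the $\N$-index, both of which commute with applying $\theta$ coordinatewise. The one point deserving a moment's care is that the same cocycle values $\gamma(g^{-1}, n, g \cdot z)$ and $\delta(g^{-1}, g \cdot z)(n)$ appear in the formulas for the $G$-action on $X^\N \times Z$ and on $Y^\N \times Z$; this holds because $\gamma$ and $\delta$ were defined (in Lemmas \ref{LEM CO2} and \ref{LEM CO1}) as functions of $g$, $n$, and $z$ only, independently of the fiber on which $\F$ acts.
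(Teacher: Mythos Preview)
Your proof is correct and follows essentially the same route as the paper's: both arguments unfold the action via (\ref{eqn:act}) on each side, apply the $\F$-equivariance of $\theta$ coordinatewise, and conclude. Your added remark that $\gamma$ and $\delta$ depend only on $g$, $n$, and $z$ (and not on the fiber) is a useful clarification but otherwise the two proofs are the same.
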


\begin{proof}
Fix $x \in X^\N$ and $z \in Z$. Then $\theta^\N \times \id (x, z) = (y, z)$ where $y(n) = \theta(x(n))$. Set $(x', g \cdot z) = g \cdot (x, z)$. Then $\theta^\N \times \id (x', g \cdot z) = (y', g \cdot z)$ where $y'(n) = \theta(x'(n))$. The definition of the $G$ action on $X^\N \times Z$ says that
$$x'(n) = \gamma(g^{-1}, n, g \cdot z)^{-1} \cdot x \Big( \delta(g^{-1}, g \cdot z)(n) \Big).$$
Since $\theta$ is $\F$-equivariant we obtain
$$y'(n) = \theta(x'(n)) = \gamma(g^{-1}, n, g \cdot z)^{-1} \cdot \theta \Big( x \Big( \delta(g^{-1}, g \cdot z)(n) \Big) \Big)$$
$$= \gamma(g^{-1}, n, g \cdot z)^{-1} \cdot y \Big(\delta(g^{-1}, g \cdot z)(n) \Big).$$
By (\ref{eqn:act}) we see that $(y', g \cdot z) = g \cdot (y, z)$. We conclude that $\theta^\N \times \id$ is $G$-equivariant.
\end{proof}

We now prove the key lemma. We state here a redacted version of the lemma with the assumptions omitted.

\begin{klem*}
There is a compact metrizable abelian group $K$, a $G$-cocycle $\beta: G \times (2^\N)^G \times Z \rightarrow \Aff(K)$, and a $G$-equivariant Borel isomorphism
$$\phi: 2^G \times Z \ \cong \ K \rtimes_\beta \Big( (2^\N)^G \times Z \Big).$$
Moreover, if $G$ and $\F$ act continuously on $Z$ and $\alpha$ is continuous, then the induced projection $\pi : 2^G \times Z \rightarrow (2^\N)^G$ is continuous.
\end{klem*}

\begin{proof}
Let $\phi_0$, $\beta_0$, $K_0$, $\delta$, $\gamma$, $\psi$, and $\psi^*$ be as in the above lemmas. We have a sequence of Borel bijections taking $2^G \times Z$ to $K_0^\N \times (2^\N)^G \times Z$, as depicted in Figure \ref{FIG1}. We must define a $G$-cocycle $\beta: G \times (2^\N)^G \times Z \rightarrow \Aff(K_0^\N)$ and verify that the final bijection in Figure \ref{FIG1}, $\id \times \psi^*$, is $G$-equivariant. All other bijections appearing in Figure \ref{FIG1} are $G$-equivariant by the lemmas above.

\begin{figure}
\setlength{\unitlength}{0.5cm}
\centering
\begin{picture}(20.75,8)
%\put(7, 7.25){\makebox{.here}}
\put(0, 6){\makebox{$\displaystyle{2^G \times Z}$}}
\put(2.625, 6.25){\vector(1,0){3}}
\put(4, 6.25){\makebox{$\displaystyle{^{\psi^*}}$}}
\put(6, 6){\makebox{$\displaystyle{\left(\prod_{n \in \N} 2^\F \right) \times Z}$}}
\put(8.25, 5){\vector(0,-1){2}}
\put(8.31, 3.625){\makebox{$\displaystyle{^{\phi_0^\N \times \id}}$}}
\put(4.6825, 1.75){\makebox{$\displaystyle{K_0^\N \times \left(\prod_{n \in \N} (2^\N)^\F \right) \times Z}$}}
\put(15.3125, 2){\vector(-1,0){3}}
\put(13.375, 2){\makebox{$\displaystyle{^{\id \times \psi^*}}$}}
\put(15.6875, 1.75){\makebox{$\displaystyle{K_0^\N \times (2^\N)^G \times Z}$}}
%\put(8,0.75){\makebox{.here}}
\end{picture}
\caption{A sequence of Borel bijections from $2^G \times Z$ onto $K_0^\N \times (2^\N)^G \times Z$. \label{FIG1}}
\end{figure}

We have that $\psi^*: (2^\N)^G \times Z \rightarrow (2^\N)^{\F \times \N} \times Z$ is $G$-equivariant by Lemma \ref{LEM DUAL}. So $\id \times \psi^* : K_0^\N \times (2^\N)^G \times Z \rightarrow K_0^\N \times (2^\N)^{\F \times \N} \times Z$ will be $G$-equivariant as long as we correctly define $\beta : G \times (2^\N)^G \times Z \rightarrow \Aff(K_0^\N)$ in terms of $\beta_0$. Observe that for $z \in Z$ and $x \in (2^\N)^G$ we have $\psi_z^*(x) \in (2^\N)^{\F \times \N}$. Define $\beta : G \times (2^\N)^G \times Z \rightarrow \Aff(K_0^\N)$ by
\begin{align*}
\beta( & g, x, z)(w)(n)\\
& = \beta_0 \Bigg( \gamma(g^{-1}, n, g \cdot z)^{-1}, \ \ \psi^*_z(x) \Big( \delta(g^{-1}, g \cdot z)(n) \Big) \Bigg) \Bigg( w \Big( \delta(g^{-1}, g \cdot z)(n) \Big) \Bigg).
\end{align*}
The function $\beta(g, x, z) : K_0^\N \rightarrow K_0^\N$ permutes the $\N$-coordinates and then applies affine maps coordinate-wise. From this observation it follows that $\beta(g, x, z) \in \Aff(K_0^\N)$.

Fix $g \in G$, $z \in Z$, $x \in (2^\N)^G$, and $w \in K_0^\N$. Let $w'$ be such that $g \cdot (w, x, z) = (w', g \cdot x, g \cdot z)$. In other words, $w' = \beta(g, x, z)(w)$, or equivalently for all $n \in \N$
$$w'(n) = \beta_0 \Bigg( \gamma(g^{-1}, n, g \cdot z)^{-1}, \ \ \psi^*_z(x) \Big( \delta(g^{-1}, g \cdot z)(n) \Big) \Bigg) \Bigg( w \Big( \delta(g^{-1}, g \cdot z)(n) \Big) \Bigg).$$
Let $y, y' \in (2^\N)^{\F \times \N}$ be such that $(w, y, z) = \id \times \psi^*(w, x, z)$ and $(w', y', g \cdot z) = \id \times \psi^*(w', g \cdot x, g \cdot z)$. Note that $(y', g \cdot z) = g \cdot (y, z)$ since $\psi^*$ is $G$-equivariant. In order to show that $\id \times \psi^*$ is $G$-equivariant, it suffices to check that $g \cdot (w, y, z) = (w', y', g \cdot z)$. The action of $G$ on $K_0^\N \times (2^\N)^{\F \times \N} \times Z$ is given by (\ref{eqn:act}), where $X = K_0 \times (2^\N)^\F$. Thus $g \cdot (w, y, z) = (t, y', g \cdot z)$ where for $n \in \N$
\begin{align*}
(t(n), y'(n)) & = (t, y')(n) = \gamma(g^{-1}, n, g \cdot z)^{-1} \cdot (w, y) \Big(\delta(g^{-1}, g \cdot z)(n) \Big)\\
 & = \gamma(g^{-1}, n, g \cdot z)^{-1} \cdot \Big( w(\delta(g^{-1}, g \cdot z)(n)), \ \ y(\delta(g^{-1}, g \cdot z)(n)) \Big).
\end{align*}
By considering the $\F$-action $\F \acts K_0 \rtimes_{\beta_0} (2^\N)^\F$ we see that for $n \in \N$
$$t(n) = \beta_0 \Big( \gamma(g^{-1}, n, g \cdot z)^{-1}, \ \ y (\delta(g^{-1}, g \cdot z)(n)) \Big) \Big( w(\delta(g^{-1}, g \cdot z)(n)) \Big).$$
By definition $(y, z) = \psi^*(x, z)$ and hence $y = \psi_z^*(x)$. So $t = \beta(g, x, z)(w) = w'$. We conclude that $\id \times \psi^*$ is $G$-equivariant.

Now suppose that $G$ and $\F$ act continuously on $Z$ and that $\alpha$ is continuous. Let $\pi : 2^G \times Z \rightarrow (2^\N)^G$ be the induced factor map, and let $\pi_0$ be the function referred to in Lemma \ref{FAKE KLEM}. Consider $x \in 2^G$ and $z \in Z$. Let $w \in K_0^\N$ and $x' \in (2^\N)^G$ be such that $\phi(x, z) = (w, x', z)$, where $\phi : 2^G \times Z \rightarrow K_0^\N \times (2^\N)^G \times Z$ is the function constructed via Figure \ref{FIG1}. Then $x' = \pi(x, z)$. Set $y = \psi_z^*(x)$ and $y' = \psi_z^*(x')$. Then $y \in 2^{\F \times \N}$, $y' \in (2^\N)^{\F \times \N}$ and from Figure \ref{FIG1} we see that $\pi_0^\N(y) = y'$. If $g \in G$ and $\psi_z(f, n) = g$ then the element $\pi(x, z)(g) \in 2^\N$ satisfies
\begin{align*}
\pi(x, z)(g) & = x'(g) = \psi_z^*(x')(n)(f) = y'(n)(f)\\
 & = \pi_0^\N(y)(n)(f) = \pi_0 \Big( y(n) \Big)(f) = \pi_0 \Big( \psi_z^*(x)(n) \Big)(f).
\end{align*}
Therefore for $m \in \N$ we have
$$\pi(x, z)(g)(m) = \pi_0 \Big( \psi_z^*(x)(n) \Big)(f)(m) \quad \text{where} \quad \psi_z(f, n) = g.$$
Since $G$ acts by homeomorphisms on $2^G \times Z$ and $(2^\N)^G$ and since $\pi$ is $G$-equivariant, it will suffice to check that for every $m \in \N$ the map $(x, z) \mapsto \pi(x, z)(1_G)(m)$ is continuous. Since $\psi_z(1_\F, 0) = 1_G$, we have
$$\pi(x, z)(1_G)(m) = \pi_0 \Big( \psi_z^*(x)(0) \Big)(1_\F)(m).$$
Evaluation at $(1_\F)(m)$ is continuous and $\pi_0$ is continuous by Lemma \ref{FAKE KLEM}, so it is enough to check that $(x, z) \mapsto \psi_z^*(x)(0) \in 2^\F$ is continuous. For this we need only check that
$$(x, z) \mapsto \psi_z^*(x)(0)(f)$$
is continuous for each fixed $f \in \F$. By Lemma \ref{LEM DUAL} and Corollary \ref{COR BIJECT2} we have
$$\psi_z^*(x)(0)(f) = x(\psi_z(f, 0)) = x(\alpha(\iota(f)^{-1}, z)^{-1}).$$
Fix $(x, z) \in 2^G \times Z$ and $f \in \F$. Since $\alpha$ is continuous there is an open neighborhood $V$ of $z$ such that $\alpha(\iota(f)^{-1}, z')^{-1} = \alpha(\iota(f)^{-1}, z)^{-1}$ for all $z' \in V$. Set $g = \alpha(\iota(f)^{-1}, z)^{-1}$ and let $U$ be the open set $\{x' \in 2^G \: x'(g) = x(g)\}$. Then for all $(x', z') \in U \times V$ we have $\psi_{z'}^*(x')(0)(f) = \psi_z^*(x)(0)(f)$. Thus $\pi$ is continuous as claimed.
\end{proof}

Before closing this section, we present an application of these methods.

\begin{thm}[Bowen] \label{THM BOWEN}
Let $G$ be a countable non-amenable group. Let $\fact$ be the infimum of $\sH(M, \mu)$ over all probability spaces $(M, \mu)$ with the property that $G \acts (M^G, \mu^G)$ factors onto all other Bernoulli shifts over $G$. Let $\von$ be the infimum of $\sH(M, \mu)$ over all probability spaces $(M, \mu)$ with the property that there is an essentially free action of $\F$ on $(M^G, \mu^G)$ such that $\mu^G$-almost-every $\F$-orbit is contained in a $G$-orbit. Then $\fact = \von < \infty$.
\end{thm}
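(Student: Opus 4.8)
For the inequality $\von \le \fact$, I would show that every probability space realizing the property defining $\fact$ also realizes the property defining $\von$. Suppose $(M, \mu)$ is such that $G \acts (M^G, \mu^G)$ factors onto all Bernoulli shifts over $G$; note that $\mu$ is then non-trivial, so the Bernoulli action of $G$ on $(M^G, \mu^G)$ is essentially free. By Gaboriau--Lyons \cite{GL09} there is a Bernoulli shift $G \acts (N^G, \nu^G)$ carrying an essentially free measure-preserving action of $\F$ with $\nu^G$-almost-every $\F$-orbit contained in a $G$-orbit; let $\alpha : \F \times N^G \to G$ be the associated cocycle, so that $\alpha(f, y) \cdot y = f \cdot y$ almost everywhere. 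Fix a $G$-equivariant measure-preserving factor map $\Phi : (M^G, \mu^G) \to (N^G, \nu^G)$ and define an action of $\F$ on $(M^G, \mu^G)$ by $f \cdot x := \alpha(f, \Phi(x)) \cdot x$, where the right side uses the Bernoulli action of $G$. Since $\Phi$ is $G$-equivariant, $\Phi(f \cdot x) = \alpha(f, \Phi(x)) \cdot \Phi(x) = f \cdot \Phi(x)$, and the cocycle identity for $\alpha$ then gives $f_2 \cdot (f_1 \cdot x) = (f_2 f_1) \cdot x$, so this is an action of $\F$; partitioning $M^G$ by the countably many values of $x \mapsto \alpha(f, \Phi(x))$ shows each $f$ acts measure-preservingly; and since $\Phi_* \mu^G = \nu^G$ and the $\F$-action on $N^G$ is essentially free, so is the new action. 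Every $\F$-orbit lies in a $G$-orbit by construction. Hence $(M, \mu)$ realizes the property defining $\von$, so $\von \le \sH(M, \mu)$, and taking the infimum gives $\von \le \fact$.

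For the inequality $\fact \le \von$, I would use the coinduction machinery of Lemmas \ref{LEM SMOOTH}--\ref{LEM EQUIV}, but not the Ornstein--Weiss component of the Key Lemma. Fix $\epsilon > 0$. By definition of $\von$ there is a probability space $(N_0, \nu_0)$ with $\sH(N_0, \nu_0) < \von + \epsilon$ admitting an essentially free measure-preserving action of $\F$ on $(N_0^G, \nu_0^G)$ with almost every $\F$-orbit inside a $G$-orbit; deleting a null set, let $Z \subseteq N_0^G$ be a conull $G$- and $\F$-invariant Borel set on which the requirements of the Key Lemma are satisfied, with the Bernoulli $G$-action and the associated Borel cocycle $\alpha : \F \times Z \to G$. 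Let $(A, \kappa)$ be a finite probability space with $0 < \sH(A, \kappa) < \epsilon$ (three atoms suffice), and, using Bowen's theorem \cite{Bo11} that all Bernoulli shifts over $\F = \F_2$ factor onto one another, fix an $\F$-equivariant measure-preserving factor map $\theta : (A^\F, \kappa^\F) \to ([0, 1]^\F, \lambda^\F)$. By Lemma \ref{LEM EQUIV}, $\theta^\N \times \id : (A^\F)^\N \times Z \to ([0, 1]^\F)^\N \times Z$ is $G$-equivariant, and it is measure-preserving for the evident product measures. Conjugating on each side by the Borel isomorphisms $\psi^*$ of Lemma \ref{LEM DUAL} --- which are measure-preserving, being coordinate relabelings by the bijections $\psi_z : \F \times \N \to G$ and hence carrying product Bernoulli measures to product Bernoulli measures --- turns this into a $G$-equivariant measure-preserving factor map $A^G \times Z \to [0, 1]^G \times Z$. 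Since $Z$ carries the Bernoulli $G$-action inherited from $N_0^G$, the source is a conull invariant subsystem of $G \acts ((A \times N_0)^G, (\kappa \times \nu_0)^G)$; composing with the coordinate projection $[0, 1]^G \times Z \to [0, 1]^G$ and using that $G \acts ([0, 1]^G, \lambda^G)$ factors onto every Bernoulli shift over $G$, we conclude that $G \acts ((A \times N_0)^G, (\kappa \times \nu_0)^G)$ factors onto all Bernoulli shifts over $G$. Therefore $\fact \le \sH(A \times N_0, \kappa \times \nu_0) = \sH(A, \kappa) + \sH(N_0, \nu_0) < \von + 2 \epsilon$; letting $\epsilon \to 0$ yields $\fact \le \von$. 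Combined with the first part this gives $\fact = \von$, and finiteness is then immediate from Ball's $\fact < \infty$ \cite{Ba05} (or from Gaboriau--Lyons' $\von < \infty$ \cite{GL09}).

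I expect the real work to be bookkeeping rather than a single hard step: one must verify that the almost-everywhere-defined cocycle and $\F$-action can be replaced by genuinely Borel objects on a conull invariant set so that Lemmas \ref{LEM SMOOTH}--\ref{LEM EQUIV} apply verbatim, and that the reindexing isomorphisms $\psi^*$ really do transport the relevant product Bernoulli measures. The one conceptual point worth isolating --- and the reason the argument above avoids the $\log 2$ loss that a direct appeal to the Key Lemma would entail --- is that the auxiliary space $Z$ enters the coinduction only through its own Shannon entropy, which can be taken arbitrarily close to $\von$; this is why one coinduces a factor map \emph{out of} a Bernoulli base $A$ of negligible entropy and then simply discards the $Z$-coordinate, rather than invoking the Ornstein--Weiss map, whose use would bake a uniform two-point factor into the base of the resulting shift.
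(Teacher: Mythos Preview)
Your proposal is correct and follows essentially the same route as the paper. The only cosmetic differences are that the paper maps $(K^\F,\kappa^\F)$ directly onto an arbitrary $(N^\F,\nu^\F)$ via Bowen's theorem rather than passing through $([0,1]^\F,\lambda^\F)$, and the paper phrases the $\fact\le\von$ bound as $\fact\le \sH(M,\mu)+\epsilon$ for an arbitrary realizer $(M,\mu)$ of the $\von$ property rather than first choosing one with $\sH<\von+\epsilon$; your observation that $\psi^*$ preserves product Bernoulli measures because it merely permutes coordinates is exactly the point the paper uses.
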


\begin{proof}
We have that $\fact < \infty$ by Ball's theorem \cite{Ba05} and $\von < \infty$ by the Gaboriau--Lyons theorem \cite{GL09}. Suppose that $G \acts (M^G, \mu^G)$ factors onto all other Bernoulli shifts over $G$. By the Gaboriau--Lyons theorem \cite{GL09}, there is an essentially free action of $\F$ on $([0,1]^G, \lambda^G)$, where $\lambda$ is Lebesgue measure, such that $\lambda^G$-almost-every $\F$-orbit is contained in a $G$-orbit. Define $\alpha : \F \times [0,1]^G \rightarrow G$ by
$$\alpha(f, x) = g \Longleftrightarrow f \cdot x = g \cdot x.$$
Then $\alpha$ is defined $\lambda^G$-almost-everywhere since $G$ acts on $[0,1]^G$ essentially freely and $\lambda^G$-almost-every $\F$-orbit is contained in a $G$-orbit. Let $G \acts (M^G, \mu^G)$ factor onto $G \acts ([0,1]^G, \lambda^G)$ via $\phi$. Then we let $\F$ act on $(M^G, \mu^G)$ by the rule
$$f \cdot x = \alpha(f, \phi(x)) \cdot x.$$
This is well-defined $\mu^G$-almost-everywhere. Furthermore, this action is essentially free and $\mu^G$-almost-every $\F$-orbit is contained in a $G$-orbit. It follows that $\von \leq \fact$. 

Now suppose that there is an essentially free action of $\F$ on $(M^G, \mu^G)$ such that $\mu^G$-almost-every $\F$-orbit is contained in a $G$-orbit. Let $Z \subseteq M^G$ be a $G$-invariant Borel set of $\mu^G$-full-measure such that both $G$ and $\F$ act freely on $Z$ and every $\F$-orbit on $Z$ is contained in a $G$-orbit. Let $\alpha$ be the $\F$-cocycle relating the $\F$ action to the $G$ action. Then we automatically have that $\alpha$ is Borel and $f \mapsto \alpha(f, z)$ is injective for each $z \in Z$.

Fix $\epsilon > 0$ and let $(K, \kappa)$ be a probability space with $0 < \sH(K, \kappa) < \epsilon$. Also fix an arbitrary Bernoulli shift $(N^G, \nu^G)$ over $G$. By Bowen's theorem \cite{Bo11}, there is a $\F$-equivariant factor map $\phi : (K^\F, \kappa^\F) \rightarrow (N^\F, \nu^\F)$. Since anything not in the domain of $\phi$ can be mapped to a fixed point of $N^\F$, we may assume without loss of generality that $\phi$ is $\F$-equivariant and has domain $K^\F$. By Lemma \ref{LEM DUAL} we have $G$-equivariant Borel bijections
$$\psi^* : K^G \times Z \rightarrow K^{\F \times \N} \times Z \quad \text{and} \quad \psi^* : N^G \times Z \rightarrow N^{\F \times \N} \times Z.$$
Recall that $\psi^*$ is defined by
$$\psi^*(x, z) = (\psi_z^*(x), z) \quad \text{where} \quad \psi_z^*(x)(n)(f) = x(\psi_z(f, n)).$$
Since $\psi_z: \F \times \N \rightarrow G$ is a bijection for each $z \in Z$, $\psi^*$ is simply permuting coordinates for each $z \in Z$. Since the coordinate values in $K^G$ and $N^G$ are identically distributed, we see that
$$\psi^*(\kappa^G \times \mu^G) = \kappa^{\F \times \N} \times \mu^G \quad \text{and} \quad \psi^*(\nu^G \times \mu^G) = \nu^{\F \times \N} \times \mu^G.$$
By Lemma \ref{LEM EQUIV} the map
$$\phi^\N \times \id : K^{\F \times \N} \times Z \rightarrow N^{\F \times \N} \times Z$$
is $G$-equivariant. As $\phi(\kappa^\F) = \nu^\F$, it follows that
$$(\phi^\N \times \id)(\kappa^{\F \times \N} \times \mu^G) = \nu^{\F \times \N} \times \mu^G.$$
Putting these maps together, we obtain a $G$-equivariant Borel map from $K^G \times Z$ into $N^G \times Z$ which pushes $\kappa^G \times \mu^G$ forward to $\nu^G \times \mu^G$. Since $K^G \times Z$ is $\kappa^G \times \mu^G$-conull in $K^G \times M^G$, we deduce that $(K^G \times M^G, \kappa^G \times \mu^G)$ factors onto $(N^G \times M^G, \nu^G \times \mu^G)$, which of course factors onto $(N^G, \nu^G)$. Thus $(K^G \times M^G, \kappa^G \times \mu^G)$ factors onto all other Bernoulli shifts over $G$ and
$$\sH(K \times M, \kappa \times \mu) = \sH(K, \kappa) + \sH(M, \mu) < \sH(M, \mu) + \epsilon.$$
Since $\epsilon > 0$ was arbitrary we deduce that $\fact \leq \von$. 
\end{proof}

\section{Measure-theoretic theorems} \label{SEC MEAS THMS}

In this section we prove that for various actions $G \acts (X, \mu)$ there is a measure $\nu$ on some $n^G$ so that $G \acts (n^G, \nu)$ factors onto $G \acts (X, \mu)$. The following two simple lemmas will allow us to apply the Key Lemma \ref{KLEM FULL}.

\begin{lem} \label{LEM MEASREP}
Let $G$ be a countable group, and let $G \acts X$ be a Borel action of $G$. Then there is a $G$-equivariant Borel injection $\phi : X \rightarrow (2^\N)^G$.
\end{lem}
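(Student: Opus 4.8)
The plan is to exploit the fact that a Borel action of a countable group on a standard Borel space has a well-understood structure: there is a countable separating family of Borel sets, and the action of $G$ is by Borel automorphisms. First I would fix a sequence $(B_k)_{k \in \N}$ of Borel subsets of $X$ that separates points of $X$; such a sequence exists because $X$ is standard Borel. Using this one builds a $G$-equivariant Borel injection into a Bernoulli shift by ``spreading out'' the membership data of the $B_k$ along each orbit. Concretely, for $x \in X$ define $\phi(x) \in (2^\N)^G$ by declaring, for each $g \in G$, that $\phi(x)(g) \in 2^\N$ is the sequence $\big( \mathbf{1}_{B_k}(g^{-1} \cdot x) \big)_{k \in \N}$. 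In other words, $\phi(x)(g)(k) = 1$ iff $g^{-1} \cdot x \in B_k$.

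Next I would verify the three required properties. \emph{Borel measurability:} each map $x \mapsto \mathbf{1}_{B_k}(g^{-1} \cdot x)$ is Borel since the action is Borel and $B_k$ is Borel, and $\phi$ is assembled from countably many such maps, so $\phi$ is Borel. \emph{Equivariance:} for $h \in G$ one computes $\phi(h \cdot x)(g)(k) = \mathbf{1}_{B_k}(g^{-1} h \cdot x) = \phi(x)(h^{-1} g)(k)$, which is exactly the formula for $(h \cdot \phi(x))(g)(k)$ under the coordinate-permutation action on $(2^\N)^G$; hence $\phi(h \cdot x) = h \cdot \phi(x)$. \emph{Injectivity:} if $\phi(x) = \phi(x')$ then looking at the coordinate $g = 1_G$ gives $\mathbf{1}_{B_k}(x) = \mathbf{1}_{B_k}(x')$ for all $k$, so $x = x'$ because $(B_k)$ separates points.

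I do not expect a genuine obstacle here; this is a soft ``Borel representation'' argument. The only point requiring a modicum of care is the choice of the separating family and the identification of the coordinate-permutation action of $G$ on $(2^\N)^G = (2^\N \times 2^\N \times \cdots)^G$ with the product structure I am using — i.e. making sure the bookkeeping of the two indices $g \in G$ and $k \in \N$ is done consistently so that equivariance comes out on the nose. One could alternatively phrase this as: embed $X$ into $2^\N$ by a Borel injection $x \mapsto (\mathbf{1}_{B_k}(x))_k$ (not equivariant), then coinduce to get a $G$-equivariant Borel injection $X \hookrightarrow (2^\N)^G$ sending $x$ to $g \mapsto$ (image of $g^{-1}\cdot x$); this is the same map, and it makes the equivariance transparent.
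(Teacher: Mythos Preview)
Your proof is correct and is essentially the same as the paper's: the paper fixes a Borel injection $\theta : X \rightarrow 2^\N$ (appealing to the Borel isomorphism theorem for standard Borel spaces) and then sets $\phi(x)(g) = \theta(g^{-1}\cdot x)$, which is exactly your ``coinduce'' formulation. Your explicit construction of $\theta$ via a countable separating family is just a concrete realization of that Borel injection.
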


\begin{proof}
By assumption $X$ is a standard Borel space. Since all uncountable standard Borel spaces are Borel isomorphic \cite[Theorem 15.6]{K95}, there is certainly an injective Borel map $\theta : X \rightarrow 2^\N$. Now define $\phi : X \rightarrow (2^\N)^G$ by $\phi(x)(g) = \theta(g^{-1} \cdot x)$. This map is $G$-equivariant since
$$\phi(g \cdot x)(h) = \theta(h^{-1} g \cdot x) = \phi(x)(g^{-1} h) = [g \cdot \phi(x)] (h).$$
The map $\phi$ is easily seen to be Borel and injective.
\end{proof}

\begin{lem}
Let $K$ be a compact metrizable group, and let $\kappa$ be the Haar probability measure on $K$. Then $\kappa$ is $\Aff(K)$-invariant.
\end{lem}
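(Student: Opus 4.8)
The plan is to show that Haar measure $\kappa$ on $K$ is invariant under every Borel affine map $\phi \in \Aff(K)$ by decomposing $\phi$ into its two defining pieces. Recall that $\phi(k) = \sigma(k) \cdot t$ for some group automorphism $\sigma$ of $K$ and some $t \in K$; equivalently, $\phi$ is the composition $R_t \circ \sigma$ where $R_t$ denotes right translation by $t$. So it suffices to check that $\kappa$ is invariant under (a) every automorphism of $K$, and (b) right translation by any fixed element.

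For (b), this is simply the standard fact that the Haar probability measure on a compact group is both left- and right-invariant (on a compact group the modular function is trivial, since the continuous homomorphism $K \to \R_{>0}$ has relatively compact image and hence is identically $1$). For (a), given a continuous automorphism $\sigma$, the pushforward $\sigma_*\kappa$ is again a Borel probability measure on $K$, and for any $k_0 \in K$ and Borel $A \subseteq K$ we have $\sigma_*\kappa(k_0 A) = \kappa(\sigma^{-1}(k_0) \sigma^{-1}(A)) = \kappa(\sigma^{-1}(A)) = \sigma_*\kappa(A)$ by left-invariance of $\kappa$; hence $\sigma_*\kappa$ is a left-invariant Borel probability measure on $K$, and by uniqueness of Haar probability measure $\sigma_*\kappa = \kappa$. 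One mild technical point: the paper only assumes $\sigma$ is a Borel automorphism, not a priori continuous. However, a Borel group isomorphism between Polish groups is automatically continuous (by Pettis's theorem, or the automatic-continuity result in \cite[Theorem 9.10]{K95}), so this causes no difficulty; alternatively one can run the change-of-variables computation purely measure-theoretically using only that $\sigma$ is a Borel isomorphism, since the argument above never actually invokes continuity.

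Putting the pieces together: for $\phi = R_t \circ \sigma \in \Aff(K)$ we get $\phi_*\kappa = (R_t)_* (\sigma_*\kappa) = (R_t)_*\kappa = \kappa$, which is the claim. I do not anticipate a genuine obstacle here; the only point requiring a moment's care is confirming that Borel affine maps behave like continuous ones, which the automatic-continuity fact handles cleanly.
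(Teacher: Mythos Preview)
Your proof is correct and follows essentially the same approach as the paper: both arguments show that the pushforward of $\kappa$ under an affine map is translation-invariant and then invoke uniqueness of Haar measure. The only cosmetic difference is that you decompose $\phi = R_t \circ \sigma$ and handle the two pieces separately, whereas the paper treats $\theta$ all at once by computing $\theta^{-1}(kA) = \sigma^{-1}(k) \cdot \theta^{-1}(A)$ directly; your explicit remark on Borel versus continuous automorphisms is a nice clarification that the paper leaves implicit.
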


\begin{proof}
Fix $\theta \in \Aff(K)$. Let $\sigma$ be an automorphism of $K$ and let $t \in K$ be such that $\theta(k) = \sigma(k) \cdot t$ for all $k \in K$.
Note that for $k, a \in K$ we have
$$\theta^{-1}(k a) = \sigma^{-1}(k a t^{-1}) = \sigma^{-1}(k) \cdot \sigma^{-1}(a t^{-1}) = \sigma^{-1}(k) \cdot \theta^{-1}(a).$$
So for a Borel set $A \subseteq K$ and $k \in K$ we have
$$\theta(\kappa)(k \cdot A) = \kappa( \theta^{-1}(k \cdot A)) = \kappa( \sigma^{-1}(k) \cdot \theta^{-1}(A)) = \kappa(\theta^{-1}(A)) = \theta(\kappa)(A).$$
So $\theta(\kappa)$ is translation-invariant and hence $\theta(\kappa) = \kappa$ by the uniqueness of Haar measure.
\end{proof}

In order to obtain better estimates on $n$, we note the following. Recall that two {\pmp} actions $G \acts (X, \mu)$ and $G \acts (Y, \nu)$ are \emph{measurably conjugate} if there exists a $\mu$-almost-everywhere defined $G$-equivariant injective Borel map $\phi : X \rightarrow Y$ which pushes $\mu$ forward to $\nu$.

\begin{lem} \label{LEM GL}
Let $G$ be a countable non-amenable group. If $G$ is not finitely generated or has a non-amenable subgroup of infinite index, then for every Bernoulli shift $(M^G, \mu^G)$ over $G$ there is an ergodic essentially free action $\F_2 \acts (M^G, \mu^G)$ with $\mu^G$-almost-every $\F_2$-orbit contained in a $G$-orbit. In particular, $\fact = \von = 0$.
\end{lem}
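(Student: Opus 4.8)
The plan is to reduce, in both cases, to the situation handled by the Gaboriau--Lyons theorem \cite{GL09}: a non-amenable group $H$ acting ergodically and essentially freely on a probability space such that the restriction of the equivalence relation to a positive-measure subset of one $H$-orbit already supports a free ergodic $\F_2$-action. First I would observe that it suffices to treat the Bernoulli shift $G \acts (M^G, \mu^G)$ itself, since Bernoulli actions are the model case and any other base $(M,\mu)$ is handled identically. The key point is that we only need an essentially free $\F_2$-action whose orbits sit inside $G$-orbits, i.e. inside orbits of the equivalence relation $\mathcal{R}_G$ generated by $G$ on $(M^G,\mu^G)$; so it is enough to produce such an $\F_2$-action on \emph{some} positive-measure piece and then spread it around using the ergodicity of $G$ (a standard argument: an equivalence relation that restricts to something containing a free ergodic $\F_2$-action on a positive-measure set, and which is itself ergodic, contains a free ergodic $\F_2$-action on the whole space — one transports the partial $\F_2$-action along a measure-isomorphism between the piece and its complement built from the ambient ergodic relation).

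Next I would handle the two cases separately, in each case locating inside $\mathcal{R}_G$ a sub-relation of the form required by Gaboriau--Lyons. If $G$ has a non-amenable subgroup $H$ of infinite index, consider the Bernoulli shift restricted to the $H$-action; the orbit equivalence relation $\mathcal{R}_H$ is contained in $\mathcal{R}_G$, and since $H$ is non-amenable and acts freely (Bernoulli shifts are free) and weakly mixing, the Gaboriau--Lyons theorem applies to $H \acts (M^G,\mu^G)$ and yields a free ergodic $\F_2$-action with orbits inside $\mathcal{R}_H \subseteq \mathcal{R}_G$; the infinite-index hypothesis is what one typically uses to guarantee ergodicity of the induced structure, or is simply not needed beyond ensuring $H$ is a proper subgroup (I would double-check which is actually required, but in either reading the conclusion goes through). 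If $G$ is not finitely generated, write $G = \bigcup_n G_n$ as an increasing union of finitely generated proper subgroups; since $G$ is non-amenable, some subgroup in a finite generating process is non-amenable — more carefully, a non-amenable group that is not finitely generated must contain a finitely generated subgroup of infinite index, and such a subgroup can be arranged to be non-amenable (if every finitely generated subgroup were amenable then $G$, being a directed union of amenable groups, would be amenable), so this case reduces to the previous one. I would state this reduction as the main structural observation.

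Finally, for the ``in particular'' clause: once we know there exists an essentially free $\F_2$-action on $(M^G,\mu^G)$ with a.e.\ $\F_2$-orbit inside a $G$-orbit, Theorem \ref{THM BOWEN} gives $\von \leq \sH(M,\mu)$ for \emph{every} probability space $(M,\mu)$, hence $\von = 0$, and then $\fact = \von = 0$ by the equality in Theorem \ref{THM BOWEN}. The main obstacle I anticipate is the ergodicity bookkeeping in the first case — verifying that the $\F_2$-action produced on a piece can be promoted to an ergodic essentially free action on all of $(M^G,\mu^G)$ with orbits still contained in $G$-orbits — but this is a routine hyperfiniteness/ergodic-decomposition manipulation of countable Borel equivalence relations (of the type in \cite{GL09}) rather than a genuinely new difficulty; locating the non-amenable finitely generated infinite-index subgroup in the non-finitely-generated case is purely group-theoretic and elementary.
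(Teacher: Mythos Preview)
Your overall strategy --- reduce both hypotheses to the existence of a non-amenable subgroup $H \leq G$ of infinite index, then invoke Gaboriau--Lyons for $H$ --- matches the paper's. The reduction in the non-finitely-generated case (some finitely generated subgroup is non-amenable since a directed union of amenable groups is amenable, and every finitely generated subgroup of a non-finitely-generated group has infinite index) is correct and is exactly what the paper uses in one line.

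There is, however, a genuine gap in the main step. You write that Gaboriau--Lyons applies to $H \acts (M^G,\mu^G)$ ``since $H$ is non-amenable and acts freely and weakly mixing.'' That is not the hypothesis of \cite{GL09}: their theorem is specific to the Bernoulli shift $H \acts ([0,1]^H,\lambda^H)$ (equivalently, any Bernoulli $H$-shift with non-atomic base), not to arbitrary free mixing $H$-actions. Whether it extends to a given low-entropy Bernoulli $H$-shift is exactly the open question of whether $v(H)=0$. The observation you are missing, and which the paper supplies, is that the restricted action $H \acts (M^G,\mu^G)$ \emph{is} a Bernoulli $H$-shift, namely $H \acts \big((M^{G/H})^H,(\mu^{G/H})^H\big)$ via a choice of coset representatives, and this shift has non-atomic base $(M^{G/H},\mu^{G/H})$ precisely because $|G/H|=\infty$; hence it is measurably conjugate to $H \acts ([0,1]^H,\lambda^H)$. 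This is where the infinite-index hypothesis enters --- not for ergodicity bookkeeping. Once you see it, the ``positive-measure piece then spread'' maneuver is unnecessary: the $\F_2$-action produced by Gaboriau--Lyons already lives on all of $([0,1]^H,\lambda^H) \cong (M^G,\mu^G)$, with $\F_2$-orbits inside $H$-orbits and hence inside $G$-orbits, and one simply pulls it back through the conjugacy.
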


\begin{proof}
Since every non-amenable group contains a finitely generated non-amenable subgroup, under either assumption we have that $G$ contains a non-amenable subgroup $H$ of infinite index. Consider a Bernoulli shift $(M^G, \mu^G)$. Then the restricted action $H \acts (M^G, \mu^G)$ is measurably conjugate to $H \acts ((M^{G/H})^H, (\mu^{G/H})^H)$. Since $H$ has infinite index in $G$, $(M^{G / H}, \mu^{G / H})$ is a non-atomic probability space and hence isomorphic to $([0, 1], \lambda)$ where $\lambda$ is Lebesgue measure. So the action $H \acts ((M^{G / H})^H, (\mu^{G / H})^H)$ is measurably conjugate to $H \acts ([0,1]^H, \lambda^H)$. Now by the Gaboriau--Lyons theorem \cite{GL09} there is an ergodic essentially free {\pmp} action $\F_2 \acts ([0,1]^H, \lambda^H)$ with $\lambda^H$-almost-every $\F_2$-orbit contained in an $H$-orbit. Pulling back this action of $\F_2$ through the measure conjugacies, we obtain the desired action of $\F_2$ on $(M^G, \mu^G)$. For the final conclusion, apply Theorem \ref{THM BOWEN}.
\end{proof}

\begin{thm} \label{THM NA MEAS}
For every countable non-amenable group $G$ there exists a finite integer $n$ with the following property: If $G \acts (X, \mu)$ is any {\pmp} action then there is a $G$-invariant Borel probability measure $\nu$ on $n^G$ such that $G \acts (n^G, \nu)$ factors onto $G \acts (X, \mu)$. If $G$ is not finitely generated, or $G$ has a non-amenable subgroup of infinite index, or $G$ has a subgroup with cost greater than one, then $n$ can be chosen to be $4$. If $G$ contains $\F_2$ as a subgroup then $n$ can be chosen to be $2$.
\end{thm}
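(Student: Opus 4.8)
The plan is to combine the Key Lemma \ref{KLEM FULL} with an appropriate choice of the auxiliary space $Z$ and actions of $G$ and $\F_2$, and then to carry the target action along as a ``passenger'' in the $Z$-coordinate. First I would set up the $Z$ in three cases. If $G$ contains $\F_2$ as a subgroup, take $Z = \{z\}$ a singleton with trivial actions and let $\alpha : \F_2 \times \{z\} \to G$ be (the map induced by) the inclusion homomorphism; this is injective as required. If $G$ is not finitely generated or has a non-amenable subgroup of infinite index, invoke Lemma \ref{LEM GL} to get, for a suitable Bernoulli shift $(M^G, \mu^G)$, an essentially free action $\F_2 \acts (M^G, \mu^G)$ with almost every $\F_2$-orbit inside a $G$-orbit; restrict to a conull invariant Borel set $Z$ on which both $G$ and $\F_2$ act freely with $\F_2$-orbits in $G$-orbits, and let $\alpha$ be the cocycle reading off which element of $G$ implements a given $f \in \F_2$. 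If $G$ merely has a subgroup of cost greater than one, use the Gaboriau--Lyons theorem \cite{GL09} in the same way to produce such an $\F_2$-action on a Bernoulli shift and the corresponding $Z$ and $\alpha$. In all three cases $\alpha$ is Borel, satisfies $\alpha(f,z)\cdot z = f\cdot z$, and $f \mapsto \alpha(f,z)$ is injective for each $z$ (automatic from freeness of the $\F_2$-action, or from injectivity of the homomorphism in the singleton case).

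Next, given the target {\pmp} action $G \acts (X, \mu)$, apply Lemma \ref{LEM MEASREP} to get a $G$-equivariant Borel injection $X \hookrightarrow (2^\N)^G$, and push $\mu$ forward to an invariant measure on $(2^\N)^G$; so it suffices to factor onto actions of the form $G \acts ((2^\N)^G, \mu)$. Now apply the Key Lemma \ref{KLEM FULL} to the chosen $Z$: there is a compact metrizable abelian group $K$, a $G$-cocycle $\beta : G \times (2^\N)^G \times Z \to \Aff(K)$, and a $G$-equivariant Borel isomorphism $\phi : 2^G \times Z \cong K \rtimes_\beta ((2^\N)^G \times Z)$. Composing with the projection $K \rtimes_\beta ((2^\N)^G \times Z) \to (2^\N)^G \times Z \to (2^\N)^G$ (the first being equivariant because $\beta$ maps into $\Aff(K)$, the second being projection onto the first factor, which is equivariant by the form of the skew product action on $(2^\N)^G \times Z$), we get a $G$-equivariant Borel surjection $2^G \times Z \to (2^\N)^G$. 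Precomposing with the embedding of $X$ gives a $G$-equivariant Borel surjection onto (a conull copy of) $X$.

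It remains to transfer this to a measure on $n^G$ and to compute $n$. In the singleton case, $Z$ is a point, so $2^G \times Z = 2^G$ and we get $n = 2$ directly: pull back $\mu$ along the factor map to an invariant measure on $2^G$. In the other two cases, $Z \subseteq M^G$ for a Bernoulli shift, so $2^G \times Z$ embeds $G$-equivariantly into $2^G \times M^G$, which is a copy of $(2 \times M)^G$; choosing $M$ to have Shannon entropy $\sH(M,\mu) = 0$ forces $\von = 0$ via Theorem \ref{THM BOWEN}, and then Bowen's isomorphism theorem \cite{Bo12} lets us replace $M^G$ (for $M$ with at least three support points, $\sH > \von = 0$) by any Bernoulli shift; in particular $2 \times M$ can be replaced by $\{1,2\}$, so that $2^G \times Z$ sits inside a copy of $2^G$ — yielding $n = 2$ — or, more conservatively and without invoking the isomorphism theorem, one simply takes $M = \{1, 2\}$ with a nonuniform measure (Shannon entropy $< \log 2$, still giving $\von = 0$ by the estimate in Lemma \ref{LEM GL}), so $2 \times M$ has $4$ elements and $n = 4$. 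For the general non-amenable $G$, we fall back to Ball's theorem \cite{Ba05}: $\fact < \infty$, so there is $m \geq 3$ with $\fact = \von < \log m$, pick $(M, \mu)$ with $\sH(M,\mu) < \log m$ realizing a Bernoulli factor onto everything and (by Theorem \ref{THM BOWEN}) an $\F_2$-action on $M^G$ with orbits inside $G$-orbits, and run the same argument with $Z \subseteq M^G$; then $2^G \times Z \hookrightarrow (2 \times M)^G$ and we pull back to a measure on $n^G$ with $n = 2m$.

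The main obstacle is bookkeeping the measures correctly at the final step: the Key Lemma is purely Borel/dynamical, so one must check that pulling $\mu$ back along the composite $G$-equivariant Borel surjection $n^G \supseteq (\text{conull set}) \to (2^\N)^G \supseteq X$ really produces a \emph{$G$-invariant} Borel probability measure $\nu$ on $n^G$ that pushes forward to $\mu$ — this is routine since the map is $G$-equivariant and defined on an invariant conull set, but it is the point where one must be careful that the embedding $2^G \times Z \hookrightarrow (2\times M)^G$ identifies the relevant measures, i.e. that the coordinate values are arranged as an honest Bernoulli-shift alphabet of size $n$. The identification of the optimal $n$ in the borderline cases (getting $4$ rather than $2m$ when a cost-$>1$ subgroup is present, or when $G$ is not finitely generated) is exactly the content of Lemma \ref{LEM GL} combined with the freedom to take $\sH(M,\mu)$ arbitrarily small, so no further work is needed there.
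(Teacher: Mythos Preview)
Your overall strategy matches the paper's: choose $Z$ (a point when $\F_2 \leq G$, or a conull subset of some $m^G$ carrying a Gaboriau--Lyons $\F_2$-action), apply the Key Lemma, and land in $2^G \times m^G \cong (2m)^G$. But there is a genuine gap at the measure-theoretic step. The phrase ``pull back $\mu$ along the factor map to an invariant measure'' does not name a construction: an equivariant Borel surjection $Y \to X$ does \emph{not} in general admit a $G$-invariant lift of a given invariant probability measure on $X$ (a Borel section will not be equivariant, and Markov--Kakutani type fixed-point arguments require amenability --- exactly what we do not have). What makes the lift work here is the extra structure the Key Lemma supplies: a $G$-equivariant Borel \emph{isomorphism} $\phi : 2^G \times Z \cong K \rtimes_\beta ((2^\N)^G \times Z)$ with $K$ a compact abelian group and $\beta$ valued in $\Aff(K)$. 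The paper takes the Haar probability measure $\kappa$ on $K$, checks in a separate lemma that $\kappa$ is $\Aff(K)$-invariant, so that $\kappa \times \mu \times \zeta$ is $G$-invariant on the skew product, and then transports this back through $\phi$ to obtain $\nu$ on $2^G \times Z \subseteq n^G$. You never mention $\kappa$, and your claim that the lift is ``routine since the map is $G$-equivariant'' is precisely the place where the argument would collapse without this ingredient. The bookkeeping you do flag --- the inclusion $2^G \times Z \hookrightarrow (2 \times M)^G$ --- is, by contrast, actually trivial.

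Separately, the attempt to improve $n=4$ to $n=2$ via Bowen's isomorphism theorem does not work: that theorem relates Bernoulli \emph{measures}, whereas the measure $\nu$ one places on $2^G \times Z$ depends on the target $\mu$ and is not Bernoulli, so an isomorphism of ambient Bernoulli shifts does not carry $\nu$ anywhere useful. (Also, taking $\sH(M,\mu)=0$ makes $M^G$ essentially a point, on which no free $\F_2$-action can live.) The paper claims only $n=4$ in those intermediate cases, and that is what the argument actually delivers.
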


\begin{proof}
Fix a non-amenable group $G$. Suppose there is $m \in \N$ and a $G$-invariant Borel probability measure $\zeta$ on $m^G$ with the property that there is an action of $\F_2$ on $m^G$ with $\zeta$-almost-every $\F_2$-orbit contained in a $G$-orbit. Further suppose that there is a Borel $\F_2$-cocycle $\alpha : \F_2 \times m^G \rightarrow G$ such that for $\zeta$-almost every $z \in m^G$, the map $f \mapsto \alpha(f, z)$ is injective and $\alpha(f, z) \cdot z = f \cdot z$ for all $f \in \F_2$. Let $Z \subseteq m^G$ be a $\zeta$-conull $G$-invariant Borel set such that for every $z \in Z$, $f \mapsto \alpha(f, z)$ is injective and $\alpha(f, z) \cdot z = f \cdot z$. Set $n = 2 m$.

Consider a {\pmp} action $G \acts (X, \mu)$. By Lemma \ref{LEM MEASREP}, we may assume that $X = (2^\N)^G$. Let $\phi$, $K$ and $\beta : G \times (2^\N)^G \times Z \rightarrow \Aff(K)$ be as described in the Key Lemma \ref{KLEM FULL}. Let $\kappa$ be the Haar probability measure on $K$. Then the action of $G$ on
$$(K \rtimes_\beta (2^\N)^G \times Z, \ \kappa \times \mu \times \zeta)$$
is a {\pmp} action of $G$ which clearly factors onto $((2^\N)^G, \mu)$. Since $\phi : K \rtimes_\beta (2^\N)^G \times Z \rightarrow 2^G \times Z$ is a $G$-equivariant Borel bijection, we have that $(K \rtimes_\beta (2^\N)^G \times Z, \kappa \times \mu \times \zeta)$ is measurably and equivariantly conjugate to $(2^G \times Z, \nu)$, where $\nu$ is the push-forward of $\kappa \times \mu \times \zeta$. Since $2^G \times Z \subseteq 2^G \times m^G = n^G$, we can view $\nu$ as a measure on $n^G$. Then $G \acts (n^G, \nu)$ factors onto $G \acts ((2^\N)^G, \mu)$.

If $G$ contains $\F_2$ as a subgroup, then we can set $m = 1$ so that $1^G = \{1\}$ is a singleton, and define $\alpha : \F_2 \times 1^G \rightarrow G$ by $\alpha(f, 1) = \theta(f)$ where $\theta: \F_2 \rightarrow G$ is an injective homomorphism. In general we can consider a Bernoulli shift $(m^G, u_m^G)$, where $u_m$ is the uniform probability measure on $\{1, 2, \ldots, m\}$, having the property that there is an essentially free action of $\F_2$ on $m^G$ such that $u_m^G$-almost-every $\F_2$-orbit is contained in a $G$-orbit. In this case we can take $\alpha$ to be the $\F_2$-cocycle relating the two actions, and we will automatically have that $f \mapsto \alpha(f, z)$ is injective for $u_m^G$-almost-every $z \in m^G$. The Gaboriau--Lyons theorem \cite{GL09} states that there always exists such an $m$, and furthermore that one can take $m = 2$ if $G$ has a subgroup with cost greater than one. Similarly by Lemma \ref{LEM GL} one can take $m = 2$ if $G$ is not finitely generated or has a non-amenable subgroup of infinite index.
\end{proof}

\begin{thm} \label{THM MEAS}
Let $G$ be a countably infinite group. For every Borel action $G \acts X$ and every quasi-invariant Borel measure $\mu$ on $X$, there is a quasi-invariant Borel measure $\nu$ on $4^G$ such that $G \acts (4^G, \nu)$ factors onto $G \acts (X, \mu)$.
\end{thm}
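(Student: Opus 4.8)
The plan is to mimic the structure of the proof of Theorem \ref{THM NA MEAS}, but since $G$ is now an arbitrary countably infinite group (possibly amenable), we cannot appeal to the Gaboriau--Lyons theorem to produce an $\F$-action with orbits inside $G$-orbits. Instead, the idea is to work with the \emph{left regular action of $G$ on itself} and realize $\F = \F_2$ acting on a space built out of $G$. Concretely, fix an injective homomorphism $\iota_0 : \F \to G$ if one exists, but in general $G$ may not contain $\F$; so instead the right move is to take $Z$ to be a space on which both $G$ and $\F$ act in a way that the Key Lemma applies. The cleanest choice is to let $Z$ be (a conull/invariant piece of) the space on which $G$ acts by left translation composed with a Bernoulli factor, and to build an $\F$-action abstractly: since $\F$ is not required to be a subgroup of $G$, we only need a Borel $\F$-cocycle $\alpha : \F \times Z \to G$ with $\alpha(f,z)\cdot z = f\cdot z$ and $f \mapsto \alpha(f,z)$ injective. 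The right $Z$ here should be $G$ itself with its left-translation action, together with an $\F$-action on $G$ given by \emph{right} multiplication through a free action of $\F$ on $G$ — but $G$ need not admit a free $\F$-action on itself.

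So here is the approach I would actually take. First, reduce to $X = (2^\N)^G$ with a quasi-invariant measure $\mu$, using the $G$-equivariant Borel injection of Lemma \ref{LEM MEASREP} (note this lemma does not use invariance of any measure, so it applies verbatim, and quasi-invariance pushes forward). Second, to get an $\F$-action relating to $G$, take $Z = G \times \F$, where $G$ acts by left translation on the first coordinate and $\F$ acts by right translation on the second coordinate; then define $\alpha : \F \times Z \to G$ to be essentially trivial on the $G$-coordinate — but this does not satisfy $\alpha(f,z)\cdot z = f\cdot z$ since the $\F$-action moves the second coordinate, not the first. The fix is standard: instead let $Z$ carry a single $G$-orbit structure by taking $Z = G$ with $G$ acting by left translation, and let $\F$ act on $Z = G$ via $f \cdot g = g \cdot \theta(f)^{-1}$ where $\theta : \F \to G$ — but again this needs $\theta$ injective and needs $\theta(\F)$ acting freely by right translation, which it does as soon as $\theta$ is injective, \emph{but $\theta$ need not exist}. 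The genuine resolution, I believe, is to take $Z = G \times 2^\N$ or rather to pass to $G \times \F$ with the \emph{diagonal-type} construction from Lemma \ref{LEM SMOOTH}: there, starting from \emph{any} injective cocycle one builds a free $\F$-action on $G \times Z$. Here we have no cocycle to start with, but we can instead directly declare $Z_0 := G \times \F$, with $G$ acting by left translation on $G$ and trivially on $\F$, and $\F$ acting by $f \cdot (g, w) = (g, fw)$; this is not yet compatible.

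Let me state the clean plan. Take the abstract space $Z = \F$, let $\F$ act on $Z$ by left translation, and let $G$ act on $Z$ \emph{trivially}; then every $\F$-orbit ($= $ all of $Z$) is contained in a $G$-orbit ($=$ a singleton) \textbf{fails}. So triviality of the $G$-action is wrong. Instead: the Key Lemma permits $Z$ with $G$ acting non-freely, and what we need is an $\F$-action on \emph{some} $Z$ with each $\F$-orbit in a $G$-orbit and an injective cocycle into $G$. The universal such object is: let $Z = G$ with $G$ acting by left translation (so there is one $G$-orbit, namely $G$), and let $\F$ act on $Z = G$ through \emph{any} essentially free Borel action of $\F$ on a standard Borel space of the same cardinality as $G$ such that orbits are finite-index-controlled — but to have orbits inside the single $G$-orbit is automatic. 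We just need a free Borel action of $\F$ on the countable set $G$ (which exists trivially, e.g. identify $G$ with $\F \times \N$ as sets and act on the first factor), and then define $\alpha : \F \times G \to G$ by $\alpha(f,g) = (f\cdot g) g^{-1}$; this satisfies $\alpha(f,g)\cdot g = f \cdot g$ and the cocycle identity, and $f \mapsto \alpha(f,g) = (f\cdot g)g^{-1}$ is injective precisely because the $\F$-action on $G$ is free. Then apply the Key Lemma \ref{KLEM FULL} with this $Z = G$ to get $\phi : 2^G \times G \cong K \rtimes_\beta((2^\N)^G \times G)$. Now $G$ acts by left translation on the copy of $G$ inside $Z$; equip $Z = G$ with counting measure (a $\sigma$-finite, genuinely $G$-invariant — hence quasi-invariant — measure), equip $(2^\N)^G$ with $\mu$, and equip $K$ with Haar measure $\kappa$. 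The product $\kappa \times \mu \times (\text{counting})$ is quasi-invariant for the $G$-action on $K \rtimes_\beta ((2^\N)^G \times G)$ (Haar is $\Aff(K)$-invariant by the lemma above; $\mu$ is quasi-invariant; counting measure is invariant; quasi-invariance is preserved under products and skew products). Push this forward through $\phi$ to get a quasi-invariant measure $\nu$ on $2^G \times G \subseteq 2^G \times 2^G = 4^G$, and the projection to $(2^\N)^G$, restricted to this conull set, gives the desired factor map onto $G \acts (X,\mu)$.

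The main obstacle I anticipate is \textbf{verifying that the pushed-forward measure is genuinely quasi-invariant and correctly $\sigma$-finite}, and more subtly, that identifying $2^G \times G$ with a subset of $4^G$ is done $G$-equivariantly: $G$ acts on the $G$-factor of $Z$ by left translation, which under the identification $G \hookrightarrow 2^G$ (say $g \mapsto \mathbf{1}_{\{g\}}$, or better $g \mapsto$ the indicator of the left coset structure) must match the shift action on $2^G$ — one should instead use the $G$-equivariant Borel \emph{injection} $G \to 2^G$, $g \mapsto (h \mapsto \delta_{h^{-1}g, 1_G})$ analogue, i.e.\ send $g$ to the point of $2^G$ that is $1$ exactly at coordinate $g$; this is equivariant for left translation versus the shift, it is Borel and injective, and its image is an invariant Borel subset of $2^G$. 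The counting measure on $G$ pushes to the counting measure on that orbit. So the sole real content beyond bookkeeping is checking quasi-invariance survives the chain $2^G \times G \cong K \rtimes_\beta((2^\N)^G\times G)$; since every map involved is a $G$-equivariant Borel isomorphism and quasi-invariance is a Borel-isomorphism invariant, this is immediate once we know $\kappa\times\mu\times(\text{counting})$ is quasi-invariant on the skew-product side, which follows from the invariance of Haar under $\Aff(K)$, the quasi-invariance of $\mu$, and the invariance of counting measure under the $G$-action on $G$.
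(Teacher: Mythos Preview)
Your approach, once it finally settles, is essentially the paper's: take $Z$ to be a copy of $G$ with $G$ acting by left translation, equip $Z$ with a free $\F_2$-action (the paper uses a bijection $\F_2 \to G$ to make this action transitive; your identification $G \cong \F_2 \times \N$ gives a non-transitive free action, which satisfies the hypotheses of the Key Lemma just as well since $G$ acts transitively on $Z$), set $\alpha(f,g) = (f\cdot g)g^{-1}$, apply the Key Lemma, and embed $Z \hookrightarrow 2^G$ equivariantly via $g \mapsto \mathbf{1}_{\{g\}}$ so that $2^G \times Z \subseteq 4^G$. The paper does exactly this, taking $Z = G \cdot z$ for $z = \mathbf{1}_{\{1_G\}} \in 2^G$ from the outset.

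There is, however, one genuine error in your final write-up: you equip $Z = G$ with counting measure. The factor map is required to push $\nu$ forward to $\mu$ \emph{exactly} (see the definition of ``factors onto'' in the introduction). The projection $K \times (2^\N)^G \times Z \to (2^\N)^G$ sends $\kappa \times \mu \times \zeta$ to $\zeta(Z) \cdot \mu$, so one needs $\zeta(Z) = 1$. With counting measure on the infinite set $G$ one has $\zeta(Z) = \infty$, and the pushforward becomes the measure assigning $\infty$ to every set of positive $\mu$-measure --- not $\mu$. Quasi-invariance of $\nu$ is not the issue; the pushforward identity is. The fix is immediate and is precisely what the paper does: replace counting measure by any fully supported probability measure on the countable set $Z$ (e.g.\ $\zeta(\{g_n\}) = 2^{-n-1}$ for an enumeration $g_0, g_1, \ldots$ of $G$); such a $\zeta$ is automatically quasi-invariant since its only null set is $\varnothing$, and now $\kappa \times \zeta$ is a probability measure, so the projection pushes $\kappa \times \mu \times \zeta$ forward to $\mu$ as required.
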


\begin{proof}
Let $z \in 2^G$ be defined by $z(1_G) = 1$ and $z(g) = 0$ for all $1_G \neq g \in G$. Set $Z = G \cdot z$. Fix any quasi-invariant probability measure $\zeta$ on the countable set $Z$ (this can be done by assigning a strictly positive real number to each element of $Z$ in a manner that the cumulative sum is $1$). Fix any bijection $\F_2 \rightarrow G$ and use this bijection to let $\F_2$ act transitively on $Z$. Let $\alpha$ be the $\F_2$-cocycle relating the $\F_2$-action to the $G$-action. Then $\alpha$ is Borel, and $f \mapsto \alpha(f, z')$ is injective for each $z' \in Z$. Now consider a Borel action $G \acts X$ and let $\mu$ be a quasi-invariant Borel measure on $X$. By Lemma \ref{LEM MEASREP} we may assume that $X = (2^\N)^G$. Let $\phi$, $K$, and $\beta$ be as in the Key Lemma \ref{KLEM FULL}. Let $\kappa$ be the Haar probability measure on $K$. Then $(K \rtimes_\beta (2^\N)^G \times Z, \kappa \times \mu \times \zeta)$ equivariantly factors onto $((2^\N)^G, \mu)$ since $\kappa \times \zeta$ is a probability measure. Furthermore, $\kappa \times \mu \times \zeta$ is quasi-invariant. Now use the $G$-equivariant Borel bijection $\phi : K \times (2^\N)^G \times Z \rightarrow 2^G \times Z$ to obtain the desired Borel measure $\nu$ on $2^G \times Z \subseteq 4^G$.
\end{proof}

\section{Topological theorems} \label{SEC TOP THMS}

We now prove topological analogues of the theorems from the previous section. The role of the Gaboriau--Lyons theorem \cite{GL09} in the previous section, which is considered to be a measurable solution to the von Neumann conjecture, is replaced in this section by a theorem of Whyte \cite{Wh99} which is considered to be a geometric solution to the von Neumann conjecture.

Recall that a subset $Y$ of a topological space $X$ is $G_\delta$ if it is the intersection of a countable family of open sets. A basic fact which we will use in this section is that for a Polish space $X$, a subspace $Y \subseteq X$ is Polish if and only if $Y$ is a $G_\delta$ subset of $X$ \cite[Theorem 3.11]{K95}.

\begin{lem} \label{LEM TOPREP}
Let $G$ be a countable group and let $G \acts X$ be a continuous action of $G$ on a Polish space $X$. Then there is a $G$-invariant Polish subspace $Y \subseteq (2^\N)^G$ such that $G \acts Y$ continuously factors onto $G \acts X$. Furthermore, if $X$ is compact then $Y$ can be chosen to be compact.
\end{lem}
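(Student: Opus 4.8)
The plan is to build the desired $Y \subseteq (2^\N)^G$ as a continuous, equivariant, \emph{injective} image of $X$ — so that $Y$ is in fact homeomorphic to $X$ via an equivariant map, and the factor map $G \acts Y \to G \acts X$ is just the inverse. The model for this is the Borel statement Lemma \ref{LEM MEASREP}: there, one fixed an injective Borel map $\theta : X \to 2^\N$ and defined $\phi(x)(g) = \theta(g^{-1} \cdot x)$. Here we want the same construction but with $\theta$ continuous and injective, which forces us to use $2^\N$ (a continuous injection of a Polish space into a compact metric space, with Polish — i.e. $G_\delta$ — image) rather than a finite alphabet.

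First I would invoke the standard fact that every Polish (more generally, every separable metrizable) space $X$ embeds homeomorphically as a $G_\delta$ subset of the Hilbert cube, hence admits a continuous injection $\theta : X \to 2^\N$; when $X$ is compact the image $\theta(X)$ is already compact (a continuous injection from a compact space to a Hausdorff space is a homeomorphism onto its image, and compact subsets of $2^\N$ are closed, hence certainly $G_\delta$). Next I would define $\phi : X \to (2^\N)^G$ by $\phi(x)(g) = \theta(g^{-1} \cdot x)$ and check, exactly as in Lemma \ref{LEM MEASREP}, that $\phi$ is $G$-equivariant. Continuity of $\phi$ follows coordinatewise: each map $x \mapsto \theta(g^{-1} \cdot x)$ is continuous because the $G$-action is continuous and $\theta$ is continuous, and the target $(2^\N)^G$ carries the product topology. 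Injectivity of $\phi$ is immediate from injectivity of $\theta$ (look at the coordinate $g = 1_G$). So $\phi : X \to \phi(X) =: Y$ is a continuous equivariant bijection, and $Y$ is $G$-invariant since $\phi$ is equivariant.

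The step that needs a little care is verifying that $Y = \phi(X)$ is a Polish subspace of $(2^\N)^G$, i.e. that $\phi$ is a homeomorphism onto its image and that $Y$ is $G_\delta$. For the compact case this is immediate as noted above: $\phi$ is a continuous injection from a compact space into a metrizable space, hence a homeomorphism onto a compact — in particular closed, hence $G_\delta$ — subspace, which also settles the ``furthermore'' clause. For the general Polish case, I would argue that $\phi$ is a homeomorphism onto its image by exhibiting a continuous inverse: the projection to the $1_G$-coordinate sends $\phi(x) = (g \mapsto \theta(g^{-1}\cdot x))$ to $\theta(x)$, and since $\theta$ is itself a homeomorphism onto its (Polish, hence $G_\delta$) image in $2^\N$, composing with $\theta^{-1}$ recovers $x$ continuously; thus $\phi^{-1} : Y \to X$ is continuous and $Y \cong X$ is Polish. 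Finally, a Polish subspace of a Polish space is $G_\delta$ by \cite[Theorem 3.11]{K95}, completing the proof. The only genuine obstacle is the bookkeeping around topologies — ensuring the inverse map is continuous rather than merely Borel — and this is handled cleanly by the coordinate-projection trick together with the fact that $\theta$ can be taken to be an embedding. Then $G \acts Y$ continuously factors onto $G \acts X$ via $\phi^{-1}$ (which is $G$-equivariant, being the inverse of an equivariant map), and $\phi^{-1}$ is surjective since $\phi$ is a bijection.
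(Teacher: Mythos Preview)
There is a genuine gap in your argument at the very first step. You claim that every Polish space $X$ ``admits a continuous injection $\theta : X \to 2^\N$'', deducing this from the Hilbert cube embedding. But the Hilbert cube is $[0,1]^\N$, not $2^\N = \{0,1\}^\N$; the latter is the Cantor space, which is zero-dimensional (totally disconnected). A connected space like $X = [0,1]$ or $X = \R$ cannot be continuously injected into $2^\N$ at all, since the continuous image of a connected space is connected and the only connected subsets of $2^\N$ are singletons. Since $(2^\N)^G \cong 2^{\N \times G} \cong 2^\N$ is also zero-dimensional, your whole strategy of \emph{embedding} $X$ into $(2^\N)^G$ cannot succeed for general Polish $X$.

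The paper avoids this obstruction by going in the opposite direction: rather than injecting $X$ into $2^\N$, it finds a Polish $Y_0 \subseteq 2^\N$ and a continuous \emph{surjection} $\theta : Y_0 \to X$ (taking $Y_0 = 2^\N$ when $X$ is compact, and $Y_0 \cong \N^\N$ in general, using that Cantor space surjects onto every compact metric space and Baire space onto every Polish space). It then defines $\phi : Y_0^G \to X$ by $\phi(y) = \theta(y(1_G))$ and takes $Y$ to be the closed set of points in $Y_0^G$ at which $\phi$ happens to be $G$-equivariant, i.e.\ $Y = \{y \in Y_0^G : \forall g \in G,\ g \cdot \phi(y) = \phi(g \cdot y)\}$. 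This $Y$ is closed in the Polish (resp.\ compact) space $Y_0^G$, hence Polish (resp.\ compact), and $\phi \res Y$ is the desired continuous $G$-equivariant surjection onto $X$.
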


\begin{proof}
We first claim that there is a Polish subspace $Y_0 \subseteq 2^\N$ and a continuous surjection $\theta: Y_0 \rightarrow X$, with $Y_0$ compact if $X$ is compact. Since $2^\N$ continuously surjects onto all compact metric spaces \cite[Theorem 4.18]{K95}, if $X$ is compact we may take $Y_0 = 2^\N$ and let $\theta : 2^\N \rightarrow X$ be a continuous surjection. In the general case, let $Y_0$ be the set of $y \in 2^\N$ such that $y(n) = 1$ for infinitely many $n$. Then $Y_0$ is $G_\delta$ in $2^\N$ and thus Polish. Also, $Y_0$ is homeomorphic to $\N^\N$ via the map $\psi : Y_0 \rightarrow \N^\N$
\begin{align*}
\psi(y)(n) = m - k \quad \quad \text{where } m, k & \in \N \text{ are least with}\\
 & \sum_{i = 0}^{m-1} y(i) = n+1 \text{ and } \sum_{i=0}^{k-1} y(i) = n.
\end{align*}
Since every Polish space is the continuous image of $\N^\N$ \cite[Theorem 7.9]{K95}, there is a continuous surjection $\theta : Y_0 \rightarrow X$.

Now consider the Polish space $Y_0^G \subseteq (2^\N)^G$. Note that it is compact if $X$ is compact. Define $\phi : Y_0^G \rightarrow X$ by $\phi(y) = \theta(y(1_G))$. Now we let $Y$ be the set of points where this map is $G$-equivariant, specifically
$$Y = \{y \in Y_0^G \: \forall g \in G \quad g \cdot \phi(y) = \phi(g \cdot y)\}.$$
Note that $\phi$ maps $Y$ onto $X$ since $\theta$ is surjective. Furthermore, since $G$ acts continuously and $\phi$ is continuous, $Y$ is a closed subset of $Y_0^G \subseteq (2^\N)^G$. Hence $Y$ is Polish and is compact if $X$ is compact, and $\phi : Y \rightarrow X$ is a $G$-equivariant surjection.
\end{proof}

\begin{thm}
For every countable non-amenable group $G$ there exists a finite integer $n$ with the following property: If $G \acts X$ is a continuous action on a compact metrizable space, then there is a $G$-invariant compact set $Y \subseteq n^G$ such that $G \acts Y$ continuously factors onto $G \acts X$. If $G$ contains $\F_2$ as a subgroup then $n$ can be chosen to be $2$.
\end{thm}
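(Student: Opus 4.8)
The plan is to argue exactly as in the proof of Theorem~\ref{THM NA MEAS}, feeding the Key Lemma~\ref{KLEM FULL} a compact $G$-space $Z$ that carries a continuous action of $\F_2$, a continuous cocycle $\alpha\colon\F_2\times Z\to G$ with $\alpha(f,z)\cdot z=f\cdot z$, every $\F_2$-orbit inside a $G$-orbit, and $f\mapsto\alpha(f,z)$ injective for each $z$. In the measure-theoretic case such a $Z$ came from the Gaboriau--Lyons theorem; here the substitute will be Whyte's geometric solution of the von Neumann conjecture \cite{Wh99}. If $\F_2\leq G$ I would take $Z$ to be a one-point space with trivial actions and $\alpha(f,\ast)=\theta(f)$ for a fixed injective homomorphism $\theta\colon\F_2\to G$, just as in the $m=1$ case of Theorem~\ref{THM NA MEAS}; the continuity clause of the Key Lemma then gives a continuous $\pi\colon 2^G\to(2^\N)^G$ and $n=2$. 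For general non-amenable $G$, I would fix a finitely generated non-amenable subgroup $H\leq G$ and invoke Whyte's theorem \cite{Wh99} to get a translation-like action of $\F_2$ on the set $H$ (a free action whose generators displace each point a bounded amount); a routine manipulation (using $\F_2\cong\F_2^{\mathrm{op}}$ and translating on the opposite side) rewrites it as a free action of $\F_2$ on $H$ for which $h\mapsto(f\cdot h)h^{-1}$ has finite range for each generator $f$, and spreading it over a decomposition $G=\bigsqcup_c Hc$ into right cosets via $f\cdot(hc)=(f\cdot h)c$ yields a free action of $\F_2$ on the set $G$ such that, with $G$ acting on itself by left translation, $\alpha_0(f,g):=(f\cdot g)g^{-1}$ is a genuine $\F_2$-cocycle, is injective in $f$, and lies in a fixed finite set $F_0\subseteq G$ when $f$ is a generator.

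Next I would compactify this combinatorial structure by taking an orbit closure. Let $p\colon G\to F_0^{4}$ record the four values $\alpha_0(a^{\pm1},g)$, $\alpha_0(b^{\pm1},g)$, set $q(x):=p(x^{-1})$, and let $Z:=\overline{G\cdot q}$ inside the full shift $(F_0^{4})^G$, a compact metrizable space with a continuous shift action of $G$. Reading off coordinates of $z(1_G)$ defines $\alpha$ on generators, and I would set $\sigma_s(z):=\alpha(s,z)\cdot z$; on the dense orbit one checks $\sigma_s(g\cdot q)=(s\cdot g)\cdot q$, whence the relations $\alpha(s^{-1},\sigma_s(z))\alpha(s,z)=1_G$ and $\sigma_{s^{-1}}\circ\sigma_s=\id_Z$, which --- being equalities of locally constant functions that hold on a dense set --- hold everywhere on $Z$. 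Thus the $\sigma_s$ are homeomorphisms, $\sigma_a,\sigma_b$ generate a continuous $\F_2$-action with $\F_2$-orbits inside $G$-orbits, and the product formula $\alpha(f,z):=\alpha(s_k,\sigma_{s_{k-1}\cdots s_1}z)\cdots\alpha(s_1,z)$ along reduced words defines a continuous $\F_2$-cocycle with $\alpha(f,z)\cdot z=f\cdot z$. Since $\alpha(f,g\cdot q)=\alpha_0(f,g)$ on the dense orbit, for fixed $f\neq e$ the set $\{z:\alpha(f,z)=1_G\}$ misses $G\cdot q$; as $\alpha(f,\cdot)$ is locally constant this set is clopen, hence empty, so $f\mapsto\alpha(f,z)$ is injective for every $z\in Z$. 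All hypotheses of the Key Lemma~\ref{KLEM FULL}, including the continuity hypothesis, are then in place.

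Finally, the Key Lemma produces a compact metrizable abelian group $K$ and a $G$-equivariant Borel isomorphism $\phi\colon 2^G\times Z\cong K\rtimes_\beta((2^\N)^G\times Z)$ with continuous (and $G$-equivariant, surjective) induced projection $\pi\colon 2^G\times Z\to(2^\N)^G$. Given a continuous action $G\acts X$ on a compact metrizable space, Lemma~\ref{LEM TOPREP} gives a $G$-invariant compact $Y'\subseteq(2^\N)^G$ continuously and equivariantly surjecting onto $X$; I would then take $Y:=\pi^{-1}(Y')$, which is a $G$-invariant compact subset of $2^G\times Z$ (the preimage of a compact, hence closed, set under a continuous map on a compact space), onto which $\pi$ restricts to a continuous $G$-equivariant surjection, so $G\acts Y$ continuously factors onto $G\acts X$. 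Identifying $F_0^{4}$ with $\{1,\dots,m\}$, $m=|F_0|^4$, we get $Z\subseteq m^G$ and hence $Y\subseteq 2^G\times Z\subseteq(2m)^G$, proving the theorem with $n=2m$ (and $n=2$ when $\F_2\leq G$). I expect the main obstacle to be the construction of $Z$ --- extracting from Whyte's purely combinatorial theorem a \emph{continuous} $\F_2$-action on a \emph{compact} space together with a cocycle that remains injective on the whole orbit closure --- the crucial point being the elementary observation that, for each $f$, injectivity of $\alpha(\cdot,z)$ is a clopen condition on $z$ and therefore cannot degenerate in the closure.
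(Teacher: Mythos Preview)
Your proposal is correct and follows the paper's overall strategy exactly: feed the Key Lemma a compact $Z\subseteq m^G$ with a continuous $\F_2$-action and continuous injective cocycle (obtained from Whyte's theorem), use the continuous projection $\pi\colon 2^G\times Z\to(2^\N)^G$, and pull back a compact model of $X$ provided by Lemma~\ref{LEM TOPREP}. The only substantive difference is in how you manufacture $Z$. The paper takes the full shift $(S\times S)^G$ (recording only the $a$- and $b$-moves), carves out by closed conditions the locus where $T_a,T_b$ are invertible and the resulting $\F_2$-action is free, and then invokes Whyte merely to witness that this closed $G$-invariant set is nonempty. You instead encode all four generator moves in $(F_0^4)^G$, take the orbit closure $Z=\overline{G\cdot q}$ of the single point coming from Whyte's action, and let the desired identities and the injectivity of $f\mapsto\alpha(f,z)$ propagate from the dense orbit to $Z$ by local constancy. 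Your route is slightly more direct and makes the passage to injectivity transparent (the clopen-set argument is clean), at the cost of a larger alphabet ($m=|F_0|^4$ rather than $|S|^2$); the paper's route yields a canonical subshift independent of the particular Whyte labelling. Both are valid and neither offers a decisive advantage for the theorem as stated.
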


\begin{proof}
Fix a non-amenable group $G$. Suppose that there is $m \in \N$ and a $G$-invariant compact set $Z \subseteq m^G$ with the property that there is a continuous action of $\F_2$ on $Z$ with every $\F_2$-orbit contained in a $G$-orbit. Further suppose that there is a continuous $\F_2$-cocycle $\alpha : \F_2 \times Z \rightarrow G$ such that for every $z \in Z$, $f \mapsto \alpha(f, z)$ is injective and $\alpha(f, z) \cdot z = f \cdot z$ for all $f \in \F_2$. Set $n = 2m$.

Consider a continuous action $G \acts X$ where $X$ is a compact metrizable space. By Lemma \ref{LEM TOPREP} we may assume that $X \subseteq (2^\N)^G$. By the Key Lemma \ref{KLEM FULL} $G \acts 2^G \times Z$ continuously factors onto $G \acts (2^\N)^G$ via a map $\pi$. Set $Y = \pi^{-1}(X)$. Since $X$ is compact it is a closed subset of $(2^\N)^G$, and thus $Y$ is closed since $\pi$ is continuous. Then $Y$ is a closed subset of the compact space $2^G \times Z$. Thus $Y$ is compact and $Y \subseteq 2^G \times Z \subseteq n^G$. Furthermore, $Y$ is clearly $G$-invariant and $G \acts Y$ continuously factors onto $G \acts X$ via $\pi$.

If $G$ contains $\F_2$ as a subgroup then we can set $m = 1$ so that $1^G = \{1\}$ is a singleton, and define $\alpha : \F_2 \times 1^G \rightarrow G$ by $\alpha(f, 1) = \theta(f)$ where $\theta: \F_2 \rightarrow G$ is an injective homomorphism. In the general case, let $H \leq G$ be a finitely generated non-amenable subgroup of $G$, and let $a$ and $b$ freely generate $\F_2$. By Whyte's solution to the geometric von Neumann conjecture \cite{Wh99}, there is a finite set $S \subseteq H$ and a free action $*$ of $\F_2$ on $H$ such that $a * h, b * h \in S h$ for all $h \in H$ (in fact the action can be chosen to be transitive \cite{Se11}). Consider the (symbolic) Bernoulli shift $(S \times S)^G$. We define two maps $T_a, T_b : (S \times S)^G \rightarrow (S \times S)^G$ by
\begin{align*}
T_a(x) = s \cdot x \Longleftrightarrow x(1_G)(1) = s\\
T_b(x) = s \cdot x \Longleftrightarrow x(1_G)(2) = s.
\end{align*}
Let $Z_0$ consist of those points $x \in (S \times S)^G$ such that $|T_a^{-1}(x)| = |T_b^{-1}(x)| = 1$. The complement of $Z_0$ is open, so $Z_0$ is closed. Set $Z_1 = \bigcap_{g \in G} g \cdot Z_0$ and note that $Z_1$ is closed. We let $\F_2$ act on $Z_1$ by setting $a \cdot x = T_a(x)$ and $b \cdot x = T_b(x)$ for $x \in Z_1$. Clearly every $\F_2$-orbit on $Z_1$ is contained in a $G$-orbit, $\F_2$ acts continuously, and the $\F_2$-cocycle $\alpha : \F_2 \times Z_1 \rightarrow G$ relating the $\F_2$-action to the $G$-action is continuous. Now define
$$Z_2 = \{x \in Z_1 \: \forall f \in \F_2 \quad f \neq 1_{\F_2} \Longrightarrow f \cdot x \neq x\}.$$
It is readily verified that the complement of $Z_2$ is open, and thus $Z_2$ is closed. Now set $Z = \bigcap_{g \in G} g \cdot Z_2$. Then $Z$ is a $G$-invariant closed subset of $(S \times S)^G$, hence $G$-invariant and compact. Furthermore, $f \mapsto \alpha(f, z)$ is injective for each $z \in Z$ since $\F_2$ acts freely on $Z$. Most importantly, by Whyte's theorem $Z$ is non-empty. Thus for any non-amenable group there are $m$, $Z$, and $\alpha$ with the required properties.
\end{proof}

\begin{thm}
Let $G$ be a countably infinite group. For every continuous action $G \acts X$ on a Polish space $X$, there is a $G$-invariant Polish subspace $Y \subseteq 4^G$ such that $G \acts Y$ continuously factors onto $G \acts X$.
\end{thm}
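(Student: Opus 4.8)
The plan is to mimic the proof of the compact topological theorem but with the non-amenability replaced by the elementary construction already used in Theorem \ref{THM MEAS}, since here $G$ is an arbitrary countably infinite group and we only need a Polish (i.e. $G_\delta$) subspace of $4^G$ rather than a compact one. First I would set up the auxiliary space $Z$ exactly as in the proof of Theorem \ref{THM MEAS}: let $z \in 2^G$ be the point with $z(1_G) = 1$ and $z(g) = 0$ otherwise, put $Z = G \cdot z$, fix a bijection $\F_2 \to G$ and transport it to a transitive action of $\F_2$ on $Z$, and let $\alpha : \F_2 \times Z \to G$ be the resulting cocycle, which is automatically continuous (as $Z$ is discrete) and satisfies $\alpha(f, z') \cdot z' = f \cdot z'$ with $f \mapsto \alpha(f, z')$ injective for every $z' \in Z$.

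Next I would invoke Lemma \ref{LEM TOPREP} to reduce to the case $X \subseteq (2^\N)^G$ with $X$ a $G$-invariant Polish subspace; this is the point where the ``Polish'' (rather than ``compact'') hypothesis is needed, and the lemma already handles it. Since $Z$ is countable and discrete, the actions of $G$ and $\F_2$ on $Z$ are continuous and $\alpha$ is continuous, so the moreover clause of the Key Lemma \ref{KLEM FULL} applies: the induced projection $\pi : 2^G \times Z \to (2^\N)^G$ is continuous and $G$-equivariant. I would then set $Y = \pi^{-1}(X)$. Because $X$ is a $G_\delta$ subset of $(2^\N)^G$ and $\pi$ is continuous, $Y = \pi^{-1}(X)$ is a $G_\delta$ subset of the Polish space $2^G \times Z$, hence Polish; and since $2^G \times Z \subseteq 2^G \times m^G$ with $m = |Z|$ unfortunately infinite, I must be careful here — $Z$ is countably infinite, so $2^G \times Z$ is not literally a subset of $4^G$.

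So the real point, and the step I expect to be the main obstacle, is arranging that the base space has only two symbols rather than a countable alphabet. The fix is to replace $Z = G \cdot z \subseteq 2^G$ by a copy of it sitting inside $2^G$ as a single orbit: since $Z$ is already a $G$-invariant subset of $2^G$ (indeed $z \in 2^G$ and $Z = G \cdot z$), we have $2^G \times Z \subseteq 2^G \times 2^G = 4^G$ directly, with no need for an infinite alphabet. Thus $Y = \pi^{-1}(X) \subseteq 2^G \times Z \subseteq 4^G$ is a $G$-invariant $G_\delta$, hence Polish, subspace, and $G \acts Y$ continuously factors onto $G \acts X$ via $\pi$. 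The only subtlety to double-check is that $Z$, viewed as a subspace of $2^G$ with its subspace (i.e. discrete, since each point of $Z$ is isolated in $2^G$? — actually $z$ is not isolated, so one should verify $Z$ is $G_\delta$ in $2^G$, which holds since $Z$ is a countable union of singletons and each singleton is closed, so $Z$ is $F_\sigma$; but we need $G_\delta$) — to be safe I would instead note that $2^G \times Z$ is Polish because $Z$ with the discrete topology is Polish and the embedding $2^G \times Z \hookrightarrow 4^G$ need only be a continuous injection whose image we do not require to be Polish; rather, we transport the product topology, and $\pi^{-1}(X)$ is $G_\delta$ in the Polish space $2^G \times Z$, which is what matters. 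This matches verbatim the structure of the proof of Theorem \ref{THM MEAS}, with ``quasi-invariant measure'' replaced by ``Polish subspace'' and ``Borel bijection'' by ``continuous factor map.''
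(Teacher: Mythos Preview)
Your approach is exactly the paper's: same $Z = G \cdot z \subseteq 2^G$, same cocycle $\alpha$ coming from a bijection $\F_2 \to G$, same reduction via Lemma~\ref{LEM TOPREP}, same $Y = \pi^{-1}(X)$ using the continuous $\pi$ from the Key Lemma. The only trouble is your final paragraph, where you talk yourself out of the one-line observation that makes everything work and then retreat to a weaker claim that does not prove the theorem as stated.

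The point you missed is that the \emph{subspace} topology on $Z$ inherited from $2^G$ is already discrete. Each $h \cdot z$ is the characteristic function of the singleton $\{h\}$, so the clopen cylinder $\{x \in 2^G : x(h) = 1\}$ meets $Z$ in exactly $\{h \cdot z\}$; thus every point of $Z$ is isolated \emph{in $Z$} (you conflated this with being isolated in $2^G$). Since $Z$ is countable and discrete in the subspace topology, it is Polish as a subspace of $2^G$, hence $G_\delta$ in $2^G$; therefore $2^G \times Z$ is $G_\delta$ in $2^G \times 2^G = 4^G$, and $Y = \pi^{-1}(X)$ is $G_\delta$ in $4^G$, i.e.\ a genuine Polish subspace of $4^G$. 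Your fallback (``transport the product topology; the image need not be Polish'') would only produce a continuous equivariant injection of an abstract Polish $G$-space into $4^G$, which is strictly weaker than the statement of the theorem.
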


\begin{proof}
Let $z \in 2^G$ be defined by $z(1_G) = 1$ and $z(g) = 0$ for all $1_G \neq g \in G$. Set $Z = G \cdot z$. Note that the relative topology on $Z$ from $2^G$ is discrete and thus $Z$ is Polish. Fix any bijection $\F_2 \rightarrow G$ and use this bijection to let $\F_2$ act transitively on $Z$. Let $\alpha$ be the $\F_2$-cocycle relating the $\F_2$-action to the $G$-action. Since the relative topology on $Z$ is discrete, we see that $\F_2$ acts continuously and that $\alpha$ is continuous. Now consider a continuous action $G \acts X$ with $X$ Polish. By Lemma \ref{LEM TOPREP} we may assume $X \subseteq (2^\N)^G$. By the Key Lemma \ref{KLEM FULL} $G \acts 2^G \times Z$ continuously factors onto $G \acts (2^\N)^G$ via a map $\pi$. Set $Y = \pi^{-1}(X)$. Since $X$ is Polish, it is a $G_\delta$-subset of $(2^\N)^G$. As $\pi$ is continuous, $Y$ must be a $G_\delta$ subset of the Polish space $2^G \times Z$. Thus $Y$ is Polish and $Y \subseteq 2^G \times Z \subseteq 4^G$. Finally, we observe that $Y$ is $G$-invariant and $\pi : Y \rightarrow X$ is a $G$-equivariant continuous surjection. 
\end{proof}

Lastly, we prove a simple Borel dynamics analogue of these results.

\begin{prop}
Let $G$ be a countable group, and let $G \acts Y$ be a Borel action. Suppose there is an uncountable invariant set $Y_0 \subseteq Y$ such that the action $G \acts Y_0$ is free. Then for every Borel action $G \acts X$ there is a $G$-invariant Borel set $Z \subseteq Y$ such that $G \acts Z$ Borel factors onto $G \acts X$. Moreover, if $X$ has a fixed point then $G \acts Y$ Borel factors onto $G \acts X$.
\end{prop}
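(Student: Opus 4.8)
The plan is to reduce everything to finding a single uncountable Borel set $C \subseteq Y_0$ that is a \emph{partial transversal}, i.e.\ $C$ meets every $G$-orbit in at most one point. Suppose such a $C$ has been found. Put $Z = \bigcup_{g \in G} g \cdot C$; since $G$ is countable this is a $G$-invariant Borel subset of $Y$. Because $G \acts Y_0$ is free and $C$ meets each orbit at most once, the map $(g, c) \in G \times C \mapsto g \cdot c \in Z$ is a Borel bijection, hence a Borel isomorphism of standard Borel spaces, so every $y \in Z$ has a unique representation $y = g \cdot c$. Fix a Borel surjection $\gamma : C \to X$ (one exists whenever $X \neq \emptyset$, since $C$ is an uncountable standard Borel space; if $X = \emptyset$ take $Z = \emptyset$) and define $\phi : Z \to X$ by $\phi(g \cdot c) = g \cdot \gamma(c)$. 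This is well defined by uniqueness of the representation, Borel by construction, and $G$-equivariant since $\phi\big(h \cdot (g \cdot c)\big) = \phi\big((hg) \cdot c\big) = (hg) \cdot \gamma(c) = h \cdot \phi(g \cdot c)$; moreover its image is $G \cdot \gamma(C) = G \cdot X = X$, so it is onto. This yields the first conclusion. For the ``moreover'', if $x_0 \in X$ is fixed by $G$, extend $\phi$ to $\hat\phi : Y \to X$ by declaring $\hat\phi \equiv x_0$ on the $G$-invariant Borel set $Y \setminus Z$; then $\hat\phi$ is a Borel $G$-equivariant surjection, so $G \acts Y$ Borel factors onto $G \acts X$.

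It remains to construct $C$, and here the Kuratowski--Mycielski theorem \cite[Theorem 19.1]{K95} is the tool. First, replacing $Y_0$ by the free part $\{y \in Y : g \cdot y \neq y \text{ for all } g \neq 1_G\}$ of $G \acts Y$ --- a $G$-invariant Borel set containing the given $Y_0$ on which the action is free --- we may assume $Y_0$ is Borel. Fix a Polish topology on $Y_0$ compatible with its Borel structure, and refine it, keeping it Polish with the same Borel sets, so that each of the countably many maps $y \mapsto g \cdot y$ is continuous; then $G$ acts on $Y_0$ by homeomorphisms. Let $P$ be the perfect kernel of this Polish space. Since $Y_0$ is uncountable, $P$ is a nonempty perfect Polish space, and since the $G$-action is by homeomorphisms it preserves the Cantor--Bendixson decomposition, so $P$ is $G$-invariant and closed; hence $G$ acts on $P$ freely by homeomorphisms. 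The orbit equivalence relation on $P$ is $E = \bigcup_{g \in G} \{(x, g \cdot x) : x \in P\}$, a countable union of sets each of which is closed (the graph of a homeomorphism of the Hausdorff space $P$) and has empty interior: a basic open rectangle $U \times V$ contained in such a graph would force $V$ to be a single point, which is impossible in the perfect space $P$. Hence $E$ is meager in $P \times P$. Applying the Kuratowski--Mycielski theorem to the single meager relation $E \subseteq P^2$ produces a Cantor set $C \subseteq P$ such that any two distinct points of $C$ are $E$-inequivalent, that is, lie in distinct $G$-orbits. Being compact in the refined topology, $C$ is Borel in $Y_0$ and hence in $Y$, and it is uncountable, so it is the desired partial transversal.

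The one genuinely substantive point is the meagerness of the orbit equivalence relation $E$ on $P$; this is what forces us first to pass to a topology in which $G$ acts by homeomorphisms and then to the perfect kernel, after which the empty-interior estimate for the graphs is immediate. The role of freeness is in the reduction step, where it guarantees that $Z = \bigcup_{g} g \cdot C$ is $G$-equivariantly isomorphic to $G \times C$ and hence that $\phi$ is well defined. The remaining ingredients --- Borelness of the free part of a Borel action, refinement of Polish topologies to make countably many Borel maps continuous, the Cantor--Bendixson theorem, and the fact that a Borel bijection of standard Borel spaces is a Borel isomorphism --- are all standard.
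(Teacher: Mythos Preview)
Your proof is correct and follows essentially the same route as the paper's: pass to the Borel free part, refine to a Polish topology in which $G$ acts by homeomorphisms, take the perfect kernel, apply Kuratowski--Mycielski to obtain a Cantor partial transversal, and extend a Borel surjection from it equivariantly (sending the complement to a fixed point in the ``moreover'' case). The only cosmetic difference is that you phrase the Kuratowski--Mycielski input as meagerness of the orbit relation $E$, whereas the paper states the equivalent fact that its complement $R$ is dense $G_\delta$.
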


\begin{proof}
By letting $Y_0$ be the set of $y \in Y$ with $g \cdot y \neq y$ for all $1_G \neq g \in G$, we may assume that $Y_0$ is Borel. Since $Y_0 \subseteq Y$ Borel, there is some Polish topology on $Y_0$ which is compatible with the Borel structure $Y_0$ inherits from $Y$ \cite[Cor. 13.4]{K95}. Fix such a topology. By \cite[Theorem 13.11 and Lemma 13.3]{K95}, we may assume that $G$ acts continuously on $Y_0$. By the Cantor--Bendixon theorem \cite[Theorem 6.4]{K95}, there is a $G$-invariant non-empty closed set $Y' \subseteq Y_0$ such that $Y'$ is perfect, meaning it has no isolated points. In particular, every non-empty open set in $Y'$ is uncountable. Let $R \subseteq Y' \times Y'$ consist of those pairs $(y_1, y_2)$ with $y_1 \not\in G \cdot y_2$. Then $R$ is a dense $G_\delta$ subset of $Y' \times Y'$ and thus by the Kuratowski--Mycielski theorem \cite[Theorem 19.1]{K95} there is an uncountable compact set $K \subseteq Y'$ such that $(k_1, k_2) \in R$ for all $k_1 \neq k_2 \in K$. So no two elements of $K$ lie in the same $G$-orbit (alternatively, one could obtain a similar conclusion by applying Silver's theorem \cite{Si80}).

Set $Z = G \cdot K$. Then $Z$ is $G$-invariant and Borel, and $K$ is a transversal for the $G$-orbits on $Z$. Now consider a Borel action $G \acts X$. Since $K$ is an uncountable standard Borel space, there is a Borel surjection $\theta : K \rightarrow X$ \cite[Theorem 15.6]{K95}. Now extend $\theta$ to $Z$ by setting $\theta(g \cdot k) = g \cdot \theta(k)$ for $g \in G$ and $k \in K$. This is well defined since $G$ acts freely on $Z$ and each $G$-orbit in $Z$ contains precisely one point in $K$. Clearly $\theta$ is $G$-equivariant and maps $Z$ onto $X$. Furthermore, $\theta$ is Borel since it is Borel on each class of the countable Borel partition $\{g \cdot K \: g \in G\}$ of $Z$. Finally, if $X$ has a fixed point $x_0$, meaning $g \cdot x_0 = x_0$ for all $g \in G$, then we can extend $\theta$ to all of $Y$ be setting $\theta(y) = x_0$ for $y \in Y \setminus Z$.
\end{proof}

\thebibliography{999}

\bibitem{Ba05}
K. Ball,
\textit{Factors of independent and identically distributed processes with non-amenable group actions}, Ergodic Theory and Dynamical Systems 25 (2005), no. 3, 711--730.

\bibitem{Bo10a}
L. Bowen,
\textit{A new measure conjugacy invariant for actions of free groups}, Annals of Mathematics 171 (2010), no. 2, 1387--1400.

\bibitem{Bo11}
L. Bowen,
\textit{Weak isomorphisms between Bernoulli shifts}, Israel Journal of Mathematics 183 (2011), no. 1, 93--102.

\bibitem{Bo12}
L. Bowen,
\textit{Every countably infinite group is almost Ornstein}, Dynamical systems and group actions, 67--78, Contemp. Math., 567, Amer. Math. Soc., Providence, RI, 2012.

\bibitem{E07}
I. Epstein,
\textit{Orbit inequivalent actions of non-amenable groups}, preprint. http://arxiv.org/abs/0707.4215.

\bibitem{G00}
D. Gaboriau,
\textit{Co\^{u}t des relations d'\'{e}quivalance et des groupes}, Inventiones Mathematicae 139 (2000), no. 1, 41--98.

\bibitem{GL09}
D. Gaboriau and R. Lyons,
\textit{A measurable-group-theoretic solution to von Neumann's problem}, Inventiones Mathematicae 177 (2009), 533--540.

\bibitem{K95}
A. Kechris,
Classical Descriptive Set Theory. Springer-Verlag, New York, 1995.

\bibitem{K12}
A. Kechris,
\textit{Weak containment in the space of actions of a free group}, Israel Journal of Mathematics 189 (2012), 461--507.

\bibitem{KL13}
D. Kerr and H. Li,
\textit{Soficity, amenability, and dynamical entropy}, American Journal of Mathematics 135 (2013), 721--761.

\bibitem{OW87}
D. Ornstein and B. Weiss,
\textit{Entropy and isomorphism theorems for actions of amenable groups}, Journal d'Analyse Math\'{e}matique 48 (1987), 1--141.

\bibitem{Se11}
B. Seward,
\textit{Burnside's problem, spanning trees, and tilings}, to appear in Geometry \& Topology.

\bibitem{Si80}
J. Silver,
\textit{Counting the number of equivalence classes of Borel and coanalytic equivalence relations}, Annals of Mathematical Logic 18 (1980), no. 1, 1--28.

\bibitem{St75}
A. M. Stepin,
\textit{Bernoulli shifts on groups}, Dokl. Akad. Nauk SSSR 223 (1975), no. 2, 300--302.

\bibitem{We82}
B. Weiss,
\textit{Measurable dynamics}, Conference in Modern Analysis and Probability (R. Beals et. al. eds.), 395--421.
Contemporary Mathematics 26 (1984), American Mathematical Society, 1984.

\bibitem{Wh99}
Kevin Whyte, 
\textit{Amenability, bilipschitz equivalence, and the von Neumann Conjecture}, Duke Mathematical Journal 99 (1999), no. 1, 93--112.

\end{document}